\newcommand{\PreserveBackslash}[1]{\let\temp=\\#1\let\\=\temp}
\newcolumntype{C}[1]{>{\PreserveBackslash\centering}p{#1}}
\newcolumntype{R}[1]{>{\PreserveBackslash\raggedleft}p{#1}}
\newcolumntype{L}[1]{>{\PreserveBackslash\raggedright}p{#1}}
\DeclareMathOperator{\rank}{\ensuremath{rank}}
 \DeclareMathOperator{\sym}{\ensuremath{Sym}}
   \DeclareMathOperator{\kernel}{\ensuremath{ker}}
   \DeclareMathOperator{\cokernel}{\ensuremath{coker}}
   \DeclareMathOperator{\ext}{\ensuremath{Ext}}
   \DeclareMathOperator{\slope}{\ensuremath{slope}}
   \DeclareMathOperator{\maxslope}{\ensuremath{maxslope}}
\def\wbar{\accentset{{\cc@style\underline{\mskip8mu}}}}
\numberwithin{equation}{section}
\theoremstyle{plain}
\newtheorem{theorem}{Theorem}[section]
\newtheorem{defn} [theorem] {Definition}
\newtheorem{lemma} [theorem] {Lemma}
\newtheorem{remark}[theorem]{Remark}
\newtheorem{cor}[theorem]{Corollary}
\newtheorem{pro}[theorem]{Proposition}
\begin{document}
\bibliographystyle{unsrt}

\title{Algebraic  surfaces with $p_g=q=1, K^2=4$ and  nonhyperelliptic Albanese fibrations of genus 4}
\author{Songbo Ling}
\date{}
\renewcommand{\thefootnote}{\fnsymbol{footnote}}
\maketitle
 \footnotetext{This work  has been supported by the NSFC (No. 11471020).}

\begin{abstract}	
In this paper we  study minimal algebraic surfaces with $p_g=q=1,K^2=4$ and nonhyperelliptic Albanese  fibrations of genus 4.   We construct for the first time a family of such surfaces as complete intersections of   type $(2,3)$ in a $\mathbb{P}^3$-bundle over an elliptic curve.  For the surfaces we construct here, the direct image of the canonical sheaf under the Albanese map    is decomposable (which   is a topological invariant property). 

 Moreover we prove that, all minimal surfaces with $p_g=q=1,K^2=4$ and nonhyperelliptic Albanese  fibrations of genus 4 such    that  the direct image of the canonical sheaf under the Albanese map    is decomposable are contained in our family.   As a consequence, we show    that these  surfaces   
constitute a 4-dimensional  irreducible subset $\mathcal{M}$ of  $\mathcal{M}_{1,1}^{4,4}$, the Gieseker moduli space of minimal surfaces with $p_g=q=1, K^2=g=4$.  Moreover,   the closure of $\mathcal{M}$ is an irreducible component of       $\mathcal{M}_{1,1}^{4,4}$.
\end{abstract}
	
	\section{Introduction}
 Minimal surfaces of general type with $p_g=q=1$ have attracted the interest of  many authors (e.g. \cite{Cat81,CC91,CC93,CP06,Hor81,Pig09,Pol09,Rit1,Rit2}). For these surfaces, by an inequality of Bombieri (cf. \cite{Bom73} Lemma 14) and the Bogomolov-Miyaoka-Yau inequality   one has $2\leq K^2\leq 9$.  By results of Mo\v{\i}\v{s}ezon \cite{Moi65},  Kodaira \cite{Kod68} and Bombieri \cite{Bom73},   these surfaces belong to a finite number  of families.

For such a surface $S$,  the Albanese map $f:S\rightarrow B:=Alb(S)$   is a fibration. Since the genus $g$ of a general    fibre of $f$ (cf. \cite{CP06} Remark 1.1) and $K_S^2$ are    differentiable invariants,  surfaces with different $g$  or  $K^2$  belong to  different connected components of the Gieseker moduli space. Hence  we can write  the Gieseker moduli space of minimal surfaces of general type with $p_g=q=1$ $\mathcal{M}_{1,1}$ as a disjoint union of its components $\sqcup _{K^2,g}\mathcal{M}_{1,1}^{K^2,g}$, where $\mathcal{M}_{1,1}^{K^2,g}$ corresponding to minimal surfaces with $p_g=q=1$ and fixed $(K^2,g)$. 
 We call $\mathcal{M}_{1,1}^{K^2,g}$ the Gieseker moduli space of  minimal surfaces of general type  with $p_g=q=1$ and fixed $(K^2,g)$.

	The case  $K^2=2$ has been accomplished    by Catanese \cite{Cat81} and Horikawa \cite{Hor81} independently: these surfaces have $g=2$ and  the moduli space is irreducible;  the case  $K^2=3$ has been studied completely by Catanese-Ciliberto \cite{CC91,CC93} and Catanese-Pignatelli \cite{CP06}: these surfaces have $g=2$ or $g=3$. Moreover,  there are  three irreducible connected components for surfaces with $g=2$ and one  irreducible connected component for  surfaces with $g=3$.

\vspace{2ex}
In the case $K^2=4$,
  Ishida \cite{Ish05} proved  that  $g\leq 4$ if  the general Albanese fibre  is hyperelliptic.  Moreover,   he also constructed such  surfaces with $g=3,4$  (see \cite{Ish05}). For $g=2$, there are many examples (e.g. see Catanese \cite{Cat98}  Example 7,8 and Pignatelli \cite{Pig09}). In particular,  Pignatelli \cite{Pig09} found eight disjoint irreducible  components of the moduli space under the assumption that the direct image of the bicanonical sheaf under the Albanese map is a direct sum of three line bundles.

    However, if the general Albanese fibre is nonhyperelliptic, we only know that $g\leq 6$ if  the direct image of the   canonical sheaf under the Albanese map  is   indecomposable (see Remark \ref{T2 is 0 for genus bigger than 4} (ii)).  For the case  $g=3$  there are many known examples (see e.g. \cite{Ish05,Pol09,Rit1,Rit2}\cite{Ling43}). In particular, under the assumption that  direct image of the canonical sheaf under the Albanese map    is decomposable, the author \cite{Ling43} found  two irreducible components of the moduli space,  one of  dimension 5 and the other of dimension 4.    But  when  $g\geq 4$, as far as we know,  no example is known.

\vspace{2ex}
In this paper, we   study minimal algebraic  surfaces with  $p_g=q=1, K^2=4$ and genus 4 nonhyperelliptic Albanese fibrations.

In section 2, we   construct     for the first time   a family  of such surfaces. 
Motivated by the fact that  a canonical    curve of genus 4 is a complete intersection of type $(2,3)$ in $\mathbb{P}^3$, 
   we  take the complete intersection $S$  of a relative hyperquadric $Q$ and a relative hypercubic $X$ in a $\mathbb{P}^3$- bundle  over an elliptic curve $B$.
   We prove that (cf. Theorem \ref{existence}) for general choices of $Q$ and $X$, $S$ is a minimal surface of general type  with $p_g=q=1, K^2=4$ and a genus 4 nonhyperelliptic Albanese fibration. If we only require that $S$ has at most rational double points as singularities, then $S$ is a canonical surface with the required property (cf. Remark \ref{M not empty V1 is decomposable}).

We remark (cf. Remark \ref{M not empty V1 is decomposable}) that for all surfaces in our family, the direct image of canonical sheaf $V_1=f_*\omega_S$  is decomposable. By a result of Catanese-Ciliberto (cf. \cite{CC91} Theorems 1.2 and 1.4), the number of direct summands of $V_1$ is a topological invariant.  Hence   it is also  a deformation invariant. In particular, ``$V_1$ is decomposable" is a deformation invariant   condition.

One would  ask  naturally the following  question: if $S$ is a minimal surface with $p_g=q=1,K^2=4$ and a nonhyperelliptic genus 4 Albanese fibration such that   $V_1$ is decomposable, is $S$ necessarily contained in this family?  

In section 3 and section 4, by studying  the relative canonical algebra of the Albanese fibration, we show that this is true (cf. Theorem \ref{Sigma is contained in the family}).
In particular, we see that  under the assumption that ``$V_1$ is decomposable",   the  canonical model of $S$   is always a complete intersection of a relative hyperquadric and a relative hypercubic in $\mathbb{P}(V_1)$.   

As a consequence, we show that  minimal surfaces   with $p_g=q=1,K^2=4$ and a nonhyperelliptic genus 4 Albanese fibration such that   $V_1$ is decomposable give a 4-dimensional  irreducible subset $\mathcal{M}$ in $\mathcal{M}_{1,1}^{4,4}$. Since ``having a nonhyperelliptic Albanese fibration" is an open condition, the closure of $\mathcal{M}$ is an irreducible component of  $\mathcal{M}_{1,1}^{4,4}$.

 \vspace{2ex}
$\mathbf{Notation~and ~conventions.}$
Throughout this paper we work over the field $\mathbb{C}$ of complex numbers. Unless otherwise stated,  $S$ is  a minimal  surface of general type with $p_g=q=1$.

 We denote by $\Omega_S$  the sheaf of holomorphic 1-forms on $S$,   by    $T_S:=\mathcal{H}om_{\mathcal{O}_S}(\Omega_S,\mathcal{O}_S)$ the tangent sheaf of $S$ and       by $\omega_S:=\wedge^2\Omega_S$    the sheaf of holomorphic 2-forms on $S$. $K_S$ (or simply  $K$ if  no confusion)  is  the canonical divisor of $S$, i.e. $\omega_S\cong \mathcal{O}_S(K_S)$.   $p_g:=h^0(\omega_S), q:=h^0(\Omega_S)$.   The Albanese fibration of $S$ is denoted by  $f: S\rightarrow B:=Alb(S)$. We denote by $g$ the genus of a general fibre of $f$ (which is always called Albanese fibre)  and set $V_n:=f_*\omega_S^{\otimes n}$.

 For an elliptic curve $B$, we denote   by $E_p(r,d)$ ($p$ is a point on $B$) the unique indecomposable rank $r$ vector bundle over $B$ with determinant $\det(E_p(r,d))\cong  \mathcal{O}_B(d\cdot p)$ (see Atiyah   \cite{Ati57}).

 We denote by   $\mathcal{M}_{1,1}^{4,4}$    the Gieseker moduli space of   surfaces of general type  with $p_g=q=1,K^2=4$ and    genus 4 Albanese fibrations.
For divisors, we denote    linear equivalence by  `$\equiv$' and   numerical equivalence  by `$\sim_{num}$'.

\section {Constructing  the family}
In this section, we construct minimal surfaces with $p_g=q=1,K^2=g=4$ and  nonhyperelliptic Albanese fibrations as a complete intersection of a relative hyperquadric and a relative hypercubic in a $\mathbb{P}^3$-bundle over an elliptic curve.

Let $B$ be an elliptic curve. We fix a group structure of $B$,    denote by $0$ the  neutral element and by  $\eta$  a  torsion point of order 3 on $B$ (i.e. $\eta\neq  0,3\eta\equiv 3\cdot 0$).   Let $N:=\mathcal{O}_B(\eta-0)$  and let $V:=E_{[0]}(3,1)\oplus N$ be  the unique indecomposable  rank 3 vector bundle over $B$ with $\det(V)= \mathcal{O}_B(0)$.

Let $\pi: W:=\mathbb{P}(V) \rightarrow B$ be the  $\mathbb{P}^3$-bundle over $B$. Denote by   $T$ be the tautological divisor of $W$ (i.e. $\pi_*\mathcal{O}_W(T)=V$)  and by $H_p$   the fibre of $\pi$ over $p\in B$. The main result in this section is the following

\begin{theorem}
	\label{existence}
Take  a general member $Q \in |2T+H_\eta-H_0|$ and the unique effective divisor  $X \in |3T-H_\tau|$, where $\tau$ is a point on $B$ satisfying  $\tau\equiv  2\cdot 0-\eta$.   Then $S:=Q\cap X$
is a (smooth) minimal  surface with $p_g=q=1,K^2=g=4$ and a nonhyperelliptic Albanese fibration.
\end{theorem}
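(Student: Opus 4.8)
\emph{Overall strategy.} The plan is to push everything down to the elliptic curve $B$ via $\pi$, and to treat the ``genus $4$, nonhyperelliptic, Albanese'' part as formal once smoothness and the numerical invariants are in hand. Indeed, for general $p\in B$ the fibre $f^{-1}(p)=H_p\cap S$ sits in $H_p\cong\mathbb{P}^3$ as the intersection of the quadric $Q|_{H_p}$ (since $Q\equiv 2T+\text{pullback}$) and the cubic $X|_{H_p}$, so once $S$ is smooth a general fibre is a smooth complete intersection of type $(2,3)$ in $\mathbb{P}^3$, i.e.\ a canonically embedded --- hence nonhyperelliptic --- curve of genus $4$; thus $g=4$. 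The Koszul complex of $S\subset W$ will moreover give $f_*\mathcal{O}_S=\mathcal{O}_B$, so $f$ has connected fibres and $f^*\colon H^0(\Omega^1_B)\hookrightarrow H^0(\Omega^1_S)$; once $q(S)=1$ is established, this forces $f$ to be the Albanese map. So it remains to show: (i) $|3T-H_\tau|$ has a unique effective member $X$; (ii) a general $Q\in|2T+H_\eta-H_0|$ meets $X$ transversally in a smooth surface $S$; (iii) $\chi(\mathcal{O}_S)=q(S)=1$ (whence $p_g(S)=1$) and $K_S^2=4$; (iv) $S$ is minimal of general type.

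\emph{(i) and (ii) --- the main obstacle.} By the projection formula, $\pi_*\mathcal{O}_W(3T-H_\tau)=\mathrm{Sym}^3V\otimes\mathcal{O}_B(-\tau)$ and $\pi_*\mathcal{O}_W(2T+H_\eta-H_0)=\mathrm{Sym}^2V\otimes N$. Writing $V=A\oplus N$ with $A:=E_{[0]}(3,1)$ and using $\mathrm{Sym}^k(A\oplus N)=\bigoplus_j\mathrm{Sym}^jA\otimes N^{\otimes(k-j)}$, I would compute the cohomology of each summand from the facts that $A$ is stable of slope $1/3$ (so every $\mathrm{Sym}^jA$ is semistable, with $h^1=0$ and $h^0=\deg$ once the slope is positive, by Serre duality and Riemann--Roch on $B$) and, crucially, that $N^{\otimes 3}\cong\mathcal{O}_B$ because $3\eta\equiv 3\cdot 0$. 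The hypothesis $\tau\equiv 2\cdot 0-\eta$ is exactly what makes the relevant twist on $B$ carry a one-dimensional space of sections, giving $h^0(\mathcal{O}_W(3T-H_\tau))=1$, which is (i). For (ii) I expect $|2T+H_\eta-H_0|$ to have non-empty base locus $\mathcal{B}$: the summands $A\otimes N^{\otimes 2}$ and $N^{\otimes 3}$ of $\mathrm{Sym}^2V\otimes N$ are not globally generated, and their sections are fibrewise quadrics of rank $\le 2$ vanishing on the $\mathbb{P}^2$-subbundle $\mathbb{P}(A)\subset W$, so $\mathcal{B}\subseteq\mathbb{P}(A)$ and only the sections from $\mathrm{Sym}^2A\otimes N$ cut it down further; a dimension count should make $\mathcal{B}$ at most a curve, lying over finitely many points of $B$. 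One then has to check that $\mathcal{B}\cap X$ is finite, that at each such point a general $Q$ is smooth and meets $X$ transversally (a first-order Bertini argument on the jets of $|2T+H_\eta-H_0|$), and that a general $Q$ avoids $\mathrm{Sing}(X)$; away from $\mathcal{B}\cup\mathrm{Sing}(X)$ the ordinary Bertini theorem applied to the restriction of the system to $X$ yields smoothness of $S=Q\cap X$. This base-locus and transversality analysis is where I expect the real work to lie.

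\emph{(iii) and (iv).} From the Koszul resolution
\[0\to\mathcal{O}_W(-Q-X)\to\mathcal{O}_W(-Q)\oplus\mathcal{O}_W(-X)\to\mathcal{O}_W\to\mathcal{O}_S\to 0,\]
together with $R^i\pi_*\mathcal{O}_W(kT)=0$ for all $i$ when $-4<k<0$, the relative duality isomorphism $R^3\pi_*\bigl(\mathcal{O}_W(-5T)\otimes\pi^*\mathfrak{l}\bigr)\cong(V\otimes\det V)^\vee\otimes\mathfrak{l}$, the Leray spectral sequence for $\pi$, and $\chi(B,\mathcal{F})=\deg\mathcal{F}$, I would compute $\chi(\mathcal{O}_S)=1$ and the groups $H^i(W,\mathcal{I}_S)$ in the needed range; the outcome is $H^1(\mathcal{O}_S)\cong H^1(\mathcal{O}_W)=H^1(B,\mathcal{O}_B)=\mathbb{C}$, so $q(S)=1$, and then $p_g(S)=\chi(\mathcal{O}_S)-1+q(S)=1$. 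By adjunction $K_S=(K_W+Q+X)|_S$; since $K_W=-4T+\pi^*\det V$ with $\det V\cong\det A\otimes N$, the conditions $3\eta\equiv 3\cdot 0$ and $\tau\equiv 2\cdot 0-\eta$ make the pullback term $\det V\otimes N\otimes\mathcal{O}_B(-\tau)\cong\mathcal{O}_B(3\eta-3\cdot 0)\cong\mathcal{O}_B$, so $K_W+Q+X\equiv T$ and $\omega_S=\mathcal{O}_W(T)|_S$. Then $K_S^2=(K_W+Q+X)^2\cdot Q\cdot X$, computed in $W$ as $T^2\cdot 2T\cdot(3T-H_0)=6\,T^4-2\,T^3H_0=6-2=4$, using $\pi_*T^3=1$ and $\deg\pi_*T^4=\deg c_1(V)=1$. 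Finally $V=E_{[0]}(3,1)\oplus N$ is nef --- $E_{[0]}(3,1)$ is stable of positive degree, hence ample, and $\deg N=0$ --- so $T$ is nef on $W$ and therefore $K_S=\mathcal{O}_W(T)|_S$ is nef; hence $S$ is minimal, and being minimal with $K_S^2=4>0$ it is of general type. Together with the first paragraph this proves the theorem.
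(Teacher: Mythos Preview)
Your handling of (iii), (iv), and the ``formal'' parts is correct and in places slicker than the paper: pushing the Koszul complex forward gives $f_*\mathcal{O}_S=\mathcal{O}_B$ and connectedness directly (the paper argues via Stein factorization and a degree count), the observation that a general fibre is a canonically embedded $(2,3)$ curve is the cleanest route to nonhyperellipticity, and your nefness argument for $T$ via nefness of $V$ is fine.

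The gap is in (ii), and it is a missing idea rather than a wrong strategy. Your expectation that $|2T+H_\eta-H_0|$ has non-empty base locus is incorrect: the system is base-point-free. The paper proves this by pulling back along a degree-$3$ isogeny $\phi\colon\tilde B\to B$ that trivializes $N$ and splits $E_{[0]}(3,1)$ into three degree-$1$ line bundles; on $\tilde W=\mathbb{P}(\phi^*V)$ one then has global relative coordinates $x_1,x_2,x_3,x_4$, and the pulled-back system can be written out explicitly. Its restriction to $\{x_4=0\}=\mathbb{P}(E_{[0]}(3,1))$ is $|2T'+H'_\eta-H'_0|$, whose base-point-freeness is a theorem of Catanese--Ciliberto. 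Without this \'etale-cover trick your ``dimension count'' has no foothold: the sections on $\mathbb{P}(A)$ come from $\mathrm{Sym}^2A\otimes N\cong E_{[0]}(3,2)\otimes N\oplus E_{[0]}(3,2)\otimes N$, and whether four such sections have common zeros is not something a naive count settles. The same coordinates are what let the paper identify $\mathrm{Sing}(X)$: the unique cubic pulls back to $x_1^3+\zeta x_2^3+\zeta^2 x_3^3=0$, so $\mathrm{Sing}(\tilde X)$ is precisely the section $C=\{x_1=x_2=x_3=0\}$, which a general $Q$ avoids because the $x_4^2$ coefficient (from $N^{\otimes3}\cong\mathcal{O}_B$) is nonzero. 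Your plan to ``check that a general $Q$ avoids $\mathrm{Sing}(X)$'' is right in spirit but cannot be executed until you know $\mathrm{Sing}(X)$ concretely, and for that you need the coordinates.
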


First we prove that the surface $S=Q\cap X$ in Theorem \ref{existence} is smooth.
 In order to get  global relative coordinates on  $W=\mathbb{P}(V)$, we  need to take  a unramified triple covering.  

\begin{lemma}[ \cite{GOP12} Proposition 3.3]
	There exists an   isogeny $\phi: \tilde{B}\rightarrow B $  such that $\phi^*N\cong \mathcal{O}_{\tilde{B}}$ and  $\phi^*E_{[0]}(3,1) \cong \mathcal{O}_{\tilde{B}}(\tilde{0})\oplus \mathcal{O}_{\tilde{B}}(a)\oplus \mathcal{O}_{\tilde{B}}(b)$, where   $\tilde{0}$ is the neutral element of  $\tilde{B}$ and  $\phi^*(0)=\tilde{0}+a+b$.
\end{lemma}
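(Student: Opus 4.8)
The plan is to realise $E_{[0]}(3,1)$ as the direct image of a line bundle under a degree-$3$ cyclic isogeny, and to take $\phi$ to be that isogeny, chosen so that it simultaneously trivialises $N$. First I would record that $N$ is $3$-torsion: from $3\eta\equiv 3\cdot 0$ we get $N^{\otimes 3}\cong\mathcal{O}_B(3\eta-3\cdot 0)\cong\mathcal{O}_B$, and $N\not\cong\mathcal{O}_B$ because $\eta\neq 0$, so $N$ has order exactly $3$ in $\operatorname{Pic}^0(B)$. Writing $G:=\langle N\rangle$ for the resulting cyclic subgroup of order $3$, the duality between finite subgroup schemes of $\widehat{B}\cong\operatorname{Pic}^0(B)$ and isogenies onto $B$ provides an isogeny $\phi:\tilde B\to B$ of degree $3$ with $\ker\bigl(\phi^*\colon\operatorname{Pic}^0(B)\to\operatorname{Pic}^0(\tilde B)\bigr)=G$; in particular $\phi^*N\cong\mathcal{O}_{\tilde B}$. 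Its kernel $\Gamma:=\ker\phi$ is Cartier dual to $G\cong\mathbb{Z}/3$, hence again cyclic of order $3$, so $\phi$ is Galois with $\Gamma$ acting on $\tilde B$ by translations and $\phi^{-1}(0)=\Gamma=\{\tilde 0,a,b\}$, i.e.\ $\phi^*(0)=\tilde 0+a+b$.

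Next I would analyse $\phi_*\mathcal{O}_{\tilde B}(\tilde 0)$, a bundle of rank $3$ and (by $\chi$-invariance under the étale map $\phi$) degree $1$. The Galois base-change identity $\phi^*\phi_*F\cong\bigoplus_{\sigma\in\Gamma}\sigma^*F$, coming from $\tilde B\times_B\tilde B\cong\coprod_{\sigma\in\Gamma}\tilde B$, gives
\[
\phi^*\phi_*\mathcal{O}_{\tilde B}(\tilde 0)\cong\bigoplus_{\sigma\in\Gamma}\sigma^*\mathcal{O}_{\tilde B}(\tilde 0)\cong\mathcal{O}_{\tilde B}(\tilde 0)\oplus\mathcal{O}_{\tilde B}(a)\oplus\mathcal{O}_{\tilde B}(b),
\]
since $\sigma^*\mathcal{O}_{\tilde B}(\tilde 0)=\mathcal{O}_{\tilde B}(\sigma^{-1}\tilde 0)$ and $\sigma^{-1}\tilde 0$ ranges over $\Gamma=\{\tilde 0,a,b\}$. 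As this pullback is a direct sum of degree-$1$ line bundles it is semistable of slope $1$, and since pullback along a finite morphism reflects semistability, $\phi_*\mathcal{O}_{\tilde B}(\tilde 0)$ is semistable; being of coprime rank and degree it is therefore stable, in particular indecomposable. Its determinant is $\mathcal{O}_B(0)$: by the norm formula $\det\phi_*\mathcal{O}_{\tilde B}(\tilde 0)\cong\mathrm{Nm}(\mathcal{O}_{\tilde B}(\tilde 0))\otimes\det\phi_*\mathcal{O}_{\tilde B}$, where $\mathrm{Nm}(\mathcal{O}_{\tilde B}(\tilde 0))=\mathcal{O}_B(\phi(\tilde 0))=\mathcal{O}_B(0)$ and $\det\phi_*\mathcal{O}_{\tilde B}\cong\mathcal{O}_B$ because $\phi_*\mathcal{O}_{\tilde B}\cong\mathcal{O}_B\oplus N^{-1}\oplus N^{-2}$ has determinant $N^{-3}\cong\mathcal{O}_B$. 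By Atiyah's uniqueness of the stable bundle of rank $3$, degree $1$ and prescribed determinant \cite{Ati57}, I conclude $\phi_*\mathcal{O}_{\tilde B}(\tilde 0)\cong E_{[0]}(3,1)$.

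Combining the two steps finishes the proof: the displayed splitting now reads
\[
\phi^*E_{[0]}(3,1)\cong\phi^*\phi_*\mathcal{O}_{\tilde B}(\tilde 0)\cong\mathcal{O}_{\tilde B}(\tilde 0)\oplus\mathcal{O}_{\tilde B}(a)\oplus\mathcal{O}_{\tilde B}(b),
\]
with $\phi^*N\cong\mathcal{O}_{\tilde B}$ and $\phi^*(0)=\tilde 0+a+b$, exactly as claimed. I expect the main obstacle to be the middle step, namely confirming that $\phi_*\mathcal{O}_{\tilde B}(\tilde 0)$ is precisely $E_{[0]}(3,1)$ and not merely some rank-$3$ degree-$1$ bundle; this rests on the semistability-reflection argument and the determinant computation, after which Atiyah's classification closes the gap. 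A reassuring point is that these computations are independent of which order-$3$ étale cover is used, so we are free to take the one that trivialises $N$, letting a single isogeny serve both purposes.
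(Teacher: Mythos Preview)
Your argument is correct. The paper itself gives no proof of this lemma at all: it is simply quoted as Proposition~3.3 of \cite{GOP12}, so there is no ``paper's own proof'' to compare against. Your route---constructing the degree-$3$ cyclic isogeny that kills $N$, identifying $\phi_*\mathcal{O}_{\tilde B}(\tilde 0)$ with $E_{[0]}(3,1)$ via the semistability-reflection argument plus the norm/determinant computation, and then reading off the pullback from $\phi^*\phi_*\cong\bigoplus_{\sigma\in\Gamma}\sigma^*$---is the standard way to prove such splitting statements and is essentially what one finds in \cite{GOP12}. Each step checks out: the reflection of semistability under finite pullback is the easy direction, coprimality of rank and degree upgrades semistable to stable, and the determinant calculation $\det\phi_*\mathcal{O}_{\tilde B}(\tilde 0)\cong\mathrm{Nm}(\mathcal{O}_{\tilde B}(\tilde 0))\otimes\det\phi_*\mathcal{O}_{\tilde B}\cong\mathcal{O}_B(0)$ pins down the Atiyah bundle uniquely.
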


\vspace{2ex}
Now let $\tilde{V}:=\phi^*(V)=\mathcal{O}_{\tilde{B}}(\tilde{0})\oplus \mathcal{O}_{\tilde{B}}(a)\oplus \mathcal{O}_{\tilde{B}}(b)\oplus \mathcal{O}_{\tilde{B}}$. Then we have the following commutative diagram:
$$\xymatrix{\tilde{W}:=\mathbb{P}(\tilde{V})\ar[rrr]^{\Phi}\ar[d]^{\tilde{\pi}}& &&W=\mathbb{P}(V)\ar[d]^{\pi} \\
	\tilde{B}\ar[rrr]^{\phi} &&&B}
$$
where $\tilde{\pi}:\tilde{W}\rightarrow \tilde{B}$ is the natural projection.
Let $\tilde{T}:=\Phi^*T$ and $\tilde{H}:=\Phi^*H$. The unramified  triple cover $\phi: \tilde{B}\rightarrow B$ induces an automorphism group $G'=<\sigma'>\cong \mathbb{Z}_3$ of $\tilde{B}$; similarly, the unramified  triple cover induces an automorphism group $G=<\sigma>\cong \mathbb{Z}_3$ on $\tilde{W}$. Moreover we have  $G|_{\tilde{B}}=G'$.

\begin{lemma}
	\label{bpf}
The linear system $|2T+H_\eta-H_0|$ on $W$ is base point free.
\end{lemma}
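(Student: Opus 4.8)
The plan is to pull everything back to the triple cover $\tilde W = \mathbb{P}(\tilde V)$, where $\tilde V$ is a direct sum of line bundles and hence the tautological sections can be written in explicit global relative coordinates, and then to descend the conclusion. First I would note that $\Phi\colon \tilde W \to W$ is finite and surjective, so a point $w \in W$ is a base point of $|2T + H_\eta - H_0|$ only if every point of $\Phi^{-1}(w)$ is a base point of the pulled-back system $|\Phi^*(2T + H_\eta - H_0)| = |2\tilde T + \tilde H_{\tilde\eta'} - \tilde H_{\tilde 0'}|$ where the fibre divisors are the preimages of $H_\eta, H_0$ under $\phi$ (each a sum of three fibres of $\tilde\pi$). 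Conversely, since $\Phi^* \colon H^0(W,\mathcal O_W(2T+H_\eta - H_0)) \hookrightarrow H^0(\tilde W, \Phi^*(\cdots))$ and in fact the image is the $G$-invariant part, it suffices to show that the $G$-invariant sections of the pulled-back line bundle have no common zero on $\tilde W$; indeed if $x \in \tilde W$ were a common zero of all invariant sections, then by averaging over $G$ (which acts compatibly), $\Phi(x)$ would be a base point downstairs, contradiction — so really I just need base-point-freeness on $\tilde W$ together with a check that enough invariant sections survive.

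The concrete computation on $\tilde W$ goes as follows. Writing $\tilde V = \mathcal O_{\tilde B}(\tilde 0)\oplus \mathcal O_{\tilde B}(a)\oplus \mathcal O_{\tilde B}(b)\oplus \mathcal O_{\tilde B}$ with homogeneous relative coordinates $(x_0:x_1:x_2:x_3)$, a section of $\mathcal O_{\tilde W}(2\tilde T)$ is a relative quadric whose coefficients are sections of appropriate line bundles on $\tilde B$; twisting by $\tilde H_{\tilde\eta'} - \tilde H_{\tilde 0'}$ shifts these coefficient bundles by a degree-zero (but generally nontrivial) line bundle pulled up from $\tilde B$. I would tabulate, monomial by monomial ($x_i x_j$ for $0\le i\le j\le 3$), the line bundle $L_{ij}$ on $\tilde B$ from which its coefficient is drawn, and compute $h^0(\tilde B, L_{ij})$ using Riemann–Roch on the elliptic curve $\tilde B$ (degree $>0 \Rightarrow h^0 = \deg$; degree $0 \Rightarrow h^0 \in \{0,1\}$ according to triviality). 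The key point is that among these monomials the "pure" ones — in particular those involving $x_3$, which sits in the $\mathcal O_{\tilde B}$ summand of lowest degree, and the interactions of $x_3$ with $x_0,x_1,x_2$ — carry enough global sections that at every point of $\tilde W$ at least one monomial's coefficient section is nonvanishing and the corresponding monomial is nonzero. Concretely: near any point one of the $x_i \ne 0$, and one then exhibits a monomial $x_i x_j$ whose coefficient line bundle is effective with a section not vanishing at the image point in $\tilde B$ (varying $j$ over the four choices always gives one valid option because the four summand degrees, shifted by the fixed degree-zero twist, cannot all produce base points simultaneously on an elliptic curve). Then I would check these sections are, or can be combined into, $G$-invariant ones — this uses the explicit $\mathbb Z_3$-action permuting/scaling the coordinates coming from the isogeny, and $3\eta \equiv 3\cdot 0$ — so they descend to $W$.

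The main obstacle I anticipate is the bookkeeping at the degree-zero coefficient bundles: whenever some $L_{ij}$ has degree exactly $0$, one must decide whether it is $\mathcal O_{\tilde B}$ (contributing a nowhere-zero section) or a nontrivial torsion/degree-zero bundle (contributing nothing), and this is where the specific choice $N = \mathcal O_B(\eta - 0)$ with $\eta$ of order $3$, and the specific splitting type of $\phi^* E_{[0]}(3,1)$, genuinely enter — a wrong torsion class would actually create a base point. So the heart of the argument is verifying that the potentially-problematic degree-zero coefficient bundles are arranged so that no fibre $\tilde H_p$ and no "horizontal" locus is forced into the base locus; equivalently, that the divisor $2\tilde T + \tilde H_{\tilde\eta'} - \tilde H_{\tilde 0'}$ restricted to each fibre $\tilde H_p \cong \mathbb{P}^3$ is $\mathcal O_{\mathbb P^3}(2)$ (already base point free on the fibre) and that the "moving part" along $\tilde B$ covers every fibre — the latter following once we exhibit, for each residue class of fibres, a section not vanishing on it. I would finish by assembling these fibrewise and horizontal non-vanishing statements into the global claim on $\tilde W$ and pushing down to $W$.
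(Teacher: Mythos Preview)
Your overall strategy---pull back along the \'etale triple cover $\Phi$, use the split coordinates on $\tilde W$, and descend via $G$-invariance---is the same as the paper's. The gap is in the execution on the locus where your ``lowest'' coordinate (the paper's $x_4$, corresponding to the $\mathcal{O}_{\tilde B}$ summand) vanishes. Away from that locus you are fine: the coefficient of $x_4^2$ lies in $H^0(\mathcal{O}_{\tilde B})$ and is a nonzero constant, so the $G$-invariant section $a_4x_4^2$ already rules out base points with $x_4\neq 0$. (Incidentally, since $\phi^*N\cong\mathcal{O}_{\tilde B}$, this is the \emph{only} degree-zero coefficient bundle appearing, so your anticipated difficulty with nontrivial degree-zero twists never actually arises.)

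On $\{x_4=0\}=\mathbb{P}(\phi^*E_{[0]}(3,1))$, however, your ``vary $j$'' monomial argument breaks down. The $G$-invariance condition forces $a_2=\sigma^*a_1$, $a_3=(\sigma^*)^2a_1$, and similarly for the mixed coefficients, so you cannot choose the coefficient of a single monomial $x_ix_j$ independently; exhibiting one monomial whose coefficient is nonzero at $p\in\tilde B$ does not produce a $G$-invariant section nonvanishing at the given point $(p,[y_1:y_2:y_3])$. The paper does not attempt a direct computation here: it observes that the restriction of the $G$-invariant system $\Delta$ to $\{x_4=0\}$ is exactly $(\Phi')^*|2T'+H'_\eta-H'_0|$ on $W'=\mathbb{P}(E_{[0]}(3,1))$ and then invokes \cite{CC93}~Theorem~1.18 for the base-point-freeness of $|2T'+H'_\eta-H'_0|$. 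You need either that citation or a genuine argument for this sub-case; the claim that ``the four summand degrees cannot all produce base points simultaneously'' does not survive the $G$-invariance constraint.
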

\begin{proof}
	Let $x_1: \mathcal{O}_{\tilde{B}}(\tilde{0}) \rightarrow \tilde{V}, x_2: \mathcal{O}_{\tilde{B}}(a)\rightarrow \tilde{V}, x_3: \mathcal{O}_{\tilde{B}}(b)\rightarrow \tilde{V}, x_4: \mathcal{O}_{\tilde{B}}\rightarrow \tilde{V}$ be the   global relative coordinates on $\tilde{W}$.
Consider the linear system
$$\Delta:=\{f=a_1x_1^2+a_2x_2^2+a_3x_3^2+a_4x_4^2+a_5x_1x_2+a_6x_2x_3+a_7x_3x_1+a_8x_1x_4+a_9x_2x_4+a_{10}x_3x_4=0\}$$
where 
$a_1\in H^0(\mathcal{O}_{\tilde{B}}(2\cdot \tilde{0})\otimes \phi^*\mathcal{O}_B(\eta-0)), a_2=\sigma^*a_1, a_3=\sigma^*a_2, a_4\in H^0(\mathcal{O}_{\tilde{B}}), a_5\in H^0(\mathcal{O}_{\tilde{B}}(\tilde{0}+a)\otimes \phi^*\mathcal{O}_B(\eta-0)), a_6=\sigma^*a_5, a_7=\sigma^*a_6, a_8\in H^0(\mathcal{O}_{\tilde{B}}(\tilde{0})\otimes \phi^*\mathcal{O}_B(\eta-0)), a_9=\sigma^*a_8, a_{10}=\sigma^*a_9$. Note that the action of $G$ is: $x_1\mapsto x_2,x_2\mapsto x_3,x_3\mapsto x_1,x_4\mapsto \zeta x_4, a_4\mapsto \zeta a_4$, where  $\zeta$ is a primitive root of degree 3. Hence
  $\Delta$ is $G$-invariant and it is easy to see that   $\Delta = \Phi^*|2T+H_\eta-H_0|$. Now   it suffices to show that $\Delta$ is base point free.

If $x_4\neq0$, when $a_4$ varies, $\Delta$ has no base point.

 If $x_4=0$, then $a_1x_1^2+a_2x_2^2+a_3x_3^2=0$.  So  we need to show that $\Delta':=\Delta|_{\{x_4=0\}}$ has no base points on $\{x_4=0\}$.  
 Note that $\{x_4=0\}$ is nothing but the  $\mathbb{P}^2$-bundle  $\tilde{\pi} ': \tilde{W}':=\mathbb{P}(\mathcal{O}_{\tilde{B}}(\tilde{0})\oplus \mathcal{O}_{\tilde{B}}(a)\oplus \mathcal{O}_{\tilde{B}}(b)) \rightarrow B$. 
 Let  $\pi ': W':=\mathbb{P}(E_{[0]}(3,1)) \rightarrow B$ be the natural projection  and let  $\Phi':=\Phi|_{\tilde{W}'}$. Then we have the following commutative diagram:
 $$\xymatrix{\tilde{W}'\subset \tilde{W}\ar[rrr]^{\Phi'}\ar[d]^{\tilde{\pi}'}& &&W'\subset{W} \ar[d]^{\pi'} \\
	\tilde{B}\ar[rrr]^{\phi} &&&B}
$$

  Denote by   $T'$ (resp. $H'$)   the tautological divisor  (resp. fibre) of $W'$. Then  we have $T|_{W'}=T', H|_{W'}=H'$ and thus  $\Delta'=(\Phi')^*|2T'+H'_\eta-H'_0|$.  By \cite{CC93} Theorem 1.18,   $|2T'+H'_\eta-H'_0|$ is base point free. Hence   $\Delta'$ is base point free.

 Therefore $\Delta$ is base point free and   consequently 
 $|2T+H_\eta-H_0|$ is   base point free.   
\end{proof}

\begin{lemma}
	\label{singular locus}
$\tilde{X}:=\Phi^*X$ is an integral normal subvariety of $\tilde{W}$, and its singular locus $Sing(\tilde{X})$ is exactly the  curve $C: x_1=x_2=x_3=0$.
\end{lemma}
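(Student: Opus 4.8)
My approach is to make the defining relative cubic of $\tilde X$ completely explicit in the global relative coordinates $x_1,\dots,x_4$ on $\tilde W$ used above, and then to read off all the geometry from it.

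The first step is to compute $\tilde\pi_*\mathcal{O}_{\tilde W}(3\tilde T-\Phi^*H_\tau)=S^3\tilde V\otimes\phi^*\mathcal{O}_B(-\tau)$. Since $\tilde V=\mathcal{O}_{\tilde B}(\tilde 0)\oplus\mathcal{O}_{\tilde B}(a)\oplus\mathcal{O}_{\tilde B}(b)\oplus\mathcal{O}_{\tilde B}$ is a sum of line bundles, $S^3\tilde V$ splits as a direct sum of line bundles indexed by the degree-$3$ monomials in the $x_i$; using $\tau\equiv 2\cdot 0-\eta$ together with $\phi^*N\cong\mathcal{O}_{\tilde B}$ and $\phi^*(0)=\tilde 0+a+b$ (so that $\phi^*\mathcal{O}_B(\tau)\cong\mathcal{O}_{\tilde B}(\tilde 0+a+b)$), one checks that the summand attached to $x_1^{e_1}x_2^{e_2}x_3^{e_3}x_4^{e_4}$ has degree $-e_4$ and is trivial only for the four monomials $x_1^3,x_2^3,x_3^3,x_1x_2x_3$. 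Combined with the $\mathbb{Z}_3$-equivariance of $\Phi$, which cyclically permutes $x_1,x_2,x_3$ and multiplies $x_4$ by a primitive cube root of unity, this fixes the descent datum and pins the pulled-back equation of $X$ down to, after rescaling the $x_i$, the relative Fermat cubic $x_1^3+x_2^3+x_3^3$ — the $x_1x_2x_3$-term being eliminated by the equivariance, consistently with $X$ being the unique effective member of $|3T-H_\tau|$. Passing to a local trivialisation of $\tilde V$ over $\tilde B$, which absorbs the twist factors carried by the $x_i$, this equation cuts out on \emph{every} fibre $H_p$ of $\tilde\pi$ the projective cone, with vertex $C\cap H_p$, over a nonsingular plane cubic in $\mathbb{P}^2_{x_1,x_2,x_3}$.

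Given this, $Sing(\tilde X)$ follows from the Jacobian criterion. The equation $f$ lies in $(x_1,x_2,x_3)^2$, so $\tilde X$ contains $C:x_1=x_2=x_3=0$ and is singular along it. Conversely, on $\tilde X\setminus C$ at least one of $x_1,x_2,x_3$ is nonzero; since in a local trivialisation $f$ and its partial derivatives depend only on $x_1,x_2,x_3$ and not on the base parameter, such a point is singular on $\tilde X$ exactly when the corresponding point is singular on the plane cubic, which never happens. Hence $Sing(\tilde X)=C$.

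Normality then follows from Serre's criterion: $\tilde X$ is a hypersurface in the smooth fourfold $\tilde W$, hence Cohen--Macaulay and in particular satisfies $S_2$, while $Sing(\tilde X)=C$ has codimension $2$ in the threefold $\tilde X$, so $\tilde X$ satisfies $R_1$. Integrality follows because $\tilde W$ is irreducible and the general fibre of $\tilde X\to\tilde B$ is irreducible, and because $f$ defines a reduced divisor (its fibres are reduced plane cubic cones). The one genuinely delicate point is the first step: identifying the equation of $\tilde X$ requires careful bookkeeping of the line-bundle twists carried by the $x_i$ and of the torsion arithmetic of $\phi$, $a$, $b$ and $\tau$ (in particular, checking which degree-zero line bundles on $\tilde B$ are trivial and that $x_1x_2x_3$ does not descend), together with the verification that the fibrewise plane cubic stays nonsingular even over the finitely many fibres where the coordinates $x_i$ degenerate; once this is secured, the Jacobian computation and Serre's criterion are routine.
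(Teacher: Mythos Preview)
Your approach is essentially the same as the paper's. Both identify the defining equation of $\tilde X$ by computing which monomial summands of $S^3\tilde V$ (twisted appropriately) carry nonzero global sections, then use the $\mathbb{Z}_3$-equivariance to kill the $x_1x_2x_3$-term and normalise the remaining coefficients, arriving at (a rescaling of) the paper's equation $g=x_1^3+\zeta x_2^3+\zeta^2 x_3^3$; the Jacobian computation and the conclusion of normality/integrality then proceed identically, with your invocation of Serre's criterion just making explicit what the paper's phrase ``regular in codimension~1 and defined by an irreducible equation'' is using. Your bookkeeping of the twist $\phi^*\mathcal{O}_B(-\tau)\cong\mathcal{O}_{\tilde B}(-\tilde 0-a-b)$ and the resulting degree $-e_4$ for each monomial is a slightly more detailed justification of the shape of $g$ than the paper gives, but the argument is the same.
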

\begin{proof}
	Note that  $\tilde{X}\in |3\tilde{T}-3\tilde{H}_{\tilde{0}}|$  and  $\tilde{X}$ is defined by an equation of  the form
$g=c_1x_1^3+c_2x_2^3+c_3x_3^3+c_4x_1x_2x_3=0$ (where $c_1,c_2,c_3,c_4 \in H^0(\phi^*N)$) such that $\sigma^*g=g$. Since the action of $\sigma^*$ is: $x_1\mapsto x_2, x_2\mapsto x_3, x_3\mapsto x_1$ and $s\mapsto \zeta s$ for $s\in H^0(\phi^*N)$ ($\zeta$ is a primitive root of degree 3),  we see   $\sigma^*g=\zeta c_1x_2^3+\zeta c_2x_3^3+\zeta c_3x_1^3+\zeta c_4x_1x_2x_3$. Hence  $c_3=\zeta c_2=\zeta^2 c_1, c_4=0$ and     $\tilde{X}$ is defined by  $g=x_1^3+\zeta x_2^3+\zeta^2 x_3^3=0$.

 Since $(\frac{\partial g}{\partial x_1},\frac{\partial g}{\partial x_2},\frac{\partial g}{\partial x_3})=(3x_1^2,3\zeta x_2^2,3\zeta^2x_3^2)$, a point of $\tilde{X}$ is singular if and only if $x_1=x_2=x_3=0$.  Hence the singular locus of $\tilde{X}$ is exactly the curve  $C:=\{x_1=x_2=x_3=0\}$. 

Since $\tilde{X}$ is regular in codimension 1 and   defined by an irreducible equation in $\tilde{W}$,   $\tilde{X}$ is an integral normal subvariety of $\tilde{W}$.
\end{proof}

\begin{cor}
 \label{smooth}
 Let    $S=Q\cap X$ as in Theorem \ref{existence}. Then $S$ is a smooth surface.
\end{cor}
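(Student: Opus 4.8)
The plan is to reduce the smoothness of $S=Q\cap X$ to a Bertini argument on the $3$-fold $X$, the only real issue being that $X$ is singular; I will show that a general $Q$ avoids $Sing(X)$, after which transversality is automatic. Since $\Phi\colon\tilde W\to W$ is étale, I would first record that $Sing(X)=\Phi(Sing(\tilde X))=\Phi(C)$ by Lemma \ref{singular locus}, so $Sing(X)$ is a curve, $X$ is an integral normal $3$-fold, and $S$ is smooth if and only if $\tilde S:=\Phi^{-1}(S)=\tilde Q\cap\tilde X$ is smooth (smoothness and transversality being étale-local).

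The key step is to check that a general $Q\in|2T+H_\eta-H_0|$ is disjoint from $Sing(X)$, equivalently that $\tilde Q:=\Phi^*Q$ is disjoint from the curve $C=\{x_1=x_2=x_3=0\}$. Here I would use the explicit description of $\Delta=\Phi^*|2T+H_\eta-H_0|$ from the proof of Lemma \ref{bpf}: writing $\tilde Q=\{f=0\}$ with $f=a_1x_1^2+\cdots+a_{10}x_3x_4$, all terms involving $x_1,x_2,x_3$ vanish on $C$, so $f|_C=a_4x_4^2$. For general $Q$ the coefficient $a_4\in H^0(\mathcal O_{\tilde B})$ is a nonzero constant, hence nowhere vanishing, and $x_4$ is nowhere zero on $C$; thus $f|_C$ never vanishes, $\tilde Q\cap C=\varnothing$, and therefore $Q\cap Sing(X)=\varnothing$.

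It then remains to invoke Bertini. Because $Q$ avoids $Sing(X)$ one has $S=Q\cap X=Q\cap X_{\mathrm{sm}}$ with $X_{\mathrm{sm}}:=X\setminus Sing(X)$ a smooth quasi-projective $3$-fold (here one uses that $\tilde X$ is smooth off $C$ and $\Phi$ is étale). By Lemma \ref{bpf} the linear system $|2T+H_\eta-H_0|$ is base point free on $W$, hence so is its restriction to $X_{\mathrm{sm}}$; a general member of a base point free linear system on a smooth variety over $\mathbb C$ is smooth, and a general $Q$ restricts to a general member of this restricted system, so $S$ is a smooth surface (of pure dimension $2$, since the system does not contract the fixed $3$-fold $X$).

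I expect the main obstacle to be the interaction between ``general $Q$'' and the triple cover: one cannot simply quote Bertini upstairs, because $\tilde Q=\Phi^*Q$ is $G$-invariant and so is never a general member of $\Delta$. The resolution is precisely the split above — the cover is used only to locate $Sing(X)$ and to compute $f|_C=a_4x_4^2$, and once that yields $Q\cap Sing(X)=\varnothing$, the transversality statement is proved on the honest smooth $3$-fold $X_{\mathrm{sm}}\subset W$, where the base point free system $|2T+H_\eta-H_0|$ itself (not its pullback) is available and Bertini applies directly.
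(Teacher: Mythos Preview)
Your argument is correct and follows essentially the same strategy as the paper: show that a general $Q$ misses the singular curve, then apply Bertini on the smooth locus. The only difference is that the paper carries out the Bertini step upstairs on $\tilde X$ rather than on $X_{\mathrm{sm}}$; your stated worry that ``$\tilde Q=\Phi^*Q$ is never a general member of $\Delta$'' is unfounded, since by definition $\Delta=\Phi^*|2T+H_\eta-H_0|$, so a general member of $\Delta$ is precisely $\Phi^*Q$ for general $Q$, and Bertini applies to any base point free linear system, complete or not. Thus the upstairs argument the paper uses is perfectly valid, and your downstairs variant is an equally clean alternative.
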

\begin{proof}
	 By Lemma \ref{bpf}, $\Delta$ is base point free.  Note that  a general member $\tilde{Q}:=\Phi^*Q\in \Delta$ does not intersect with $C=Sing(\tilde{X})$.   By Bertini's Theorem, $\tilde{S}=\tilde{Q}\cap \tilde{X}$ is smooth. Since  $\Phi|_{\tilde{S}}: \tilde{S}\rightarrow S$ is a unramified triple cover, we see that   $S=\phi(\tilde{S})$
is also smooth.
 \end{proof}



\vspace{2ex}
Now we calculate the numerical inariants of $S$.
\begin{lemma}
	\label{invariants}
Let $S $ be  as in Theorem \ref{existence}.  Then we have $h^0(\mathcal{O}_S)=p_g(S)=q(S)=1$ and $K_S^2=4$.  Moreover  $S$ is   a minimal surface.
\end{lemma}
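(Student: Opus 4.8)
The plan is to compute everything from the complete-intersection structure of $S$ inside the projective bundle $W=\mathbb{P}(V)$ over the elliptic curve $B$, using the standard exact sequences and the projection formula.

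First I would compute $\chi(\mathcal{O}_S)$ and $K_S^2$. Since $S=Q\cap X$ with $Q\in|2T+H_\eta-H_0|$ and $X\in|3T-H_\tau|$, adjunction on $W$ (a $\mathbb{P}^3$-bundle, so $K_W\equiv -4T+\pi^*(\det V)+\pi^*K_B\equiv -4T$ because $\det V=\mathcal{O}_B(0)$ and $K_B\equiv 0$) gives
$$K_S\equiv\big(K_W+Q+X\big)\big|_S\equiv\big(-4T+(2T+H_\eta-H_0)+(3T-H_\tau)\big)\big|_S\equiv\big(T+H_\eta-H_0-H_\tau\big)\big|_S.$$
Since $H_\eta$, $H_0$, $H_\tau$ are all numerically equivalent fibres, $K_S\sim_{num}(T+F)|_S$ with $F$ a fibre class, and then $K_S^2$ is computed as an intersection number on $W$: $K_S^2=(T+H_\eta-H_0-H_\tau)^2\cdot Q\cdot X$ in $W$, expanded using $T^4=T^3\cdot\pi^*(\text{pt})\cdot(\dots)$ via the Grothendieck relation for $\mathbb{P}(V)$ with $V$ of rank $4$ and $\deg\det V=1$; I expect this to come out to $4$ after the routine bookkeeping with $T^3\cdot H=1$, $T^4=-c_1(V)\cdot(\text{lower})=$ the appropriate Segre number. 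For $\chi(\mathcal{O}_S)$ I would use the Koszul resolution
$$0\to\mathcal{O}_W(-Q-X)\to\mathcal{O}_W(-Q)\oplus\mathcal{O}_W(-X)\to\mathcal{O}_W\to\mathcal{O}_S\to 0,$$
push forward to $B$ via $\pi$, and compute each $R^i\pi_*$ of the line bundles $\mathcal{O}_W(aT+bF)$ by the projection formula ($R^0\pi_*\mathcal{O}_W(aT)=S^aV$, higher direct images vanishing for $a\ge0$, and Serre duality on fibres for $a\le -1$, with the only interesting term coming from $a=-3,-4,-5$). Summing Euler characteristics on $B$ (where $\chi$ of any vector bundle is just its degree) should yield $\chi(\mathcal{O}_S)=1$, hence $p_g=q$ once I know $q\ge1$.

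Next, to pin down $q=1$ (equivalently $p_g=1$), I would show $h^1(\mathcal{O}_S)\ge 1$ by exhibiting the Albanese-type fibration: $\pi|_S:S\to B$ is a morphism to a curve of genus $1$ with connected fibres (connectedness because a fibre is $Q_p\cap X_p$, a $(2,3)$ complete intersection in $\mathbb{P}^3$, which is a connected curve — indeed a canonical genus-$4$ curve for general $p$), so $q\ge 1$; combined with $\chi(\mathcal{O}_S)=1$ this forces $p_g=q=1$ and $h^0(\mathcal{O}_S)=1$. The genus-$4$ claim for the general fibre and nonhyperellipticity are not needed here (they are part of Theorem \ref{existence}), only the existence of the fibration over $B$.

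Finally, minimality: I would argue that $K_S$ is nef. One clean route is to show $K_S\equiv(T+F)|_S$ is a restriction of a nef-enough class and that $S$ contains no $(-1)$-curve, by a direct intersection computation — any curve $\Gamma\subset S$ either dominates $B$ (then $\Gamma\cdot F>0$ and $K_S\cdot\Gamma\ge 0$ from the $T$-part, using that $T|_S$ is the relative canonical/hyperplane class which is relatively very ample on the genus-$4$ fibres, hence relatively nef) or is contained in a fibre (then $\Gamma$ is a component of a canonical curve of arithmetic genus $4$, and $K_S\cdot\Gamma=\Gamma^2+2p_a(\Gamma)-2\ge 0$ unless $\Gamma$ is a $(-1)$-curve, which one excludes because $\Gamma^2=-1$ on $S$ would force $\Gamma\cdot F=0$ and $\Gamma$ a rational component of a fibre with $\Gamma\cdot(T|_S)=1$, contradicting that $T|_S$ restricted to a fibre is the canonical bundle of a genus-$4$ curve, which has no degree-$1$ quotient through a $(-1)$-curve configuration — more simply, a smooth genus-$4$ fibre is irreducible, and for the general fibre this rules out fibre components entirely, while the finitely many special fibres can be handled by semicontinuity of $K_S^2$ or by noting $K_S$ is already a limit of the general-fibre computation). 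The cleanest version: since the general Albanese fibre is a smooth irreducible canonical curve, $K_S$ restricts to it as $2g-2=6>0$, $T|_S$ is $\pi|_S$-ample, and $(K_S)^2=4>0$ with $\chi(\mathcal{O}_S)=1$ shows $S$ is of general type; then $K_S$ nef follows because any irreducible curve $\Gamma$ with $K_S\cdot\Gamma<0$ would be a $(-1)$-curve, necessarily contracted by the Albanese map hence inside a fibre, but fibres of $\pi|_S$ are $(2,3)$ complete intersections in $\mathbb{P}^3$ which (being $1$-dimensional complete intersections with trivial canonical-by-adjunction $\omega=\mathcal{O}(-4+2+3)|_C=\mathcal{O}(1)|_C$ of degree $6$) contain no $(-1)$-curves of the surface $S$.

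\medskip

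\emph{Main obstacle.} The delicate point is the minimality step, i.e. ruling out $(-1)$-curves in the special (singular or non-reduced) Albanese fibres; the genus and nonhyperellipticity of the general fibre are easy consequences of the $(2,3)$ structure, and the numerical computation of $\chi(\mathcal{O}_S)$ and $K_S^2$ is bookkeeping with Grothendieck relations on $\mathbb{P}(V)$, but one must argue carefully that no component of a reducible fibre is a $(-1)$-curve — for which I expect to use that $T|_S$ is $\pi$-ample and $K_S\equiv T|_S+(\text{fibre})$, so $K_S\cdot\Gamma\ge T|_S\cdot\Gamma>0$ for every vertical curve $\Gamma$, immediately giving nefness of $K_S$ and hence minimality.
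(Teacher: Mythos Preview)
Your overall strategy---compute $K_S$ by adjunction, get the cohomology of $\mathcal{O}_S$ from the Koszul resolution, and deduce minimality from nefness of $K_S$---is the same as the paper's. But there are three concrete problems.

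\textbf{(1) The canonical class computation is off.} You claim $\det V=\mathcal{O}_B(0)$, but $V=E_{[0]}(3,1)\oplus N$ has $\det V=\mathcal{O}_B(0)\otimes N=\mathcal{O}_B(\eta)$; the phrase ``with $\det(V)=\mathcal{O}_B(0)$'' in the paper refers to $E_{[0]}(3,1)$. Hence $K_W=-4T+H_\eta$, and
\[
K_S\equiv\bigl(-4T+H_\eta+(2T+H_\eta-H_0)+(3T-H_\tau)\bigr)\big|_S=(T+2H_\eta-H_0-H_\tau)\big|_S.
\]
Using $\tau\equiv 2\cdot 0-\eta$ and $3\eta\equiv 3\cdot 0$ this is \emph{linearly} $T|_S$, not $(T+F)|_S$ (your sign count $H_\eta-H_0-H_\tau\sim_{num}+F$ is also wrong; it is $-F$). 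The identity $K_S=T|_S$ is what the paper uses and it simplifies everything downstream.

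\textbf{(2) You do not pin down $q=1$.} From $\chi(\mathcal{O}_S)=1$ and the fibration over $B$ you only get $p_g=q\ge 1$, not $=1$. The paper bypasses this by computing $h^i(\mathcal{O}_S(T))=h^i(\omega_S)$ for all $i$ directly, via the two short exact sequences obtained by restricting $\mathcal{O}_W(T)$ first to $Q$ and then to $S$; this yields $h^0=h^1=h^2=1$ in one stroke. Your Koszul resolution for $\mathcal{O}_S$ would also give all three $h^i$'s if you chase the individual cohomology groups (the only nonvanishing piece is $R^3\pi_*\mathcal{O}_W(-Q-X)\cong V^\vee$, with $h^0(V^\vee)=0$, $h^1(V^\vee)=1$), but as written you only extract $\chi$ and then the argument stalls.

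\textbf{(3) Minimality for horizontal curves is not handled.} With the correct $K_S=T|_S$, relative ampleness of $T$ gives $K_S\cdot\Gamma>0$ for every \emph{vertical} curve, as you say. But for a curve $\Gamma$ dominating $B$ you need $T\cdot\Gamma\ge 0$, i.e.\ that $T$ is nef on $W$; relative ampleness says nothing about this. The paper gets nefness of $T$ for free from Lemma~\ref{bpf}: since $|2T+H_\eta-H_0|$ is base-point free and $2T+H_\eta-H_0\sim_{num}2T$, the class $T$ is nef on $W$, hence $K_S=T|_S$ is nef and $S$ is minimal. Your ``main obstacle'' paragraph misidentifies the difficulty: the special fibres are not the issue (vertical curves are easy); the horizontal direction is.
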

\begin{proof}
	Since  $K_W=-4T+H_\eta$, we see $K_S=(K_W+Q+X)|_S=T|_S$ and  $K_S^2=T^2   (3T-H_\tau) (2T+H_\eta-H_0)=4$.

Consider  the cohomology of the exact sequence
$$0\rightarrow \mathcal{O}_W(-T+H_0-H_\eta) \rightarrow \mathcal{O}_W(T) \rightarrow \mathcal{O}_{Q}(T) \rightarrow 0.$$
Since $h^i(\mathcal{O}_W(-T+H_0-H_\eta))=0$ for all $i\geq 0$,
we get $h^0(\mathcal{O}_{Q}(T))=h^0(\mathcal{O}_W(T))=1$ and  $h^i(\mathcal{O}_{Q}(T))=h^i(\mathcal{O}_W(T))=0$ $(i\geq 1)$.

Take the cohomology  of the exact sequence
$$0\rightarrow \mathcal{O}_W(K_W) \rightarrow \mathcal{O}_W(-2T+H_\tau) \rightarrow \mathcal{O}_{Q}(-2T+H_\tau) \rightarrow 0.$$
Since $h^i(\mathcal{O}_W(-2T+H_\tau))=0$ for all $i\geq 0$, $h^i(\mathcal{O}_W(K_W))=h^{4-i}(\mathcal{O}_W)=0$ for $i=1,2$ and  $h^i(\mathcal{O}_W(K_W))=h^{4-i}(\mathcal{O}_W)=1$ for $i=3,4$,
 we get $h^0(\mathcal{O}_{Q}(-2T+H_\tau))=h^1(\mathcal{O}_{Q}(-2T+H_\tau))=0$ and     $h^2(\mathcal{O}_{Q}(-2T+H_\tau))=h^3(\mathcal{O}_{Q}(-2T+H_\tau))=1$.

Now taking the cohomology groups  of the exact sequence
$$0\rightarrow \mathcal{O}_{Q}(-2T+H_\tau) \rightarrow \mathcal{O}_{Q}(T) \rightarrow \mathcal{O}_S(T)\rightarrow 0,$$
we get $h^0(\mathcal{O}_S(T))=h^1(\mathcal{O}_S(T))=h^2(\mathcal{O}_S(T))=1$, i.e. $p_g(S)=q(S)=h^0(\mathcal{O}_S)=1$.
Since $S$ is smooth, $h^0(\mathcal{O}_S)=1$ implies that  $S$ is irreducible.

 Since $|2T+H_\eta-H_0|$ is base point free and  $2T+H_\eta-H_0\sim_{num} 2T$, we see that $T$ is nef. Hence  $K_S=T|_S$ is   nef  and therefore $S$ is minimal.
\end{proof}

\begin{lemma}
	\label{genus is 4}
The restriction map $f:=\pi|_S:  S\rightarrow B$   is the Albanese fibration of $S$, whose general fibres are nonhyperelliptic   of genus 4.
	\end{lemma}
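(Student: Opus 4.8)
The plan is to describe a general fibre of $f$ explicitly as a subvariety of the $\mathbb P^3$-bundle $W=\mathbb P(V)$, and then to recognise $f$ as the Albanese map by the standard factorisation argument available because $q(S)=1$. First I would show that, for $p\in B$ general, the fibre $F=S\cap H_p$ is a complete intersection of type $(2,3)$ in $H_p\cong\mathbb P^3$. Since $S$ is smooth (Corollary~\ref{smooth}) and $f\colon S\to B$ is a surjective morphism onto a smooth curve, generic smoothness in characteristic $0$ shows that $F$ is a smooth curve. Because $T|_{H_p}=\mathcal O_{\mathbb P^3}(1)$ and the fibres $H_0,H_\eta,H_\tau$ miss a general $H_p$, restriction gives $Q|_{H_p}\in|\mathcal O_{\mathbb P^3}(2)|$ and $X|_{H_p}\in|\mathcal O_{\mathbb P^3}(3)|$; as $S$ is irreducible (Lemma~\ref{invariants}) it lies in no fibre, so $F=Q|_{H_p}\cap X|_{H_p}$ is purely one-dimensional and hence a genuine complete intersection. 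A positive-dimensional complete intersection in projective space is connected, so $F$, being also smooth, is irreducible.

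By adjunction, $K_F=(K_{\mathbb P^3}+Q|_{H_p}+X|_{H_p})|_F=\mathcal O_{\mathbb P^3}(1)|_F=\mathcal O_F(1)$ (equivalently $K_F=K_S|_F=T|_F$, using $K_S=T|_S$ from Lemma~\ref{invariants}), which has degree $6$, so $2g-2=6$ and $g=4$. A smooth curve of genus $4$ is not planar (a smooth plane curve of degree $d$ has genus $\binom{d-1}{2}$), so $F$ spans $\mathbb P^3$; hence the restriction $H^0(\mathbb P^3,\mathcal O(1))\to H^0(F,K_F)$ is an injection between $4$-dimensional spaces, so an isomorphism, and the canonical map of $F$ is exactly the embedding $F\hookrightarrow H_p=\mathbb P^3$. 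In particular $F$ is nonhyperelliptic.

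It remains to identify $f$ with the Albanese fibration. By Lemma~\ref{invariants}, $q(S)=1$, so $E:=\mathrm{Alb}(S)$ is an elliptic curve and the Albanese map $a\colon S\to E$ (normalised so that $a(s_0)=0$ for a chosen $s_0\in S$) has connected fibres; this is standard for $q=1$ (Stein-factorise $a=h\circ a'$: the base of $a'$ injects on holomorphic $1$-forms, so has genus $\le 1$, whence $h$ is an isogeny, and $h$ admits a section since $a(S)$ generates $E$, so $h$ is an isomorphism). The morphism $s\mapsto f(s)-f(s_0)$ from $S$ to the elliptic curve $B$ sends $s_0$ to $0$, hence factors through $a$ via a homomorphism $\psi\colon E\to B$; since $f$ is non-constant, $\psi$ is an isogeny, say of degree $d$. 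For general $p$, the fibre $f^{-1}(p)$ is a disjoint union of $d$ connected fibres of $a$, so it has $d$ connected components; but $f^{-1}(p)=F$ is connected, so $d=1$ and $\psi$ is an isomorphism. Hence $f$ agrees, up to this isomorphism and a translation of $B$, with the Albanese map, i.e. $f$ is the Albanese fibration of $S$, with general fibre nonhyperelliptic of genus $4$.

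The point I expect to require the most care is the first one — verifying that the general fibre is a genuinely smooth, irreducible, nondegenerate complete intersection curve, so that the adjunction and canonical-embedding computations apply verbatim; generic smoothness together with the irreducibility of $S$ takes care of this, and the rest is routine.
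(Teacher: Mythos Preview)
Your argument is correct and essentially complete; it differs from the paper's proof mainly in how you establish connectedness and nonhyperellipticity of the general fibre. You work fibrewise in $H_p\cong\mathbb P^3$: connectedness comes from the standard fact that positive-dimensional complete intersections in projective space are connected, and nonhyperellipticity from recognising that $K_F=\mathcal O_F(1)$ together with nondegeneracy forces the canonical map of $F$ to be the given embedding. The paper instead argues globally on $W$: it first shows the relative canonical map is birational (via the vanishing $H^1(T+H)=0$, which gives $|(T+2H)|\big|_H\cong|\mathcal O_H(1)|$), deducing nonhyperellipticity, and then obtains connectedness by a Stein-factorisation contradiction (if $F$ had $m\geq 2$ components, the parity of $K_S\cdot F=6$ would force $m=3$ and genus-$2$ components, contradicting birationality of the relative canonical map). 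Your route is more self-contained and avoids the cohomology computation on $W$; the paper's route has the minor advantage of giving the stronger relative statement (birationality of the relative canonical map) directly, which is closer in spirit to the later sections. Your treatment of the Albanese identification is also more explicit than the paper's one-line appeal to the universal property; note that you do not actually need connectedness of the Albanese fibres there, since surjectivity of $a$ alone already gives $f^{-1}(p)=\bigsqcup_{q\in\psi^{-1}(p)}a^{-1}(q)$ with each piece nonempty, forcing $\deg\psi=1$.
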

\begin{proof}
	Let $F=H|_S$ be a general fibre of $f$. Then we have  $K_F =(K_S+F)|_F=(T+H)|_F$. Since  $\deg K_F=(T+H) Q X  H=6$, we get $p_a(F)=4$. Since $f$ is a morphism from a smooth complex surface to a smooth complex curve, by the generic smoothness,     $F$   is smooth.

	Since $H^1(T+H)=0$, we see $|(T+2H)||_H\cong |\mathcal{O}_H(1)|$.  In particular,   the relative canonical map of $f$ is birational,  hence  $F$ is nonhyperelliptic.

	Now we show that $F$ is connected.
	 If $F$ is not connected, by Stein factorization, $f$ can be factored as   $S\stackrel{f'}{\rightarrow} B'\stackrel{\upsilon}{\rightarrow} B$, where $f': S\rightarrow B'$ has connected fibres and $\upsilon: B'\rightarrow B$ is finite of degree $m\geq 2$.    Then we have  $F =\sum_{i=1}^m \upsilon^*\Gamma_i\sim_{num}m \upsilon^*\Gamma$, where   $\Gamma_i$'s  and $\Gamma$ are  fibres of $f'$.  Since $K_S F=6=mK_S  \upsilon^*\Gamma$ and $K_S \upsilon^*\Gamma$ is even, we get $m=3$ and $K_S \upsilon^*\Gamma=2$. Hence  
	 $\upsilon^*\Gamma$ is of genus 2 and     the relative canonical map of $f$ cannot be birational, a contradiction.
	Therefore $F$ is a  connected. Since $F$ is smooth, it must be   irreducible.

By the universal property of the Albanese map, we see that  $f$ is the Albanese fibration of $S$.
\end{proof}
	
	 Combining Corollary \ref{smooth}, Lemma \ref{invariants} and Lemma \ref{genus is 4}, we get  Theorem \ref{existence}.

  \begin{remark}   \label{M not empty V1 is decomposable}
  (i) In fact, $K_S=T|_S$ ( in  Lemma \ref{invariants}) is ample: note that $T$ is nef and  the only irreducible curve in $W$ whose  intersection number  with $T$ is zero is the curve $\Phi(C)$, where $C=\{x_1=x_2=x_3=0\}$ is defined   in Lemma \ref{singular locus}. Since $S\cap C=\emptyset$, we see that $T|_S$ is ample.   
   In particular, $S'$ has no $(-1)$ or $(-2)$ curves and it is  a canonical surface.   This is true for any choices of $Q\in |2T+H_\eta-H_0|$ such that $Q\cap X$ has at most rational double points as singularities.

  (ii) From Lemma \ref{genus is 4}, it is easy  to see $V_1:=f_*\omega_S=V$. In particular, $V_1$ is decomposable. 

\end{remark}

 \begin{defn} \label{defn of family M}
 Notation $B,V,W,T,H,\eta,\tau$ are as before.

    We define  $M'$ the family of (canonical) surfaces $S'$ obtained  as a complete intersection   $S'=Q\cap X$ with $Q\in |2T+H_\eta-H_0|$ and $X\in |3T-H_\tau|$ such that $S'$ has at most rational double points as singularities.

    We define $M$ the family of (minimal) surfaces $S$ such that $S$ is the minimal resolution of a surface $S'\in M'$.

    We denote by $\mathcal{M}$ the image of $M'$ in $\mathcal{M}_{1,1}^{4,4}$.  By Theorem \ref{existence} $\mathcal{M}$ is not empty.
 \end{defn}

	 \begin{lemma}
	 	\label{dimesion of M}
$\mathcal{M}$ is a 4-dimensional irreducible subset of $\mathcal{M}_{1,1}^{4,4}$.
		 	 \end{lemma}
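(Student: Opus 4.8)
The plan is to compute the dimension of $\mathcal{M}$ by carefully counting parameters in the construction and then subtracting the dimension of the group of automorphisms acting on the data. Since $\mathcal{M}$ is defined as the image of the family $M'$ in the moduli space, I would first show that $M'$ is parametrized by an irreducible variety (so that $\mathcal{M}$, being its image under a morphism to the moduli space, is irreducible), and then identify the fibres of the classifying map $M' \to \mathcal{M}$.

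First I would fix the elliptic curve $B$. The elliptic curve itself contributes $1$ modulus (the $j$-invariant), but once $B$ is fixed the data $V, W, T, H, \eta, \tau$ are all determined up to the choices of $\eta$ (a nonzero $3$-torsion point — finitely many choices, contributing $0$) and the group structure. So the only continuous parameter coming from the base is the $1$-dimensional choice of $B$. Next, with $B, W$ fixed, I would count the projective dimension of the linear systems involved: the divisor $X \in |3T - H_\tau|$ is the \emph{unique} effective divisor in its class (as stated in Theorem~\ref{existence}), so it contributes nothing once $\tau$ — hence $\eta$ — is fixed; the divisor $Q$ varies in $|2T + H_\eta - H_0|$, and I would compute $h^0(\mathcal{O}_W(2T+H_\eta-H_0))$ via the projection formula $\pi_*\mathcal{O}_W(2T) = \mathrm{Sym}^2 V$ together with the twist by $\mathcal{O}_B(\eta - 0)$, and take the Riemann–Roch / Atiyah-bundle cohomology on $B$. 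This should give a projective space of some dimension $d$; accounting for the $\mathbb{C}^*$ rescaling of the defining equation of $Q$, the genuine number of parameters for $Q$ is $d$.

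Then I would subtract the dimension of the automorphism group of the pair $(W \to B)$ that preserves the relevant structure — that is, the group of automorphisms of $W$ lying over automorphisms (in fact translations, since $B$ is fixed as an elliptic curve with origin up to translation) of $B$ and compatible with the $\mathbb{P}^3$-bundle structure $\mathbb{P}(V)$ with $V = E_{[0]}(3,1)\oplus N$. This group consists of the bundle automorphisms $\mathrm{PGL}$-part (which by the indecomposability of $E_{[0]}(3,1)$ and $\mathrm{Hom}(N, E_{[0]}(3,1)) = \mathrm{Hom}(E_{[0]}(3,1), N) = 0$ arguments is very small — essentially scalars on each summand modulo the global scalar, so dimension at most $1$) together with the translations of $B$ preserving the isomorphism class of $V$ (again a finite or $1$-dimensional group). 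Collecting: $1$ (for $B$) $+\ d$ (for $Q$) $-\ (\dim \text{Aut})$ should equal $4$. I would also need to argue that, after quotienting by these automorphisms, two surfaces $S' = Q\cap X$ and $S'' = Q''\cap X$ give the same point of $\mathcal{M}_{1,1}^{4,4}$ only if $Q$ and $Q''$ are related by such an automorphism — i.e. that the map from (the automorphism-quotient of) the parameter space to $\mathcal{M}$ is generically finite, or at least that its fibres have the expected dimension; this follows because for a canonical surface with $V_1$ decomposable the embedding $S' \hookrightarrow \mathbb{P}(V_1) = W$ is canonically reconstructed from $S'$ (the content foreshadowed in Theorem~\ref{Sigma is contained in the family}), so the only ambiguity is the choice of identification of $\mathbb{P}(V_1)$ with $W$, which is exactly the automorphism group.

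The main obstacle I expect is the bookkeeping of the automorphism group: one has to be careful to include automorphisms of $W$ covering nontrivial translations of $B$ (since $B$ has a positive-dimensional automorphism group as an abstract curve, though only a finite subgroup fixes $V$ up to isomorphism because $\det V = \mathcal{O}_B(0)$ pins down the relevant point), and to show $\mathrm{Hom}(N, E_{[0]}(3,1)) = 0$ and $\mathrm{Hom}(E_{[0]}(3,1), N) = 0$ so that the linear bundle automorphisms do not mix the two summands of $V$, keeping that group $1$-dimensional (just the relative scalar between the two summands). A secondary point is verifying that the generic such $Q\cap X$ indeed has at most rational double points, so that $M'$ is genuinely the asserted irreducible parameter family and $\mathcal{M}$ inherits irreducibility as the image of an irreducible variety. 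Once the count $1 + d - \dim\mathrm{Aut} = 4$ checks out and irreducibility is in hand, the lemma follows.
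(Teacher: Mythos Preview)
Your overall strategy matches the paper's exactly: count the continuous parameters ($1$ for $B$, $\dim|2T+H_\eta-H_0|$ for $Q$, $\dim|3T-H_\tau|=0$ for $X$) and subtract $\dim\mathbb{P}(\mathrm{Aut}(V))$. The paper's count is $1+5+0-2=4$.

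However, your computation of the automorphism group contains a genuine error. You assert that $\mathrm{Hom}(N,E_{[0]}(3,1))=0$, but this is false: $E_{[0]}(3,1)\otimes N^{-1}$ is an indecomposable bundle of rank $3$ and degree $1$, so by Atiyah's Lemma (Lemma~\ref{Atiyah lemma 15 and theorem 5}) it has $h^0=1$. Thus $\mathrm{End}(V)$ decomposes as
\[
\mathrm{End}(E_{[0]}(3,1))\oplus \mathrm{Hom}(N,E_{[0]}(3,1))\oplus \mathrm{Hom}(E_{[0]}(3,1),N)\oplus \mathrm{End}(N)
\]
of dimensions $1,1,0,1$ respectively, giving $\dim\mathrm{Aut}(V)=3$ and hence $\dim\mathbb{P}(\mathrm{Aut}(V))=2$, not $1$. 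With your claimed value the final count would come out to $5$ rather than $4$. (Your intuition that the translations of $B$ contribute nothing is correct, since $\det V\cong\mathcal{O}_B(0)$ pins down the origin; the missing dimension is entirely in the off-diagonal block $N\to E_{[0]}(3,1)$.) Once you fix this, your argument and the paper's coincide.
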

 \begin{proof}    As in \cite{Pol09} Proposition 5.2, we first compute the dimension of the corresponding  parameter space, and then subtract the dimension of the general fibre of $M\rightarrow \mathcal{M}\subset \mathcal{M}_{1,1}^{4,4}$.

 There is one parameter for the elliptic curve $B$.  Since $V=E_{[0]}(3,1)\oplus N$ is fixed, $\mathbb{P}(V)$ is determined by $B$. Since we have fixed the neutral element of $B$, only finitely many automorphisms of $B$ acting on our data.  Similarly, for the automorphisms of $\mathbb{P}(V)$, we only need to consider the automorphisms of $V$.

Since   $\dim |3T-H_\tau|=0$, $\dim |2T+H_\eta-H_0|=5$ and  $\dim\mathbb{P}(Aut(V))=2$,  we have
   	 $$\dim\mathcal{M}=1+\dim|2T+H_\eta-H_0|+\dim|3T-H_\tau|-\dim \mathbb{P}(Aut(V))=1+5+0-2=4.$$
  \end{proof}

	 \begin{remark}
	 	\label{examples with pg is 2}
In Theorem \ref{existence}, if we take different $V$, we would get some other  examples.

(1) 	If we take $\eta=0$, i.e. $N\cong \mathcal{O}_B$, then $\tau=0$, $Q\in |2T|$ and  $X \in |2T|$.  Since $h^0(\mathcal{O}_W(T))=2$ and  $h^1(\mathcal{O}_W(T))=1$,   we   get a family of  minimal surfaces     with   $K_S^2=4, p_g(S)=q(S)=2$ and genus 4 nonhyperelliptic fibrations (over an elliptic curve).  By the same argument, we  see the image of this family in a 4-dimensional irreducible subset in  $\mathcal{M}_{2,2}^4$, the moduli space of minimal surfaces with $p_g=q=2,K^2=4$.





 (ii) If we take $V=E_{[0]}(3,d)\oplus N$ ($d\geq 1$) with $N$ as in Theorem \ref{existence},  $Q\in |2T+H_\eta-H_0|$ and $X\in |3T-((d-2)H_0+2H_\eta)|$. Then  we have $K_S=T|_S$ and thus $K^2_S=T^2(2T+H_\eta-H_0)(3T-((d-2)H_0+2H_\eta))=4d$. From the exact sequences
  $$0\rightarrow \mathcal{O}_{Q}(-2T+(d-2)H_0+2H_\eta) \rightarrow \mathcal{O}_{Q}(T) \rightarrow \mathcal{O}_S(T)\rightarrow 0,$$
   $$0\rightarrow \mathcal{O}_W(K_W) \rightarrow \mathcal{O}_W(-2T+(d-2)H_0+2H_\eta) \rightarrow \mathcal{O}_{Q}(-2T+(d-2)H_0+2H_\eta) \rightarrow 0,$$
   $$0\rightarrow \mathcal{O}_W(-T+H_0-H_\eta) \rightarrow \mathcal{O}_W(T) \rightarrow \mathcal{O}_{Q}(T) \rightarrow 0,$$

   we get  $p_g(S)=h^0(\mathcal{O}_S(T))=d, q=h^1(\mathcal{O}_S(T))=1, h^0(\mathcal{O}_S)=h^2(\mathcal{O}_S(T))=1.$  By a similar argument as before,  one can show that a general $S$ is a minimal surface.

   Since $q(S)=1$, $f:S\rightarrow B$ is the Albaense fibration of $S$. Hence we get a series of  families  of minimal surfaces with $p_g=d,q=1,K^2=4d=4\chi$ and genus 4 nonhyperelliptic Albanese fibrations. Moreover, $f_*\omega_S=V$.

  
   \end{remark}

In the following sections, we prove that the converse of Theorem \ref{existence} is also true, i.e. the canonical model of  a minimal surface with $p_g=q=1,K^2=g=4$ and a  nonhyperelliptic  Albanese fibration such that $V_1=f_*\omega_S$ is   a complete intersection  of the form $S=Q\cap X$ as in Theorem \ref{existence}.

 The proof depends heavily on studying  the structure of the relative canonical algebra of the Albanese fibration of $S$.    First we    recall   some generalities  on  nonhyperelliptic fibrations of genus 4 that we shall use later. (see Catanese-Pignatelli \cite{CP00} or Takahashi \cite{Tak12})

\section{Some generalities on  nonhyperelliptic fibrations of genus 4}	
  Let  	 $f: S\rightarrow B$  be a relatively minimal   nonhyperelliptic fibration of genus $g=4$. Let  $b=g(B)$  be the genus of $B$.  Define $K_{S/B}:=K_S-f^*K_B$ and   $\omega_{S/B}:=\mathcal{O}_S(K_{S/B})$. Then  $K_{S/B}^2=K_S^2-8(b-1)(g-1)$. Define further  $\chi(S/B):=\chi(\mathcal{O}_S)-(g-1)(b-1)$.

    Let   $V_n:=f_*\omega_{S/B}^{\otimes n}$  and let   $\mathcal{R}(f):=\bigoplus_{n\geq 0}V_n$ be the relative canonical algebra of $f$. 	Denote by $\mu_{n,m}: V_n\otimes V_m\rightarrow V_{n+m}$, respectively by $\sigma_n: S^n(V_1):= \sym^n(V_1)\rightarrow V_n$   the homomorphism induced by multiplication. Set   $\mathcal{L}_n:= \kernel \sigma_n$ and $\mathcal{T}_n:= \cokernel \sigma_n$.

        Since we have assumed that the general fibre of $f$ is nonhyperelliptic, by Max Noether's theorem on canonical curves, $\mathcal{T}_n$ is a torsion sheaf.  Hence we have $\rank \mathcal{L}_n=\rank S^n(V_1)-\rank V_n$ and $\deg \mathcal{L}_n=\deg S^n(V_1)-\deg V_n+\deg \mathcal{T}_n$.   By  \cite{CP06} sections 2 and 3 we have   $\rank S^n(V_1)=\binom{n+3}{n}$, $\deg S^n(V_1)=\binom{n+3}{4}\chi(S/B)$;  $\rank V_n=3(2n-1)$, $\deg V_n= \frac{1}{2}n(n-1)K_{S/B}^2+\chi(S/B)$.   In particular, we have $\rank \mathcal{L}_2=1$ and   $\rank \mathcal{L}_3=5$.

    \vspace{2ex}
    Now let $\pi: \mathbb{P}(V_1)\rightarrow B$ be the natural projective bundle over $B$ and let $w: S\dashrightarrow  \mathbb{P}(V_1)$ be the relative canonical (rational) map of $f$. Let $\Sigma:=w(S)$ be the relative canonical image of $S$.  Now we give some  relations between the  structure of the relative canonical algebra $\mathcal{R}(f)$  and the geometry of the  relative canonical image $\Sigma$.

           First we show that there is a (unique) relative hyperquadric $Q\subset \mathbb{P}(V_1)$ that contains $\Sigma$.

        \subsection{The relative hyperquadric $Q$ containing $\Sigma$}
        Since $\rank \mathcal{L}_2=1$, $\mathcal{L}_2$ is a line bundle. The injection $\mathcal{L}_2\hookrightarrow S^2(V_1)$ defines a section $q\in H^0(B,S^2(V_1)\otimes \mathcal{L}_2^{-1})\cong H^0(\mathcal{O}_{\mathbb{P}(V_1)}(2)\otimes \pi^*\mathcal{L}_2^{-1})$, and thus defines a relative hyperquadric $Q\subset \mathbb{P}(V_1)$. Note that $Q$ contains $\Sigma$: if we restrict to a general fibre $H\cong \mathbb{P}^3$ of $\pi$, $Q|_H$ is exactly the unique hyperquadric in $\mathbb{P}^3$ that contains $\Sigma|_{H}$ (since for general $H$, $\Sigma|_{H}$ is a smooth  nonhyperelliptic curve of genus 4), hence $Q$ contains a Zariski open subset of $\Sigma$. Since $Q$ is a closed subvariety of $\mathbb{P}(V_1)$, we have $\Sigma\subset Q$.

When $\mathcal{T}_2=0$, we have the following lemma due to Takahashi and Konno (cf. \cite{Tak12} Lemma 1).

 \begin{lemma}  \label{Takahashi's lemma generated in degree 1}
If $\mathcal{T}_2=0$, then   $w: S\rightarrow \mathbb{P}(V_1)$ is a morphism and    $\mathcal{R}(f)$ is generated in degree 1. Hence
 $\Sigma$ is isomorphic to its relative canonical model.  In particular, $\Sigma$ has at most rational double points as singularities.
 \end{lemma}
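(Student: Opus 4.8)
\emph{Strategy.} The plan is to obtain all four assertions from the single statement that $\mathcal{R}(f)$ is generated in degree $1$, i.e. that $\sigma_n\colon S^n(V_1)\to V_n$ is surjective (equivalently $\mathcal{T}_n=0$) for every $n$. Granting this, the induced surjection of graded $\mathcal{O}_B$-algebras $\mathrm{Sym}_{\mathcal{O}_B}(V_1)\twoheadrightarrow\mathcal{R}(f)$ gives a closed immersion of the relative canonical model $X:=\mathrm{Proj}_B\mathcal{R}(f)$ into $\mathbb{P}(V_1)$, with image $\Sigma$. Since $f$ is relatively minimal, $\omega_{S/B}$ is $f$-nef and $f$-big, so the relative canonical morphism $\rho\colon S\to X$ exists and contracts exactly the $(-2)$-curves contained in fibres, which form $ADE$ configurations; hence $X$ has only rational double points. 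As $\mathcal{O}_X(1)$ pulls back to $\omega_{S/B}$, the relative canonical map $w$ is the composite $S\xrightarrow{\rho}X\hookrightarrow\mathbb{P}(V_1)$, which is a morphism with image $\Sigma\cong X$; this gives at one stroke that $w$ is a morphism, that $\Sigma$ is (isomorphic to) the relative canonical model, and that $\Sigma$ has at worst rational double points.

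\emph{Reduction to a fibrewise statement.} It remains to prove generation in degree $1$. From $\sigma_n=\mu_{1,n-1}\circ(\mathrm{id}_{V_1}\otimes\sigma_{n-1})$ and the hypothesis $\mathcal{T}_2=0$, an induction reduces this to the surjectivity of $\mu_{1,n-1}\colon V_1\otimes V_{n-1}\to V_n$ for all $n\ge 2$. Its cokernel is a torsion sheaf on $B$, so it vanishes iff it vanishes fibrewise; and since $h^1(\bar F_t,\omega_{\bar F_t}^{\otimes n})$ is independent of $t$ (it is $1$ for $n=1$ and $0$ for $n\ge 2$, where $\bar F_t$ denotes the fibre of $X\to B$), cohomology and base change identify $\mu_{1,n-1}\otimes k(t)$ with the multiplication map $H^0(\bar F_t,\omega_{\bar F_t})\otimes H^0(\bar F_t,\omega_{\bar F_t}^{\otimes n-1})\to H^0(\bar F_t,\omega_{\bar F_t}^{\otimes n})$ on the Gorenstein curve $\bar F_t$ of arithmetic genus $4$. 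So one must show this map is onto for every $t$.

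\emph{The fibrewise step.} Here the hypothesis $\mathcal{T}_2=0$ re-enters: it says precisely that $\dim\bigl(\mathcal{L}_2\otimes k(t)\bigr)=1$ for all $t$, i.e. $\bar F_t$ spans $\mathbb{P}^3$ under $\omega_{\bar F_t}$ and lies on a unique hyperquadric $Q_t$, of rank $\ge 3$; in particular no fibre is honestly hyperelliptic. For such a curve I would invoke a Max Noether type theorem for $2$-connected Gorenstein curves (as used by Konno and in \cite{CP00},\cite{Tak12}): $\omega_{\bar F_t}$ is very ample and $\bar F_t\hookrightarrow\mathbb{P}^3$ is projectively normal. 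Comparing Hilbert functions ($\dim V_2\otimes k(t)=9$, $\dim V_3\otimes k(t)=15$) then exhibits $\bar F_t$ as the complete intersection of $Q_t$ with a cubic, whose homogeneous coordinate ring is generated in degree $1$; this yields the desired surjectivity.

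\emph{Main obstacle.} The essential difficulty is the fibrewise step over the non-smooth fibres: one has to check that $\mathcal{T}_2=0$ indeed forces every $\bar F_t$ to be reduced, $2$-connected and non-hyperelliptic (excluding multiple fibres and degenerations of $Q_t$) so that the Gorenstein Max Noether theorem applies, together with the routine bookkeeping that makes $V_n$ locally free with the base-change property and $\mathcal{L}_2$, $\mathcal{L}_3/\mu_{1,2}(V_1\otimes\mathcal{L}_2)$ line bundles of the expected degrees. The remaining steps are formal.
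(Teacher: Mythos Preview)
Your overall architecture---reduce ``generated in degree 1'' to the surjectivity of the fibrewise multiplication maps, and then read off the remaining assertions from the closed immersion $X=\mathrm{Proj}_B\mathcal{R}(f)\hookrightarrow\mathbb{P}(V_1)$---is exactly right and matches the paper. The gap you yourself flag is genuine, however, and the route you sketch for closing it (check that $\mathcal{T}_2=0$ forces each fibre to be reduced, $2$-connected and non-hyperelliptic, so that a Gorenstein Max Noether theorem applies) is not what the paper does; in fact your intermediate claim that $\mathcal{T}_2=0$ forces $\operatorname{rank}Q_t\ge 3$ does not obviously follow, since $\mathcal{T}_2=0$ only says the kernel of $S^2H^0(\omega_{F_t})\to H^0(\omega_{F_t}^{2})$ is one-dimensional, i.e.\ there is a \emph{unique} quadric, with no constraint on its rank.

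The paper sidesteps this entirely. First, it works with the fibres $F$ of $f\colon S\to B$ (so $F$ sits in the \emph{smooth} surface $S$), not with the fibres $\bar F_t$ of $X\to B$; this avoids any circularity and puts you in the setting of Konno's $1$-$2$-$3$ theorem for curves on smooth surfaces (\cite{Kon01}, Cor.~1.2.3). That theorem needs only $h^0(\mathcal{O}_F)=1$, $H^0(F,-mK_F)=0$, and $|K_F|$ base-point-free; the first two are automatic for a fibre of a genus $\ge 2$ fibration. The crucial point, which replaces your $2$-connectedness check, is the derivation of ``$|K_F|$ base-point-free for every fibre'' from $\mathcal{T}_2=0$: since $S$ is minimal of general type, $|2K_S+f^*\delta|$ is base-point-free for $\delta$ sufficiently ample, hence $|2K_F|$ is base-point-free; and surjectivity of $S^2H^0(K_F)\to H^0(2K_F)$ then forces $|K_F|$ itself to be base-point-free (any section of $2K_F$ nonvanishing at $p$ is a sum of products $s_it_i$ with $s_i,t_i\in H^0(K_F)$, so some $s_i(p)\ne 0$). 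Konno's theorem then gives surjectivity of $H^0(K_F)\otimes H^0((j-1)K_F)\to H^0(jK_F)$ for $j\ge 3$, and your induction finishes the proof.
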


 \begin{proof}[Sketch of Takahashi's proof]

(i) $|2K_S+f^*\delta|$  is base point free for a sufficiently ample divisor $\delta$ on $B$, hence  $\mathcal{T}_2=0$ implies that $|K_F|$ is base point free for any fibre $F$ of $f$.

(ii) Now we use  a result of Konno (see \cite{Kon01} Corollary 1.2.3, let $i=0$): assume that  $D$ is a curve  contained in a smooth surface  which satisfies $H^0(D, -mK_D)=0$ for any positive integer  $m$ and $h^0(D,\mathcal{O}_D)=1$. Set $g(D):=h^0(D,K_D)$.  If  $K_D$ is generated by its global sections, then the maps
$$H^0(D,K_D)\otimes H^0(D, (j-1)K_D)\rightarrow H^0(D,jK_D)$$
are surjective for $j\geq 4$, and also for $j=3$ when  $g(D)\geq 3$.

Here let $D=F$. Then we have  $g(F)=4$ and  all conditions above are satisfied for $F$.  Hence  $H^0(F,K_F)\otimes H^0(F, (j-1)K_F)\rightarrow H^0(F,jK_F)$  is surjective for $j\geq 3$.
Since $F$ is arbitrary, we see that $\sigma_j$ is surjective for all $j\geq 3$. Since   moreover we have assumed that $\sigma_2$ is surjective, we see that $\mathcal{R}(f)$  is generated in degree 1 and consequently $\Sigma$ is  isomorphic to its  relative canonical model.
\end{proof}

\begin{remark}   \label{g is begger than 3 R(f) generated in degree 2}

(i) The above proof works for  all (nonhyperelliptic) fibrations of genus $g\geq 3$:

 Let $S$ be a minimal surface of general type (we   need this condition to ensure that $|2K_S+f^*\delta|$  is base point free for a sufficiently ample divisor $\delta$ on $B$) and let  $f:S\rightarrow B$ be a   fibration of genus $g\geq 3$. If $\sigma_2: S^2(V_1)\rightarrow V_2$ is surjective,  then   $\mathcal{R}(f)$  is generated in degree 1 and $\Sigma$ is isomorphic to its  relative canonical model.

(ii) Let $S$ be a minimal  surface  of general type endowed  with a fibration $f: S\rightarrow B$.   Then its canonical model coincides with its relative canonical model except in the case where $B\cong \mathbb{P}^1$ and $f$ has a $(-2)$ curve mapped surjectively to $B$.
 \end{remark}

\subsection{Describing $\Sigma$ as a divisor on $Q$}

         Recall that we have the following two exact sequences induced by multiplication
         \begin{equation}  \label{exact sequence sigma 2}
            0\rightarrow \mathcal{L}_2 \stackrel{\beta_2} {\rightarrow} S^2(V_1)\stackrel{\sigma_2}{\rightarrow} V_2\rightarrow \mathcal{T}_2\rightarrow 0
         \end{equation}

          \begin{equation}  \label{exact sequence sigma 3}
            0\rightarrow \mathcal{L}_3 \stackrel{\beta_3} {\rightarrow}  S^3(V_1)\stackrel{\sigma_3}{\rightarrow} V_3\rightarrow \mathcal{T}_3\rightarrow 0
         \end{equation}

       Tensoring the exact sequence (\ref{exact sequence sigma 2})  with $V_1$  and using  the definitions of $\mu_{m,n}$ $\sigma_n$, we get the following commutative diagram
         $$\xymatrix    {0 \ar[r] & \mathcal{L}_2\otimes V_1 \ar[r]^{\beta_2\otimes id}  & S^2(V_1)\otimes V_1 \ar[r]^{\sigma_2\otimes id} \ar[d]^{\gamma} &V_2\otimes V_1 \ar[r] \ar[d]^{\mu_{2,1}} &\mathcal{T}_2\otimes V_1 \ar[r] &0 \\
         0 \ar[r] & \mathcal{L}_3 \ar[r]^{\beta_3} & S^3(V_1) \ar[r]^{\sigma_3} &V_3 \ar[r] & \mathcal{T}_3 \ar[r] & 0
         }$$
         where $\gamma$ is the natural surjection in $\sym(V_1)$.
           Set    $h:=\gamma \circ (\beta_2\otimes id): \mathcal{L}_2\otimes V_1\rightarrow S^3(V_1)$. Since $\sigma_3\circ h(\mathcal{L}_2\otimes V_1)=\mu_{2,1}\circ (\sigma_2\otimes id)\circ(\beta_2\otimes id)(\mathcal{L}_2\otimes V_1)=0$, we see  $h(\mathcal{L}_2\otimes V_1)\subset \beta_3(\mathcal{L}_3)$. Hence the map $h$ factors through $\mathcal{L}_3$, i.e. there is a (unique) map $j: \mathcal{L}_2\otimes V_1\rightarrow \mathcal{L}_3$ such that $\beta_3\circ j=h$.

      \begin{lemma}
      \label{injection  from L2 to L3}
          $j$ and $h$ are   injections.
       \end{lemma}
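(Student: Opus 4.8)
The plan is to deduce everything from a single injectivity check over the generic point of $B$. Since $\beta_3\colon\mathcal{L}_3\hookrightarrow S^3(V_1)$ is the inclusion of $\ker\sigma_3$, it is injective, and $\beta_3\circ j=h$; hence $j$ and $h$ have the same kernel, and it suffices to prove that $h=\gamma\circ(\beta_2\otimes\mathrm{id})$ is injective. Note first that $\mathcal{L}_2\otimes V_1$ is locally free (of rank $4$) on the smooth curve $B$: indeed $V_1=f_*\omega_{S/B}$ is locally free, and $\mathcal{L}_2=\ker\sigma_2$ is a torsion-free subsheaf of the locally free sheaf $S^2(V_1)$, hence locally free, of rank $1$ by the rank computation recalled above. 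Consequently $\ker h$ is a torsion-free sheaf on the integral curve $B$, so $\ker h=0$ as soon as $\ker h$ vanishes at the generic point $\xi\in B$.

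First I would localize the whole picture at $\xi$, writing $K:=\mathbb{C}(B)$. Localization is exact, so $0\to\mathcal{L}_2\otimes K\xrightarrow{\beta_2}S^2(V_1)\otimes K$ stays injective, and after choosing a $K$-basis of $V_1\otimes K$ the graded ring $\bigoplus_n S^n(V_1)\otimes K$ becomes the polynomial ring $R:=K[x_1,x_2,x_3,x_4]$, an integral domain. Unwinding the definitions of the multiplication maps, $\gamma\otimes K$ is ordinary multiplication $R_2\otimes_K R_1\to R_3$, and $(\beta_2\otimes\mathrm{id})\otimes K$ is the inclusion $\mathcal{L}_2\otimes K\subset R_2$ tensored with $R_1$. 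Because $\mathcal{L}_2$ has rank $1$, $\mathcal{L}_2\otimes K=K\cdot q$ for a single homogeneous quadratic form $q\in R_2$, which is nonzero since $\beta_2\otimes K$ is injective (geometrically, $q$ is the equation of the unique quadric containing the canonical genus-$4$ curve that is the generic fibre of $f$, which is nonzero by Max Noether's theorem since that fibre is nonhyperelliptic). Therefore $h\otimes K$ is the map $q\otimes\ell\mapsto q\ell$ from $K\cdot q\otimes_K R_1$ to $R_3$, and this is injective because $R$ is a domain and $q\neq 0$.

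It follows that $\ker h$ is a torsion-free $\mathcal{O}_B$-module vanishing at the generic point, hence $\ker h=0$; so $h$ is an injection, and then $\ker j=\ker h=0$ shows that $j$ is an injection as well. I do not anticipate a real obstacle: the argument is short, and the only point demanding a little care is verifying that, after restricting to the generic point (equivalently, to a general fibre $F$ of $f$), the composite $h$ is literally multiplication by the quadric $q$ in the symmetric algebra — i.e.\ correctly tracing $\beta_2\otimes\mathrm{id}$ and the symmetric-multiplication map $\gamma$ — together with the (already available) fact that $q\neq 0$.
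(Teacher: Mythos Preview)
Your argument is correct and follows essentially the same route as the paper: both reduce to checking injectivity over a general (or generic) point of $B$, then use that the kernel is a torsion subsheaf of the locally free sheaf $\mathcal{L}_2\otimes V_1$ and hence zero. The only cosmetic differences are that the paper restricts to a general closed point $p$ and phrases the fibrewise check geometrically (as the injection $I_2\otimes H^0(\mathcal{O}_{F'_p}(1))\hookrightarrow I_3$ for the canonical curve $F'_p\subset\mathbb{P}^3$), and it proves $j$ injective first and then deduces $h=\beta_3\circ j$ is injective, whereas you work over the generic point, phrase the check algebraically as multiplication by $q$ in the polynomial ring, and observe $\ker j=\ker h$ at the outset.
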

      \begin{proof}
            First we show that $j$ is generically injective.  Taking a general point $p\in B$,  the  fibre $F_p:=f^{-1}(p)$   is a smooth nonhyperelliptic curve of genus 4, and its canonical image $F'_p$ is a complete intersection of type $(2,3)$ in $\mathbb{P}^3$. Let $I$ be the ideal sheaf of $F_p'$ in $\mathbb{P}^3$ and set $I_n:=I(n)$.
            Then   the  restriction of   $j$ to   the point  $p$  is exactly the map  $I_2\otimes H^0(F_p',\mathcal{O}_{F'_p}(1))\rightarrow I_3$, which is injective. Hence  $j$ is generically injective, and $\kernel (j)$  is a torsion sheaf.

            Since moreover $\kernel(j)$ is a subsheaf of the locally free sheaf $\mathcal{L}_2\otimes V_1$, it must be 0. Therefore $j$ is injective.
 Since $h=\beta_3\circ j$ and $\beta_3$ and $j$ are both injections, $h$    is also an injection.
            \end{proof}
         Taking the quotient of (\ref{exact sequence sigma 3})   by $\mathcal{L}_2\otimes V_1$, we get the following exact sequence

         \begin{equation}  \label{exact sequence sigma 3'}
               0\rightarrow \mathcal{L}_3/(\mathcal{L}_2\otimes V_1) \stackrel{\beta_3} {\rightarrow}  S^3(V_1)/(\mathcal{L}_2\otimes V_1) \stackrel{\sigma_3}{\rightarrow} V_3\rightarrow \mathcal{T}_3\rightarrow 0
         \end{equation}

         \begin{lemma} \label{coherent sheaf on a curve splitting}
                   Let $\mathcal{F}$ be a coherent sheaf on a smooth irreducible curve $C$. Then we can write $\mathcal{F}=\mathcal{F}_l\oplus \mathcal{F}_t$, where $\mathcal{F}_l=\mathcal{F}^{\vee \vee}$  is a locally free sheaf and $\mathcal{F}_t$   is a torsion sheaf on $C$.
         \end{lemma}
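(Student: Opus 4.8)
The plan is to invoke the standard local structure theory of coherent sheaves over a smooth curve. First I would let $\mathcal{F}_t\subset\mathcal{F}$ be the torsion subsheaf, i.e. the subsheaf whose local sections are those annihilated by a nonzero regular function; since $C$ is Noetherian and integral this is a coherent subsheaf, and it sits in a short exact sequence
\[
0\to \mathcal{F}_t\to \mathcal{F}\to \mathcal{G}\to 0,\qquad \mathcal{G}:=\mathcal{F}/\mathcal{F}_t,
\]
with $\mathcal{G}$ torsion-free. The next step is to observe that $\mathcal{G}$ is locally free: for each closed point $x\in C$ the local ring $\mathcal{O}_{C,x}$ is a discrete valuation ring, hence a principal ideal domain, so the finitely generated torsion-free module $\mathcal{G}_x$ is free; local freeness at every point of a Noetherian scheme gives local freeness of the sheaf.

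Second, I would show that the sequence splits. Since $\mathcal{G}$ is locally free one has $\mathcal{E}xt^1_{\mathcal{O}_C}(\mathcal{G},\mathcal{F}_t)=0$, so the local-to-global spectral sequence yields $\ext^1_{\mathcal{O}_C}(\mathcal{G},\mathcal{F}_t)\cong H^1\!\big(C,\mathcal{H}om_{\mathcal{O}_C}(\mathcal{G},\mathcal{F}_t)\big)=H^1(C,\mathcal{G}^{\vee}\otimes\mathcal{F}_t)$. But $\mathcal{G}^{\vee}\otimes\mathcal{F}_t$ is a torsion sheaf, hence supported on a finite (affine) closed subscheme of $C$, so its first cohomology vanishes. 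Therefore $\ext^1_{\mathcal{O}_C}(\mathcal{G},\mathcal{F}_t)=0$, the extension is trivial, and $\mathcal{F}\cong\mathcal{F}_t\oplus\mathcal{G}$.

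Finally I would identify $\mathcal{G}$ with $\mathcal{F}^{\vee\vee}$. Applying $\mathcal{H}om_{\mathcal{O}_C}(-,\mathcal{O}_C)$ to the sequence and using $\mathcal{H}om_{\mathcal{O}_C}(\mathcal{F}_t,\mathcal{O}_C)=0$ (a torsion sheaf admits no nonzero map to the torsion-free sheaf $\mathcal{O}_C$) gives $\mathcal{F}^{\vee}\cong\mathcal{G}^{\vee}$; dualizing once more and using that the locally free sheaf $\mathcal{G}$ is reflexive gives $\mathcal{F}^{\vee\vee}\cong\mathcal{G}^{\vee\vee}\cong\mathcal{G}$. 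Setting $\mathcal{F}_l:=\mathcal{G}=\mathcal{F}^{\vee\vee}$ and $\mathcal{F}_t$ as above yields the asserted decomposition.

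I do not expect a serious obstacle: the only point needing a little care is the vanishing of $\ext^1_{\mathcal{O}_C}(\mathcal{G},\mathcal{F}_t)$ that produces the splitting, and this follows cleanly once one notes that $\mathcal{G}$ is locally free (killing the local $\mathcal{E}xt$-sheaf term) and $\mathcal{F}_t$ is a torsion sheaf on a curve (killing the $H^1$ term). Everything else is routine structure theory of finitely generated modules over a Dedekind ring.
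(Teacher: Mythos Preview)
Your proof is correct and follows essentially the same approach as the paper: both use that stalks over a smooth curve are PIDs to separate torsion from locally free, and both split the resulting short exact sequence via $\ext^1(\mathcal{G},\mathcal{F}_t)\cong H^1(C,\mathcal{G}^{\vee}\otimes\mathcal{F}_t)=0$ for a torsion sheaf on a curve. The only cosmetic difference is the order: the paper begins with the natural map $u:\mathcal{F}\to\mathcal{F}^{\vee\vee}$ and checks stalkwise that it is surjective with torsion kernel, whereas you first take the torsion subsheaf, prove the quotient is locally free, and then identify it with $\mathcal{F}^{\vee\vee}$ at the end.
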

            \begin{proof}
              Let $\mathcal{F}^{\vee}:=\mathcal{H}om(\mathcal{F}, \mathcal{O}_C)$ be the dual sheaf of $\mathcal{F}$. Then $\mathcal{F}^{\vee \vee}$ is a locally free sheaf and there is a natural map $u: \mathcal{F}\rightarrow \mathcal{F}^{\vee \vee}$.

               Let $p$ be a point on $C$ and restrict $u$ to $p$, we get  $u|_p: \mathcal{F}_p\rightarrow \mathcal{F}_p^{\vee \vee}$.  Since $C$ is a smooth irreducible curve, $(\mathcal{O}_C)_p$ is a principle ideal domain. Hence $\mathcal{F}_p$ can be written as a direct sum of free modules and torsion modules, and the direct sum of such free modules are nothing but $\mathcal{F}_p^{\vee \vee}$.
               In particular, $u|_p$ is an surjection  at each point $p\in C$ and $u|_p$ is an isomorphism for a general point $p\in C$. Hence $u$ is surjective and $\kernel(u)$ is a torsion sheaf.

               Set  $\mathcal{F}_t:=\kernel(u)$. Then we have the following short exact sequence
               $$0\rightarrow \mathcal{F}_t \rightarrow \mathcal{F}\rightarrow \mathcal{F}^{\vee \vee} \rightarrow 0.$$
               Since $\ext^1(\mathcal{F}^{\vee \vee}, \mathcal{F}_t)\cong H^1(C,\mathcal{F}_t \otimes \mathcal{F}^{\vee})=0$ (the support of $\mathcal{F}_t \otimes \mathcal{F}^{\vee}$ is of dimension 0), we have $\mathcal{F}=\mathcal{F}^{\vee \vee}\oplus \mathcal{F}_t$.
               \end{proof}

   Now we  write $\mathcal{L}_3/(\mathcal{L}_2\otimes V_1)=\mathcal{L}_3'\oplus \mathcal{T}$ with $\mathcal{T}$ its torsion part, and     write $S^3(V_1)/(\mathcal{L}_2\otimes V_1)=\tilde{V}_3\oplus \mathcal{T}'$ with $\mathcal{T}'$ its torsion part.    From the exact sequence     (\ref{exact sequence sigma 3'}), it is easy to see that $\mathcal{T}=\mathcal{T}'$. Hence we get the following exact sequence

   \begin{equation}  \label{exact sequence sigma 3' modulo torsion}
               0\rightarrow \mathcal{L}'_3 \stackrel{\beta_3} {\rightarrow}   \tilde{V}_3 \stackrel{\sigma_3}{\rightarrow} V_3\rightarrow \mathcal{T}_3\rightarrow 0.
         \end{equation}
        The injection $\mathcal{L}'_3 \hookrightarrow   \tilde{V}_3$   defines a section in $H^0(B, \tilde{V}_3\otimes (\mathcal{L}_3')^{-1})\cong H^0(Q, \mathcal{O}_Q(3)\otimes \pi_Q^*(\mathcal{L}_3')^{-1})$ (here $\pi_Q:=\pi|_Q$), hence defines a divisor $Y$ on $Q$. As in section 3.1, it is easy to see that $Y$ contains an open subset of $\Sigma$, hence $Y$ contains $\Sigma$.

      \begin{remark}    \label{Y equas Sigma}
      As in  \cite{CP00} Theorem 2.5, we  have   $Y=\Sigma$:

        Let $H$ be a fibre of $\pi:\mathbb{P}(V_1)\rightarrow B$. If $Y\cap H$ is a curve, it has the same degree  as $\Sigma\cap H$ and it contains  $\Sigma\cap H$, hence they coincide. So, if $Y\neq \Sigma$, then there exists a fibre $H$ such that $H\cap Y$ is a component of $H\cap Q$.

        If $Q\cap H=Y\cap H$, we get some torsion element in the cokernel of $\mathcal{L}_3\rightarrow S^3(V_1)$, which is a subsheaf of the locally free sheaf $V_3$, a contradiction. If  $Q\cap H\neq Y\cap H$, then  $Q\cap H$ is the union of two plans and $Y\cap H$  is one of them. Since $Y\cap H$ contains $\Sigma\cap H$  and $\Sigma\cap H$ is a nondegenerate curve in $H\cong \mathbb{P}^3$, we get a contradiction.  Therefore we have  $Y=\Sigma$.
       \end{remark}

\section{The case $p_g=q=1$ and $K^2=4$}
  Now we apply the general theories in section 3 to our case. We use  the same notation as  in section 3. Due to technical reasons, we assume further that $V_1=f_*\omega_S(=f_*\omega_{S/B})$ is decomposable.
The main theorem in this section is  the following

   \begin{theorem} \label{Sigma is contained in the family}
    Let $S$ be a   minimal surface  with $p_g=q=1,K^2=4$ and genus 4 nonhyperelliptic Albanese fibration $f: S\rightarrow B=Alb(S)$
     such  that $V_1=f_*\omega_S$ is decomposable. Then  $S$ is isomorphic to a surface in the family $M$  (cf. Definition \ref{defn of family M}).
 \end{theorem}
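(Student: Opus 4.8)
The plan is to reconstruct the canonical model of $S$ from the relative canonical algebra $\mathcal R(f)$ and then recognise it as a member of $M'$. First I record the numerical data: $b=g(B)=1$ and $p_g=q=1$ give $\chi(\mathcal O_S)=1$, hence $\chi(S/B)=1$ and $K_{S/B}^2=K_S^2=4$; thus $\rank V_1=4$, $\deg V_1=1$, $\rank\mathcal L_2=1$, $\rank\mathcal L_3=5$. Moreover $V_1$ is nef (Fujita semipositivity) and $h^0(V_1)=p_g=1$ (so also $h^1(V_1)=0$).

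Next I would prove $\mathcal T_2=0$. Max Noether's theorem makes $S^2H^0(\omega_F)\to H^0(\omega_F^{\otimes 2})$ surjective on a general fibre $F$, so $\mathcal T_2$ is torsion and $\deg\mathcal L_2=\deg\mathcal T_2\ge0$; on the other hand, since $V_1$ is decomposable and nef, the slope structure of $S^2V_1$ shows that any sub-line-bundle of $S^2V_1$ of positive degree would force $Q$ to restrict on a general fibre to a quadric of rank $\le2$, impossible for a quadric through a non-degenerate genus $4$ canonical curve in $\mathbb P^3$; hence $\deg\mathcal L_2=0$ and $\mathcal T_2=0$. By Lemma~\ref{Takahashi's lemma generated in degree 1} the relative canonical map $w\colon S\to\mathbb P(V_1)$ is then a morphism, $\mathcal R(f)$ is generated in degree $1$, and $\Sigma=w(S)$ is its relative canonical model; since $B\not\cong\mathbb P^1$, Remark~\ref{g is begger than 3 R(f) generated in degree 2}(ii) identifies $\Sigma$ with the canonical model of $S$, which therefore has at worst rational double points. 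By Sections~3.1--3.2 and Remark~\ref{Y equas Sigma}, $\Sigma=Q\cap X$ with $Q\in|2T+\pi^*\mathcal L_2^{-1}|$ the relative hyperquadric and $X\in|3T+\pi^*(\mathcal L_3')^{-1}|$ a relative hypercubic lifting the divisor $Y=\Sigma$ on $Q$ (the lift exists since, $\mathcal R(f)$ being generated in degree $1$, $\Sigma$ is cut out in $\mathbb P(V_1)$ by its equations of degree $\le3$, which are spanned by $q$ and one cubic).

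The heart of the argument — and the main obstacle — is to determine $V_1,\mathcal L_2,\mathcal L_3'$. Adjunction on $\mathbb P(V_1)$ gives $K_\Sigma=(K_{\mathbb P(V_1)}+Q+X)|_\Sigma=\bigl(T+\pi^*(\det V_1\otimes\mathcal L_2^{-1}\otimes(\mathcal L_3')^{-1})\bigr)|_\Sigma$; comparing with $K_\Sigma=K_S\equiv T|_\Sigma$ (as $K_B=\mathcal O_B$ and $w$ is the relative canonical map) and using the injectivity of $f^*\colon\mathrm{Pic}(B)\hookrightarrow\mathrm{Pic}(\Sigma)$ yields the key relation $\det V_1\cong\mathcal L_2\otimes\mathcal L_3'$; in particular $\deg\mathcal L_3'=1$, and $K_S^2=T^2\cdot Q\cdot X=6+2\deg(\mathcal L_3')^{-1}+3\deg\mathcal L_2^{-1}=4$ is consistent only with $\deg\mathcal L_2=0$. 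Since $V_1$ is nef, decomposable, of rank $4$ and degree $1$ with $h^0(V_1)=1$, nefness forces a unique indecomposable summand $V'\cong E_{[p]}(r,1)$ of degree $1$, $1\le r\le3$, with all other summands of degree $0$, and $h^0(V_1)=1$ forces each degree-$0$ summand to be without sections (so $\mathcal O_B$ is not a summand). The delicate step is to exclude $r=1$ and $r=2$: analysing $\mathcal L_2\hookrightarrow S^2V_1$ via~(\ref{exact sequence sigma 2}) under the rank-$\ge3$ condition on the general-fibre quadric, and $\mathcal L_3'\hookrightarrow\tilde V_3$ via~(\ref{exact sequence sigma 3' modulo torsion}) under the requirement that $Q\cap X$ be a canonical surface with $K^2=4$, rules these cases out (they would force a general-fibre quadric of rank $\le2$, or $\Sigma$ singular along a section of $\pi$). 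Hence $r=3$, i.e.\ $V_1\cong E_{[p]}(3,1)\oplus N$ with $\deg N=0$ and $h^0(N)=0$, so $N\not\cong\mathcal O_B$; a further analysis — pulling back along an isogeny that trivialises $N$ (as in the lemma preceding Lemma~\ref{bpf}) and comparing $\mathcal L_2\hookrightarrow S^2V_1$ and the relative cubic $X$ with the explicit split data of $S^2E_{[p]}(3,1)$ — forces $N$ (equivalently $\mathcal L_2$) to be $3$-torsion, which with $\det V_1\cong\mathcal L_2\otimes\mathcal L_3'$ puts $N,\mathcal L_2,\mathcal L_3'$ in exactly the torsion pattern of Section~2.

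Finally I would match $\bigl(\mathbb P(V_1),|2T+\pi^*\mathcal L_2^{-1}|,|3T+\pi^*(\mathcal L_3')^{-1}|\bigr)$ with $\bigl(W,|2T+H_\eta-H_0|,|3T-H_\tau|\bigr)$ from Definition~\ref{defn of family M}: choosing the group law on $B$ so that $\eta$ is the order-$3$ point with $\mathcal O_B(\eta-0)\cong\mathcal L_2^{-1}$, the relation $\det V_1\cong\mathcal L_2\otimes\mathcal L_3'$ and $3\eta\equiv 3\cdot 0$ give $\tau\equiv 2\cdot 0-\eta$ and make the two linear systems coincide. Then $S':=\Sigma$ is a member of $M'$, and $S$, its minimal resolution, lies in $M$. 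I expect the classification of $V_1$ in the previous paragraph — excluding $\rank V'\in\{1,2\}$ and extracting the $3$-torsion from $\mathcal R(f)$ — to be the step requiring the most work; establishing $\mathcal T_2=0$ is comparatively routine.
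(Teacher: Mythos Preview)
Your overall architecture matches the paper's: pin down $V_1,\mathcal L_2,\mathcal L_3'$, show $\Sigma=Q\cap X$ in $\mathbb P(V_1)$, and read off the linear systems via adjunction. The adjunction relation $\det V_1\cong\mathcal L_2\otimes\mathcal L_3'$ you derive is exactly Lemma~\ref{N is of order 3}. But there is a genuine gap at the step you yourself flag as hardest: determining $V_1$. The paper does \emph{not} do this by a case analysis on $r=\rank V'$; it invokes Barja--Zucconi \cite{BZ00} Theorem~2, which, from $K_{S/B}^2/\chi(S/B)=4$, bounds the number of indecomposable summands of $V_1$ by $2$ and hence forces $V_1\cong E_{[0]}(3,1)\oplus N$ immediately (Lemma~\ref{BZ00 iota}). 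Your proposed exclusion of $r\in\{1,2\}$ via ``rank $\le 2$ quadric on the general fibre'' does not go through once you already know $\deg\mathcal L_2=0$: a degree-$0$ line bundle can map nontrivially into \emph{any} summand of $S^2V_1$, so nothing forces the quadric to lie in few variables; the quadric-rank argument only bites when $\deg\mathcal L_2>0$. Likewise the claim that $r\le 2$ would make $\Sigma$ singular along a section is not substantiated and is in tension with Corollary~\ref{Sigma is canonical model}. You should replace this whole paragraph by the one-line appeal to \cite{BZ00}, after which the paper's slope bound $\maxslope S^2V_1=\tfrac{2}{3}$ gives $\mathcal T_2=0$ directly.

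Two further points where your sketch diverges from what is actually needed. First, the identification $\mathcal L_2\cong N^{\otimes 2}$ is not automatic and is obtained in the paper by a cohomological trick (Lemma~\ref{T2 is 0}): one shows $h^1(V_2\otimes N^{-2})=0$ by Riemann--Roch on $S$, forcing $h^1(\mathcal L_2\otimes N^{-2})\ge 1$ and hence $\mathcal L_2\otimes N^{-2}\cong\mathcal O_B$. Second, your lift of the cubic from $Q$ to $\mathbb P(V_1)$ (``$\Sigma$ is cut out by $q$ and one cubic'') is precisely Proposition~\ref{Sigma is complete intersection}, and it rests on the splitting of the sequences (\ref{exact sequence j}) and (\ref{exact sequence h}) proved in Lemma~\ref{exact sequence j h split}; that splitting is a genuine computation using the Atiyah decomposition of $S^3(E_{[0]}(3,1))$ and is not a formal consequence of $\mathcal R(f)$ being generated in degree $1$. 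Once (\ref{exact sequence h}) splits, the explicit form (\ref{tilde of V3}) of $\tilde V_3$ pins $\mathcal L_3'$ down to $\mathcal O_B(0)$ or $\mathcal M_i(0)$ by slopes (Remark~\ref{another proof for T3 is 0}(ii)); combined with your adjunction relation this gives $N\cong\mathcal M_i$, i.e.\ $N$ is $3$-torsion---no isogeny pullback is needed.
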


    From now on, we always assume the following condition:

$(\star)$   $S$ is a minimal surface with $p_g=q=1,K^2=4$ and a genus 4 nonhyperelliptic Albanese fibration $f: S\rightarrow B=Alb(S)$ such that $V_1=f_*\omega_S$ is decomposable.

\vspace{2ex}
Since  $\deg V_1=1$ and  $\rank V_1=g$,   $V_1$ has a unique decomposition into indecomposable vector bundles $V_1=\bigoplus _{i=1}^kW_i$ with $\deg W_1=1$ and    $\deg W_i=0, H^0(W_i)=0$ $(2\leq i\leq k)$ (cf. \cite{CC91} p. 56).

   \begin{lemma} \label{BZ00 iota}
   Under   assumption $(\star)$ and  up to a translation of $B$, we can assume  $V_1=E_{[0]}(3,1)\oplus N$ with $N$ a nontrivial torsion line bundle   over $B$.
\end{lemma}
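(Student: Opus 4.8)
The plan is to combine the numerical constraints on $V_1 = f_*\omega_S$ coming from $p_g = q = 1$, $K^2 = g = 4$ with the structure theory of vector bundles on elliptic curves (Atiyah). First I would record the basic invariants: since $q = 1$, the base $B = \mathrm{Alb}(S)$ is an elliptic curve, so $b = 1$ and $\chi(S/B) = \chi(\mathcal{O}_S) - (g-1)(b-1) = \chi(\mathcal{O}_S) = 1$. Hence $\deg V_1 = \chi(S/B) = 1$ and $\rank V_1 = g = 4$. Also $h^0(B, V_1) = h^0(S, \omega_S) = p_g = 1$ and $h^1(B, V_1) = h^0(S, \mathcal{O}_S) = 1$ (by Leray and the fact that $R^1 f_* \omega_{S/B}$ contributes the $\mathcal{O}_B$ summand, or directly $h^1(V_1) = h^1(\mathcal{O}_S) - h^1(\mathcal{O}_B) + \dots$; I would cite \cite{CC91} here for the precise bookkeeping). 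By Fujita's semipositivity $V_1$ is nef, so every indecomposable summand has nonnegative degree.

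Next I would invoke the decomposition recalled just before the lemma: since $V_1$ is assumed decomposable, $\deg V_1 = 1$, and $V_1$ is nef, one writes $V_1 = \bigoplus_{i=1}^k W_i$ with $\deg W_1 = 1$, $H^0(W_1) = 1$ and $\deg W_i = 0$, $H^0(W_i) = 0$ for $2 \le i \le k$ (this is \cite{CC91} p.~56). By Atiyah's classification, a degree-$0$ indecomposable bundle on an elliptic curve with no sections is either $E_{[0]}(r,0)$ with $r \ge 2$ (the self-extensions of $\mathcal{O}_B$) — but those do have a section — or of the form $E_p(r,0) \otimes (\text{torsion line bundle})$ with the twisting line bundle nontrivial; in rank $1$ this is just a nontrivial torsion (more generally, degree-$0$ non-effective) line bundle. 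The degree-$1$ summand $W_1$ with exactly one section: among indecomposable bundles of degree $1$ on $B$, $h^0 = \deg = 1$ automatically by Riemann--Roch plus indecomposability (all $E_p(r,1)$ have $h^0 = 1$); the line bundle case $\mathcal{O}_B(p)$ also has $h^0 = 1$.

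Then I would run the case analysis on the rank partition $(\rank W_1, \rank W_2, \dots)$ summing to $4$, and eliminate all but $(3,1)$. The tool for elimination is the relative canonical algebra constraints from Section 3 together with the nonhyperellipticity and the geometry of the relative canonical image: a too-split $V_1$ forces the relative canonical map to factor through a subbundle, contradicting that the general fibre is a nondegenerate (nonhyperelliptic) canonical curve of genus $4$ spanning $\mathbb{P}^3$; in particular $V_1$ must have no line subbundle of degree $\ge 1$ splitting off too much, and $h^0(V_1) = 1$ rules out having two summands with sections. Concretely, writing $V_1 = W_1 \oplus (\bigoplus_{i \ge 2} W_i)$, the degree-$1$ part $W_1$ cannot itself be a line bundle $\mathcal{O}_B(p)$: then $V_1 = \mathcal{O}_B(p) \oplus (\text{three degree-}0\text{ summands})$, and one checks via the $\sigma_2$, $\sigma_3$ maps (ranks $\rank \mathcal{L}_2 = 1$, $\rank \mathcal{L}_3 = 5$ from Section 3) that $\deg V_2$, $\deg V_3$ come out inconsistent with $K_{S/B}^2 = 4$, $\chi(S/B) = 1$ — this is the computation I expect to be the main obstacle, since one must show the unique rank-$3$ indecomposable degree-$1$ bundle $E_{[0]}(3,1)$ is forced rather than, say, $\mathcal{O}_B(p) \oplus E_p(2,0) \oplus (\text{torsion})$ or $\mathcal{O}_B(p) \oplus \mathcal{O}_B^{\oplus 3}$ (the latter killed by $h^1(V_1) = 1$). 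Once $W_1$ must be the rank-$3$ bundle $E_{[0]}(3,1)$ (after translating $B$ so its determinant point is the origin), the remaining rank-$1$ summand has degree $0$ and no sections, hence is a nontrivial degree-$0$ line bundle $N$; finally, a further argument (which I expect is deferred to a later lemma, or follows from the relative hyperquadric $Q$ being irreducible on the general fibre) pins $N$ down to be $3$-torsion, but for the present lemma it suffices to conclude $V_1 = E_{[0]}(3,1) \oplus N$ with $N$ nontrivial torsion, which is the stated claim. I would close by noting the translation of $B$ used to normalize $\det E_{[0]}(3,1) = \mathcal{O}_B([0])$ is harmless since it does not affect $S$ up to isomorphism.
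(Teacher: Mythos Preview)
Your approach diverges from the paper's and has two real gaps.

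The paper's proof is essentially two citations. From $K_{S/B}^2/\chi(S/B)=4$ it invokes \cite{BZ00} Theorem~2 (the Barja--Zucconi bound) to conclude that $V_1$ has at most two indecomposable summands; together with the hypothesis that $V_1$ is decomposable and the Catanese--Ciliberto splitting recalled just before the lemma, this forces $V_1=E_{[0]}(3,1)\oplus N$ with $\deg N=0$ (up to a translation of $B$). It then cites \cite{CP06} Remark~2.10 to conclude that $N$ is torsion.

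You try to replace the Barja--Zucconi input by a direct case analysis on rank partitions, but your proposed elimination criterion cannot work: you claim that for a bad partition ``$\deg V_2$, $\deg V_3$ come out inconsistent with $K_{S/B}^2=4$, $\chi(S/B)=1$'', yet $\deg V_n=\tfrac{1}{2}n(n-1)K_{S/B}^2+\chi(S/B)$ is determined by $K_{S/B}^2$ and $\chi(S/B)$ alone and is completely insensitive to how $V_1$ splits. Similarly, nondegeneracy of the general fibre in $\mathbb{P}^3$ only says that $V_1$ has rank~$4$ fibrewise, which is automatic for any rank-$4$ bundle and does not constrain the splitting type. A case analysis could in principle succeed, but it requires the Xiao-type slope estimate that underlies \cite{BZ00}, not the degrees of the $V_n$; you have not supplied this and you flag the step yourself as ``the main obstacle''.

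Second, you slide from ``$N$ is a nontrivial degree-$0$ line bundle'' to ``$N$ is a nontrivial torsion line bundle'' without argument, and earlier you parenthetically equate ``nontrivial torsion'' with ``degree-$0$ non-effective'', which is false on an elliptic curve. That $N$ has finite order in $\mathrm{Pic}^0(B)$ is a genuine extra input --- the paper takes it from \cite{CP06} Remark~2.10, ultimately resting on the unitary flat structure of the degree-$0$ part of $f_*\omega_{S/B}$ --- and cannot be read off from Atiyah's classification alone. (Incidentally, $h^1(V_1)=0$, not $1$: Riemann--Roch on $B$ gives $\chi(V_1)=\deg V_1=1=h^0(V_1)$.)
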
 	
\begin{proof}
	Since $K_{S/B}^2=K_S^2=4$ and $\chi(S/B)=\chi(\mathcal{O}_S)-(g-1)(g(B)-1)=1$, we see  $\frac{K_{S/B}^2}{\chi(S/B)}=4$. By \cite{BZ00} Theorem 2, the number $k$ of direct summands of $V_1$ is   at most 2. Since we have assumed that $V_1$ is decomposable, we have $k=2$. Hence up to a translation of $B$, we can write
$V_1=E_{[0]}(3,1)\oplus N$ with $N$ a degree 0 line bundle over $B$. By \cite{CP06} Remark 2.10 and $h^0(N)=0$, we see that $N$ is a nontrivial torsion line bundle.
 \end{proof}

 \begin{lemma}    \label{T2 is 0}
  Under   assumption $(\star)$, we have $\deg \mathcal{L}_2=0$ and $\mathcal{T}_2=0$. Hence we have the following short exact sequence
    \begin{equation}  \label{short exact sequence sigma 2}
            0\rightarrow \mathcal{L}_2 \stackrel{\beta_2} {\rightarrow} S^2(V_1)\stackrel{\sigma_2}{\rightarrow} V_2\rightarrow 0
         \end{equation}
         Moreover we have $\mathcal{L}_2\cong N^{\otimes 2}$.
 \end{lemma}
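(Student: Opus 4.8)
\textbf{Proof proposal for Lemma \ref{T2 is 0}.}

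The plan is to compute degrees and ranks using the formulas recalled in Section 3, and then use the decomposability of $V_1$ (from Lemma \ref{BZ00 iota}) to pin down $\mathcal{L}_2$ exactly. First I would recall from Section 3 that $\rank \mathcal{L}_2 = 1$ (so $\mathcal{L}_2$ is a line bundle), and that $\deg S^2(V_1) = \binom{5}{4}\chi(S/B) = 5$ while $\deg V_2 = \frac12\cdot 2\cdot 1\cdot K_{S/B}^2 + \chi(S/B) = K_{S/B}^2 + 1 = 5$. From the four-term exact sequence \eqref{exact sequence sigma 2} one gets $\deg \mathcal{L}_2 = \deg S^2(V_1) - \deg V_2 + \deg \mathcal{T}_2 = \deg \mathcal{T}_2 \geq 0$, with equality iff $\mathcal{T}_2 = 0$. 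So the whole point is to show $\deg \mathcal{T}_2 = 0$.

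Next I would produce the line bundle $\mathcal{L}_2$ explicitly. By Lemma \ref{BZ00 iota} we may assume $V_1 = E_{[0]}(3,1) \oplus N$ with $N$ a nontrivial torsion line bundle; write $E := E_{[0]}(3,1)$. Then $S^2(V_1) = S^2(E) \oplus (E \otimes N) \oplus N^{\otimes 2}$. Using Atiyah's classification, $S^2(E)$ is a direct sum of indecomposable bundles of rank $\geq 2$ and positive degree (in fact $\deg S^2(E) = 4$, $\rank S^2(E) = 6$, and each summand has positive slope since $E$ is stable-like of slope $1/3$), while $E \otimes N$ has rank $3$ and degree $1$ with every indecomposable summand of positive degree. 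Hence the only rank-$1$ degree-$0$ sub-line-bundle candidate living in $S^2(V_1)$ is $N^{\otimes 2}$: any nonzero map from a degree-$0$ line bundle into $S^2(E)$ or $E\otimes N$ would have to hit a quotient of nonpositive degree, which is impossible because all indecomposable summands of those bundles have positive degree (a sub-line-bundle of degree $0$ of an indecomposable bundle of positive degree and rank $\geq 2$ would destabilize it below its slope only if... — more simply: $\Hom(L, G) \neq 0$ with $\deg L = 0$ and $G$ indecomposable of positive degree forces... one checks via $H^0(G \otimes L^{-1})$ and Atiyah's description that the image is not a sub-line-bundle of degree $0$). The clean way: $\mathcal{L}_2 \hookrightarrow S^2(V_1)$ is a sub-bundle (saturated, being the kernel of $\sigma_2$ modulo torsion — actually $\mathcal{L}_2 = \ker \sigma_2$ is already saturated since $S^2(V_1)/\mathcal{L}_2$ injects into $V_2$ which is locally free, so $S^2(V_1)/\mathcal{L}_2$ is torsion-free hence locally free). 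A saturated sub-line-bundle of $S^2(V_1)$ of degree $\geq 0$ must, by projecting to the summands and using that $S^2(E)$ and $E\otimes N$ have all Atiyah summands of positive degree while $N^{\otimes 2}$ has degree $0$, map isomorphically onto $N^{\otimes 2}$; in particular $\deg \mathcal{L}_2 = 0$, which forces $\deg \mathcal{T}_2 = 0$ and hence $\mathcal{T}_2 = 0$, and simultaneously gives $\mathcal{L}_2 \cong N^{\otimes 2}$.

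Then the short exact sequence \eqref{short exact sequence sigma 2} is immediate: with $\mathcal{T}_2 = 0$ the four-term sequence \eqref{exact sequence sigma 2} collapses to $0 \to \mathcal{L}_2 \to S^2(V_1) \to V_2 \to 0$. I expect the main obstacle to be the bundle-theoretic step: rigorously ruling out that $\mathcal{L}_2$ could meet the summands $S^2(E)$ or $E\otimes N$ nontrivially, i.e. showing the only degree-$0$ saturated line subbundle direction is $N^{\otimes 2}$. This needs a careful application of Atiyah's classification of bundles on elliptic curves — specifically that $S^2(E_{[0]}(3,1))$ and $E_{[0]}(3,1)\otimes N$ decompose into indecomposables all of which have strictly positive degree, so that any map from a degree-$0$ line bundle into them lands in a proper subsheaf of lower-than-full rank and cannot be a saturated line subbundle of the same degree. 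Everything else (the rank and degree bookkeeping, and the final collapse of the exact sequence) is routine.
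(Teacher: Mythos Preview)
Your degree bookkeeping and the upper bound $\deg\mathcal{L}_2\le 0$ (via $\maxslope(S^2(V_1))=2/3<1$) are correct and match the paper's argument for $\mathcal{T}_2=0$. The gap is in the identification $\mathcal{L}_2\cong N^{\otimes 2}$, precisely where you sensed trouble.

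Your claim that no degree-$0$ line bundle can map nontrivially into $S^2(E)$ or $E\otimes N$ is false. One has $S^2(E_{[0]}(3,1))\cong E_{[0]}(3,2)^{\oplus 2}$ (cf.\ \cite{CP06} p.~1035, used later in the paper), and for \emph{every} degree-$0$ line bundle $L$ on $B$ Atiyah's Lemma~15 gives $h^0(E_{[0]}(3,2)\otimes L^{-1})=2$ and $h^0(E_{[0]}(3,1)\otimes N\otimes L^{-1})=1$. Moreover, since $E_{[0]}(3,2)$ is stable of slope $2/3$, the image of any such nonzero map $L\to E_{[0]}(3,2)$ is already a saturated sub-line-bundle. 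Thus \emph{every} degree-$0$ line bundle on $B$ arises as a saturated sub-line-bundle of $S^2(V_1)$, and pure bundle theory on $B$ cannot single out $N^{\otimes 2}$ among them.

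The paper gets the missing information from the surface $S$. Twisting the short exact sequence by $N^{-2}$ and taking cohomology, the map $H^1(\mathcal{L}_2\otimes N^{-2})\to H^1(S^2(V_1)\otimes N^{-2})$ is surjective because $h^1(V_2\otimes N^{-2})=0$; this vanishing is computed by Riemann--Roch on $S$ using $V_2=f_*\omega_S^{\otimes 2}$, and it is the only place where the fact that $V_2$ comes from an actual surface (rather than being an arbitrary rank-$9$ degree-$5$ bundle) enters. Since $S^2(V_1)\otimes N^{-2}$ has $h^1=1$ (from the summand $\mathcal{O}_B$), one obtains $h^1(\mathcal{L}_2\otimes N^{-2})\ge 1$, which for a degree-$0$ line bundle forces $\mathcal{L}_2\otimes N^{-2}\cong\mathcal{O}_B$. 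Your argument never invokes $S$, so it cannot close this gap.
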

  \begin{proof}
  Recall that $\mathcal{L}_2$ is a line bundle with $\deg \mathcal{L}_2=\deg S^2(V_1)-\deg V_2+\deg \mathcal{T}_2=\deg \mathcal{T}_2$.
  Since $\beta_2:\mathcal{L}_2\rightarrow S^2(V_1)$ is an  injection, we have (see   Appendix Lemma \ref{injection  less than maxslope}) $\slope(\mathcal{L}_2)\leq \maxslope(S^2(V_1))=\frac{2}{3}$, where $\maxslope(S^2(V_1)):=\max\{\slope(W)| W$ is  a direct summand of $S^2(V_1)\}$. Since moreover $\deg \mathcal{T}_2\geq 0$,  we get $\deg \mathcal{L}_2=\deg \mathcal{T}_2=0$.

  Twisting the exact sequence (\ref{short exact sequence sigma 2}) by $N^{-2}$ and taking cohomology, we get
  $$H^1(\mathcal{L}_2\otimes N^{-2}) \stackrel{\lambda}{\rightarrow} H^1(S^2(V_1)\otimes N^{-2})\rightarrow H^1(V_2\otimes N^{-2})\rightarrow 0.$$   By Riemann-Roch on $B$, we have
    $$\chi(V_2\otimes N^{-2})=h^0(V_2\otimes N^{-2})-h^1(V_2\otimes N^{-2})=h^0(\mathcal{O}_S(2K)\otimes f^*N^{-2})-  h^1(V_2\otimes N^{-2}).$$
    By Riemann-Roch   on $S$, we have
    $$\chi(\mathcal{O}_S(2K)\otimes f^*N^{-2})= h^0(\mathcal{O}_S(2K)\otimes f^*N^{-2})=\chi(\mathcal{O}_S)+K_S^2=5.$$
     Hence we get $h^1(V_2\otimes N^{-2})=0$ and consequently the map $\lambda$ is surjective. Since
     $$S^2(V_1)\otimes N^{-2}= S^2(E_{[0]}(3,1))\otimes N^{-2}\oplus E_{[0]}(3,1)\otimes N^{-2}\oplus \mathcal{O}_B,$$

     by Lemma \ref{Atiyah lemma 15 and theorem 5}, we see $h^0(S^2(V_1)\otimes N^{-2})=h^0(\mathcal{O}_B)=1$. Hence $h^1(\mathcal{L}_2\otimes N^{-2})\geq 1$. Since moreover $\rank \mathcal{L}_2\otimes N^{-2}=1$ and $\deg \mathcal{L}_2\otimes N^{-2}=0$, we have $\mathcal{L}_2\otimes N^{-2}\cong \mathcal{O}_B$, i.e. $\mathcal{L}_2\cong N^{\otimes 2}$.
  \end{proof}

The following corollary follows easily from Lemmas \ref{Takahashi's lemma generated in degree 1} and \ref{T2 is 0}.
\begin{cor}  \label{Sigma is canonical model}
 Under condition $(\star)$, $\mathcal{R}(f)$ is generated in degree 1 and $\Sigma$ is isomorphic to its canonical model. In particular, $\Sigma$ has at most rational double points as singularities.
\end{cor}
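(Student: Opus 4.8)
The plan is to chain the two cited lemmas and then upgrade the conclusion from the relative canonical model to the canonical model, the latter step using only that the base $B$ is elliptic. The essential content has already been isolated in Lemmas \ref{T2 is 0} and \ref{Takahashi's lemma generated in degree 1}, so the corollary is genuinely a formal consequence; there is no serious computation to carry out.

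First I would observe that assumption $(\star)$ puts us exactly in the hypothesis of Takahashi's lemma. Indeed, Lemma \ref{T2 is 0} asserts that under $(\star)$ one has $\mathcal{T}_2=0$ (together with $\deg\mathcal{L}_2=0$ and $\mathcal{L}_2\cong N^{\otimes 2}$, which we will not need here). With $\mathcal{T}_2=0$ in hand, Lemma \ref{Takahashi's lemma generated in degree 1} applies verbatim: it gives at once that the relative canonical map $w\colon S\to\mathbb{P}(V_1)$ is a morphism, that the relative canonical algebra $\mathcal{R}(f)$ is generated in degree $1$, and that $\Sigma=w(S)$ is isomorphic to its relative canonical model and hence has at most rational double points as singularities. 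This already delivers the two assertions ``$\mathcal{R}(f)$ is generated in degree $1$'' and ``at most rational double points'' directly.

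The one point that deserves attention—and the closest thing to an obstacle—is the wording ``$\Sigma$ is isomorphic to its \emph{canonical} model'' in the statement, whereas Lemma \ref{Takahashi's lemma generated in degree 1} only yields the \emph{relative} canonical model. To bridge this I would invoke Remark \ref{g is begger than 3 R(f) generated in degree 2}(ii): for a minimal surface of general type carrying a fibration $f\colon S\to B$, the canonical model coincides with the relative canonical model except when $B\cong\mathbb{P}^1$ and $f$ admits a $(-2)$-curve mapped surjectively onto $B$. In our setting $B=Alb(S)$ is an elliptic curve, so $B\not\cong\mathbb{P}^1$ and the exceptional case is excluded; consequently the relative canonical model of $f$ is the canonical model of $S$. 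Combining this identification with the previous paragraph shows that $\Sigma$ is isomorphic to the canonical model and that this model has at most rational double points, completing the proof.
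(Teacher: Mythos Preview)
Your proposal is correct and follows essentially the same route as the paper, which simply records that the corollary follows from Lemmas \ref{Takahashi's lemma generated in degree 1} and \ref{T2 is 0}. In fact you are slightly more careful than the paper: you make explicit the passage from the relative canonical model to the canonical model via Remark \ref{g is begger than 3 R(f) generated in degree 2}(ii), a point the paper leaves implicit.
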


\begin{remark}
\label{T2 is 0 for genus bigger than 4}
(i)   The proof of Lemma \ref{T2 is 0}  can be applied to a more general case:

 Let $S$ be a minimal surface with $p_g=q=1,K^2=4$ and a nonhyperelliptic Albanese fibration $f: S\rightarrow B=Alb(S)$  of genus $g\geq 4$. Then we have $\mathcal{T}_2=0$, i.e. $\sigma_2: S^2(V_1)\rightarrow V_2$ is   surjective.
 \begin{proof}
   By Lemma \ref{BZ00 iota}, we see that $V_1$ is either indecomposable or is a direct sum of two indecomposable vector bundles, hence $\maxslope(S^2(V_1))=\frac{2}{g}$ or $\frac{2}{g-1}$. In particular, we always have  $\maxslope(S^2(V_1))\leq \frac{2}{g-1}$. On the other hand, we have $\rank \mathcal{L}_2=\rank S^2(V_1)-\rank V_2=\frac{(g-2)(g-3)}{2}$ and $\deg \mathcal{L}_2=\deg S^2(V_1)-\deg V_2=\deg \mathcal{T}_2+g-4$. Since $\beta_2$ is injective, we have $\slope(\mathcal{L}_2)=\frac{2(\deg \mathcal{T}_2+g-4)}{(g-2)(g-3)}\leq \maxslope(S^2(V_1))\leq \frac{2}{(g-1)}$. Therefore we get $\deg \mathcal{T}_2=0$, i.e. $\mathcal{T}_2=0$.
       \end{proof}

 (ii)  Using a similar method, Catanese and Konno (cf. \cite{Kon93} Lemma 2.5) proved that: for minimal surfaces    with $p_g=q=1,K^2=4$ and  nonhyperelliptic Albanese fibrations of genus $g$, if $V_1$ is semi-stable (which in our case is equivalent to ``$V_1$ is indecomposable"  by Lemma \ref{indecomposable is semistable}), then $g\leq 6$.

 The proof is easy: since $V_1$ is semi-stable, $S^2(V_1)$ is semi-stable. Hence we have $\slope(S^2(V_1))=\frac{2}{g}\leq slope(V_2)=\frac{K^2+1}{3(g-1)}$ and consequently $g\leq \frac{6}{5-K^2}=6$.   Unluckily, this proof does not work if $V_1$ is not semi-stable or $K^2\geq 5$.

 (iii)  The exact sequence (\ref{short exact sequence sigma 2}) splits and we have $V_2=S^2(E_{[0]}(3,2))\oplus E_{[0]}(3,1)\otimes N$:

 From the proof of Lemma \ref{T2 is 0}, we see that the map $\lambda: H^1(\mathcal{L}_2\otimes N^{-2})\rightarrow H^1(N^{\otimes 2}\otimes N^{-2})$ is nonzero. Hence the composition map $pr\circ \beta_2: \mathcal{L}_2\rightarrow S^2(V_1)\rightarrow N^{\otimes 2}$ (where $pr: S^2(V_1)\rightarrow N^{\otimes 2}$ is the natural projection) is nonzero. Since moreover $\mathcal{L}_2\cong N^{\otimes 2}$ is a line bundle,  $pr\circ \beta_2$ is an isomorphism. Therefore the exact sequence (\ref{short exact sequence sigma 2}) splits and  $V_2=S^2(E_{[0]}(3,2))\oplus E_{[0]}(3,1)\otimes N$.

\end{remark}

    By Lemma \ref{injection  from L2 to L3}, we have two natural injections $j: \mathcal{L}_2\otimes V_1\rightarrow \mathcal{L}_3$ and $h: \mathcal{L}_2\otimes V_1\rightarrow S^3(V_1)$. Hence we have the following two exact sequences
    \begin{equation} \label{exact sequence j}
                 0\rightarrow  \mathcal{L}_2\otimes V_1\rightarrow \mathcal{L}_3 \rightarrow \cokernel(j) \rightarrow 0
    \end{equation}
      \begin{equation} \label{exact sequence h}
                 0\rightarrow  \mathcal{L}_2\otimes V_1\rightarrow S^3(V_1) \rightarrow \cokernel(h) \rightarrow 0
    \end{equation}
     Now we show
     \begin{lemma}   \label{exact sequence j h split}
             Under assumption $(\star)$,  the exact sequences (\ref{exact sequence j}) and (\ref{exact sequence h}) are splitting. In particular,   $\cokernel(j)$   and $\cokernel(h)$ are locally free sheaves.
     \end{lemma}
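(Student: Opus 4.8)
The plan is to reduce everything to the single stronger assertion that $h(\mathcal{L}_2\otimes V_1)$ is a direct summand of $S^3(V_1)$. Granting this, the splitting of (\ref{exact sequence h}) is tautological, and the splitting of (\ref{exact sequence j}) follows from the relation $\beta_3\circ j=h$ together with the inclusions $h(\mathcal{L}_2\otimes V_1)\subseteq\beta_3(\mathcal{L}_3)\subseteq S^3(V_1)$. I would prove the summand statement by an explicit description of $h=\gamma\circ(\beta_2\otimes\mathrm{id}_{V_1})$ coming from the decomposition $V_1=E\oplus N$ with $E:=E_{[0]}(3,1)$, rather than by a cohomological splitting criterion.

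Recalling $\mathcal{L}_2\cong N^{\otimes 2}$ (Lemma \ref{T2 is 0}), one has
$$\mathcal{L}_2\otimes V_1\cong(E\otimes N^{\otimes 2})\oplus N^{\otimes 3},\qquad S^3(V_1)=S^3(E)\oplus(S^2(E)\otimes N)\oplus(E\otimes N^{\otimes 2})\oplus N^{\otimes 3}.$$
By Remark \ref{T2 is 0 for genus bigger than 4}(iii), the composition of $\beta_2\colon\mathcal{L}_2\hookrightarrow S^2(V_1)=S^2(E)\oplus(E\otimes N)\oplus N^{\otimes 2}$ with the projection onto the last summand is an isomorphism, so after identifying $\mathcal{L}_2$ with $N^{\otimes 2}$ we may write $\beta_2(s)=(\phi(s),\psi(s),s)$ for suitable $\phi\colon N^{\otimes 2}\to S^2(E)$, $\psi\colon N^{\otimes 2}\to E\otimes N$. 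I would then compute $\mathrm{pr}\circ h$, where $\mathrm{pr}\colon S^3(V_1)\to(E\otimes N^{\otimes 2})\oplus N^{\otimes 3}$ is the projection onto the last two summands: plugging in a local section $s\otimes v$ with $v$ a section of $E$ yields, in $E\otimes N^{\otimes 2}$, exactly the canonical multiplication isomorphism $N^{\otimes 2}\otimes E\xrightarrow{\sim}E\otimes N^{\otimes 2}$ and nothing in $N^{\otimes 3}$; plugging in $s\otimes v$ with $v$ a section of $N$ yields, in $N^{\otimes 3}$, exactly the canonical isomorphism $N^{\otimes 2}\otimes N\xrightarrow{\sim}N^{\otimes 3}$ (together with a possible further term in $E\otimes N^{\otimes 2}$ coming from $\psi$). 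Hence $\mathrm{pr}\circ h$ is block upper-triangular with isomorphisms on the diagonal, so it is an isomorphism, and therefore $S^3(V_1)=h(\mathcal{L}_2\otimes V_1)\oplus\ker(\mathrm{pr})=h(\mathcal{L}_2\otimes V_1)\oplus\big(S^3(E)\oplus(S^2(E)\otimes N)\big)$. In particular (\ref{exact sequence h}) splits and $\cokernel(h)\cong S^3(E)\oplus(S^2(E)\otimes N)$ is locally free.

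For (\ref{exact sequence j}) I would note first that $\mathcal{L}_3=\kernel\sigma_3$ is a subsheaf of the locally free sheaf $S^3(V_1)$ on the smooth curve $B$, hence itself locally free; and the direct summand $P:=h(\mathcal{L}_2\otimes V_1)$ of $S^3(V_1)$ is contained in $\beta_3(\mathcal{L}_3)$, so the projection $S^3(V_1)\to P$ restricts to a retraction of the inclusion $P\hookrightarrow\beta_3(\mathcal{L}_3)$ (modular law). This gives $\mathcal{L}_3\cong P\oplus\cokernel(j)$ with $\cokernel(j)$ a direct summand of a locally free sheaf, hence locally free, and simultaneously exhibits the splitting of (\ref{exact sequence j}). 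The only genuinely delicate point is the bookkeeping in the middle paragraph: one has to verify carefully that the two diagonal blocks of $\mathrm{pr}\circ h$ really are the canonical isomorphisms of the relevant (line) bundles and are unaffected by the off-diagonal contribution of $\psi$; once the two decompositions are written down in compatible notation this is short and mechanical.
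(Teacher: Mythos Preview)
Your proof is correct and follows essentially the same strategy as the paper: both show that $h(\mathcal{L}_2\otimes V_1)$ is a direct summand of $S^3(V_1)$, deduce the splitting of (\ref{exact sequence h}), and then obtain the splitting of (\ref{exact sequence j}) from the inclusion $h(\mathcal{L}_2\otimes V_1)\subset\beta_3(\mathcal{L}_3)$; in both cases Remark~\ref{T2 is 0 for genus bigger than 4}(iii) is the key input.

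The one genuine difference is \emph{how} the summand statement is established. The paper first proves the stronger fact that the full symmetrization map $\gamma:S^2(V_1)\otimes V_1\to S^3(V_1)$ is split, by decomposing $\gamma=\gamma_1\oplus\gamma_2\oplus\gamma_3\oplus\gamma_4$ and invoking rigidity of maps between indecomposable bundles on an elliptic curve (Lemma~\ref{nonzero map indecomposable vector bundles  slope less than} and Corollary~\ref{isomorphism of V W}) to handle $\gamma_2,\gamma_3$. It then reads off the summand statement from this global splitting. Your route bypasses the splitting of $\gamma$ entirely: you compute $\mathrm{pr}\circ h$ directly, see that with respect to the decompositions $(N^{\otimes 2}\otimes E)\oplus(N^{\otimes 2}\otimes N)$ and $(E\otimes N^{\otimes 2})\oplus N^{\otimes 3}$ it is block upper-triangular with the canonical isomorphisms on the diagonal, and conclude. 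This is more elementary---it needs no appendix lemmas on elliptic-curve bundles---and it also handles cleanly the off-diagonal term coming from $\psi$, which the paper's final paragraph glosses over somewhat (indeed the paper's displayed identity $\mathcal{L}_2\otimes V_1=E_{[0]}(3,1)\otimes N^{\otimes 2}$ drops the $N^{\otimes 3}$ summand). The paper's approach does give the extra information that $\gamma$ splits, but this is not used elsewhere.
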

   \begin{proof}
      It suffices to show that (\ref{exact sequence h}) is splitting: if (\ref{exact sequence h}) is splitting, then we have a map $h': S^3(V_1)\rightarrow \mathcal{L}_2\otimes V_1$ such that $h'\circ h=id$. Let $j':=h'\circ \beta_3:\mathcal{L}_3\rightarrow \mathcal{L}_2\otimes V_1$.  Then we have $j' \circ j=h'\circ \beta_3\circ j=h'\circ h=id$. Hence  (\ref{exact sequence j}) is also splitting.

      Now we show that   (\ref{exact sequence h}) is splitting.    Recall that we have the following commutative diagram (by Corollary \ref{Sigma is canonical model}, $\sigma_3$ is surjective):
        $$\xymatrix    {0 \ar[r] & \mathcal{L}_2\otimes V_1 \ar[r]^{\beta_2\otimes id} \ar[d]^j \ar[dr]^h  & S^2(V_1)\otimes V_1 \ar[r]^{\sigma_2\otimes id} \ar[d]^{\gamma} &V_2\otimes V_1 \ar[r] \ar[d]^{\mu_{2,1}}  &0 \\
         0 \ar[r] & \mathcal{L}_3 \ar[r]^{\beta_3} & S^3(V_1) \ar[r]^{\sigma_3} &V_3 \ar[r]    & 0
         }$$

       First we  show that the surjection  $\gamma$ is splitting.

              Note that $$S^2(V_1)\otimes V_1$$
              $$=S^2(E_{[0]}(3,1))\otimes E_{[0]}(3,1)\oplus E_{[0]}(3,1)\otimes N\otimes E_{[0]}(3,1) \oplus S^2(E_{[0]}(3,1))\otimes N $$
              $$ \oplus N^{\otimes 2}\otimes E_{[0]}(3,1)
                \oplus E_{[0]}(3,1)\otimes N^{\otimes 2} \oplus N^{\otimes 3}$$
             $$S^3(V_1)=S^3(E_{[0]}(3,1))\oplus    S^2(E_{[0]}(3,1))\otimes N     \oplus E_{[0]}(3,1)\otimes N^{\otimes 2}   \oplus N^{\otimes 3}$$

            By the definition of $\gamma$, we can decompose the surjection $\gamma$ as $\gamma_1\oplus \gamma_2\oplus \gamma_3\oplus \gamma_4$, where
             $$\gamma_1:   S^2(E_{[0]}(3,1))\otimes E_{[0]}(3,1) \twoheadrightarrow   S^3(E_{[0]}(3,1)),$$
             $$\gamma_2:   E_{[0]}(3,1)\otimes N\otimes E_{[0]}(3,1) \oplus S^2(E_{[0]}(3,1))\otimes N  \twoheadrightarrow      S^2(E_{[0]}(3,1))\otimes N,$$

              $$\gamma_3:   N^{\otimes 2}\otimes E_{[0]}(3,1)  \oplus E_{[0]}(3,1)\otimes N^{\otimes 2}  \twoheadrightarrow  E_{[0]}(3,1)\otimes N^{\otimes 2},$$

              $$\gamma_4:       N^{\otimes 3}   \twoheadrightarrow N^{\otimes 3} .$$

               By \cite{CP06} p. 1035, $\gamma_1$ is splitting. Since moreover $\gamma_4$ is an isomorphism, it suffices to show that $\gamma_2$ and  $\gamma_3$ are splitting. By \cite{CP06} p. 1035, we have
                 $$S^2(E_{[0]}(3,1))=E_{[0]}(3,2) \oplus  E_{[0]}(3,2) ,$$
               $$  E_{[0]}(3,1)\otimes E_{[0]}(3,1)   =S^2(E_{[0]}(3,1))\oplus E_{[0]}(3,2) .$$

               In particular,  each  side of $\gamma_2$ (resp. $\gamma_3$)  is a direct sum of $E_{[0]}(3,2)\otimes N$'s  (resp. $E_{[0]}(3,1)\otimes N^{\otimes 2}$'s).
      By   Lemma  \ref{nonzero map indecomposable vector bundles  slope less than} and Corollary \ref{isomorphism of V W},   $\gamma_2(E_{[0]}(3,2)\otimes N)$ (resp. $\gamma_3(E_{[0]}(3,1)\otimes N^{\otimes 2})$) is either $0$ or one direct summand of  right side of $\gamma_2$ (resp.  $\gamma_3$). Hence $\gamma_2$ and $\gamma_3$ are   splitting.

             By Remark \ref{T2 is 0 for genus bigger than 4} (iii), the exact sequence (\ref{short exact sequence sigma 2}) is splitting. In particular, we have $h=\gamma\circ(\beta_2\otimes id)=\gamma_3\circ (\beta_2\otimes id)$. Since  $(\beta_2\otimes id)(\mathcal{L}_2\otimes V_1)\cong \mathcal{L}_2\otimes V_1=E_{[0]}(3,1)\otimes N^{\otimes 2}$ and $h$ is injective,  $h(\mathcal{L}_2\otimes V_1)=\gamma_3((\beta_2\otimes id)(\mathcal{L}_2\otimes V_1))$  must be a direct summand $E_{[0]}(3,1)\otimes N^{\otimes 2}$ of $S^3(V_1)$.

                  Therefore the exact sequence (\ref{exact sequence h}) is splitting, and consequently (\ref{exact sequence j})  is also  splitting.
                   \end{proof}
  
 Set $\mathcal{L}_3':=\cokernel(j), \tilde{V}_3:=\cokernel(h)$. Then   $\mathcal{L}_3'$ is a line bundle of degree $1$ and
 \begin{equation} \label{tilde of V3}
 \tilde{V}_3=(\bigoplus_{i=1,2}\mathcal{O}_B(0)) \oplus (\bigoplus_{j=1}^8\mathcal{M}_j(0))\oplus (\bigoplus_{l=1,2}E_{[0]}(3,2)\otimes N)
  \end{equation}
  where $\mathcal{M}_j$  are the line bundles with $\mathcal{M}_j^{\otimes 3}\cong \mathcal{O}_B, \mathcal{M}_j\ncong \mathcal{O}_B$.

 \begin{remark}\label{another proof for T3 is 0}
 (i)  Here we can give another proof for Corollary \ref{Sigma is canonical model} without using Lemma \ref{Takahashi's lemma generated in degree 1}:

   Since $\beta_3'$   is an injection, by Lemma \ref{injection  less than maxslope} we have $\slope(\mathcal{L}_3')=1+\deg\mathcal{T}_3\leq \maxslope(\tilde{V}_3)=1$. Hence $\deg \mathcal{T}_3=0$, i.e. $\mathcal{T}_3=0$.    By a result of Cai (cf. \cite{Cai95}) $\mathcal{R}(f)$ is generated in degree $\leq 3$. Therefore $\mathcal{R}(f)$ is generated in degree 1.

   (ii) From  (i) we see $\deg \mathcal{L}_3'=1=\maxslope(\tilde{V}_3)$. By (\ref{tilde of V3}) and Lemma \ref{nonzero map indecomposable vector bundles  slope less than}, we see that $\mathcal{L}_3'\cong \mathcal{O}_B(0)$ or $\mathcal{L}_3'\cong \mathcal{M}_i(0)$ for some $1\leq i\leq 8$.
\end{remark}

Now we have the following short  exact sequence
           \begin{equation}  \label{short exact sequence sigma 3' }
               0\rightarrow \mathcal{L}'_3 \stackrel{\beta_3'} {\rightarrow}   \tilde{V}_3 \stackrel{\sigma_3}{\rightarrow} V_3\rightarrow 0.
         \end{equation}

 The injection $\beta_3'$ defines a divisor $Y$ in $Q$, which is nothing but $\Sigma$ (see Remark \ref{Y equas Sigma}).

  \begin{pro}   \label{Sigma is complete intersection}
     $\Sigma$ is a complete intersection of type $(2,3)$ in $\mathbb{P}(V_1)$.
  \end{pro}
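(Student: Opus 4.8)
The plan is to realise $\Sigma$ explicitly as $Q\cap X$ for a suitable relative hypercubic $X\subset\mathbb{P}(V_1)$. Recall from Section 3.1 that $Q\subset\mathbb{P}(V_1)$ is the divisor in $|\mathcal{O}_{\mathbb{P}(V_1)}(2)\otimes\pi^*\mathcal{L}_2^{-1}|$ cut out by the section $q$ attached to the inclusion $\beta_2:\mathcal{L}_2\hookrightarrow S^2(V_1)$; and that, by Remark \ref{Y equas Sigma}, $\Sigma=Y$, where $Y$ is the divisor on $Q$ lying in $|\mathcal{O}_Q(3)\otimes\pi_Q^*(\mathcal{L}_3')^{-1}|$ and cut out by the section $s_Y$ attached to the inclusion $\beta_3':\mathcal{L}_3'\hookrightarrow\tilde{V}_3$. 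Consequently it suffices to lift $s_Y$ to a (necessarily nonzero) section $s_X\in H^0(\mathbb{P}(V_1),\mathcal{O}_{\mathbb{P}(V_1)}(3)\otimes\pi^*(\mathcal{L}_3')^{-1})$: then $X:=\{s_X=0\}$ is a relative hypercubic with $X|_Q=\{s_Y=0\}=Y=\Sigma$.

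To translate the lifting into sheaf language I would first identify $\tilde{V}_3$ with $\pi_{Q*}\mathcal{O}_Q(3)$. Twisting the ideal sheaf sequence $0\to\mathcal{O}_{\mathbb{P}(V_1)}(1)\otimes\pi^*\mathcal{L}_2\to\mathcal{O}_{\mathbb{P}(V_1)}(3)\to\mathcal{O}_Q(3)\to 0$ and pushing it forward to $B$ (using $R^1\pi_*\mathcal{O}_{\mathbb{P}(V_1)}(1)=0$) yields $0\to V_1\otimes\mathcal{L}_2\to S^3(V_1)\to\pi_{Q*}\mathcal{O}_Q(3)\to 0$, whose first arrow is precisely $h$; hence $\pi_{Q*}\mathcal{O}_Q(3)=\cokernel(h)=\tilde{V}_3$, and under the isomorphisms $H^0(\mathbb{P}(V_1),\mathcal{O}_{\mathbb{P}(V_1)}(3)\otimes\pi^*(\mathcal{L}_3')^{-1})\cong H^0(B,S^3(V_1)\otimes(\mathcal{L}_3')^{-1})$ and $H^0(Q,\mathcal{O}_Q(3)\otimes\pi_Q^*(\mathcal{L}_3')^{-1})\cong H^0(B,\tilde{V}_3\otimes(\mathcal{L}_3')^{-1})$ the restriction map becomes the one induced by the quotient $p:S^3(V_1)\twoheadrightarrow\tilde{V}_3$ appearing in (\ref{exact sequence h}).

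The crucial input is then Lemma \ref{exact sequence j h split}: the sequence (\ref{exact sequence h}) splits over $\mathcal{O}_B$, so $p$ admits a section; tensoring this splitting with the line bundle $(\mathcal{L}_3')^{-1}$ preserves it, and therefore the map $H^0(B,S^3(V_1)\otimes(\mathcal{L}_3')^{-1})\to H^0(B,\tilde{V}_3\otimes(\mathcal{L}_3')^{-1})$ is surjective. Choosing a preimage $s_X$ of $s_Y$ produces the wanted $X\in|\mathcal{O}_{\mathbb{P}(V_1)}(3)\otimes\pi^*(\mathcal{L}_3')^{-1}|$ with $s_X|_Q=s_Y$, hence $X|_Q=Y=\Sigma$ as divisors on $Q$.

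It remains to see that $\Sigma=Q\cap X$ as schemes. Since $Q$ is a hypersurface in the smooth fourfold $\mathbb{P}(V_1)$ it is Cohen--Macaulay, hence has no embedded components, and the section $s_Y$ is a non-zero-divisor on $\mathcal{O}_Q$ because its zero scheme $\Sigma$ is a proper divisor; unwinding ideal sheaves then gives $\mathcal{O}_{Q\cap X}=\mathcal{O}_Q/(s_X|_Q)=\mathcal{O}_Q/(s_Y)=\mathcal{O}_Y=\mathcal{O}_\Sigma$, so that $\mathcal{I}_\Sigma=\mathcal{I}_Q+\mathcal{I}_X$ is generated by one equation of relative degree $2$ and one of relative degree $3$. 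As $\dim\Sigma=2=\dim\mathbb{P}(V_1)-2$, the pair $(q,s_X)$ is a regular sequence and $\Sigma$ is a complete intersection of type $(2,3)$ in $\mathbb{P}(V_1)$. The only step carrying any risk is the lifting of $s_Y$, and it is settled entirely by the splitting of (\ref{exact sequence h}) — which is precisely where the hypothesis that $V_1$ be decomposable is used — so no cohomology vanishing on $B$ is needed.
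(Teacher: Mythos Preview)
Your argument is correct and follows essentially the same route as the paper: both proofs use the splitting of the exact sequence (\ref{exact sequence h}) to deduce that the restriction $H^0(B,S^3(V_1)\otimes(\mathcal{L}_3')^{-1})\to H^0(B,\tilde{V}_3\otimes(\mathcal{L}_3')^{-1})$ is surjective, and then lift the section cutting out $\Sigma$ on $Q$ to one on $\mathbb{P}(V_1)$. You supply a few additional details the paper leaves implicit --- the identification $\tilde{V}_3\cong\pi_{Q*}\mathcal{O}_Q(3)$ via the ideal sequence of $Q$, and the verification that $\Sigma=Q\cap X$ scheme-theoretically with $(q,s_X)$ a regular sequence --- but the core idea is identical.
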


    \begin{proof}
     Note that $\Sigma\subset Q$ is a divisor defined by a section in
     $$H^0(Q,\mathcal{O}_Q(3)\otimes \pi_Q^*(\mathcal{L}_3')^{-1})\cong H^0(B, \tilde{V}_3\otimes (\mathcal{L}_3')^{-1}).$$
      Since the exact sequence (\ref{exact sequence h}) is splitting, we see that the restriction map
         $$H^0(B,S^3(V_1)\otimes (\mathcal{L}_3')^{-1})\rightarrow H^0(B, \tilde{V}_3\otimes (\mathcal{L}_3')^{-1})$$
          is surjective. Since moreover
     $$H^0(\mathcal{O}_{\mathbb{P}(V_1)}(3)\otimes \pi^*(\mathcal{L}_3')^{-1})\cong   H^0(B,S^3(V_1)\otimes (\mathcal{L}_3')^{-1}),$$
       we see
       $$|\mathcal{O}_Q(3)\otimes \pi_Q^*(\mathcal{L}_3')^{-1}|\cong |\mathcal{O}_{\mathbb{P}(V_1)}(3)\otimes \pi^*(\mathcal{L}_3')^{-1}|_Q.$$
       In particular   $\Sigma \in |\mathcal{O}_Q(3)\otimes \pi_Q^*(\mathcal{L}_3')^{-1}|$  is       the restriction of some divisor $X\in |\mathcal{O}_{\mathbb{P}(V_1)}(3)\otimes \pi^*(\mathcal{L}_3')^{-1}|$ to $Q$, i.e. $\Sigma=X\cap Q$.
       \end{proof}

	\begin{lemma} \label{N is of order 3}
      $N$ is a nontrivial torsion line bundle of order 3, i.e. $N^{\otimes 3}\cong \mathcal{O}_B, N\ncong \mathcal{O}_B$.
 \end{lemma}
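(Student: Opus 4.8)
The plan is to pin down the order of $N$ by applying adjunction to the complete intersection $\Sigma = Q\cap X$ inside $\mathbb{P}(V_1)$, and then to match the line bundle that comes out against the explicit list of candidates for $\mathcal{L}_3'$ recorded in Remark \ref{another proof for T3 is 0}(ii).

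First I would compute $K_S$ in two ways. On one side, $K_S = K_{S/B}$ since $B$ is elliptic, and since $\mathcal{R}(f)$ is generated in degree $1$ (Corollary \ref{Sigma is canonical model}) the relative canonical map factors as $\rho: S\to\Sigma\hookrightarrow\mathbb{P}(V_1)$ with $\mathcal{O}_{\mathbb{P}(V_1)}(T)$ pulling back to $\omega_{S/B}$; as $\Sigma$ has at worst rational double points (Corollary \ref{Sigma is canonical model}) it is Gorenstein and the resolution $\rho$ is crepant, so $\rho^*(T|_\Sigma) = K_S$. On the other side, writing $W = \mathbb{P}(V_1)$, with $K_W = -4T + \pi^*\det V_1$, $Q\equiv 2T - \pi^*\mathcal{L}_2$ (the relative hyperquadric $Q\subset\mathbb{P}(V_1)$ of Section 3.1) and $X\equiv 3T - \pi^*\mathcal{L}_3'$ (Proposition \ref{Sigma is complete intersection}), adjunction gives $K_\Sigma = (K_W + Q + X)|_\Sigma = T|_\Sigma + \pi^*(\det V_1\otimes\mathcal{L}_2^{-1}\otimes(\mathcal{L}_3')^{-1})|_\Sigma$. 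Pulling back by $\rho$ and cancelling the common term $K_S$ leaves $f^*(\det V_1\otimes\mathcal{L}_2^{-1}\otimes(\mathcal{L}_3')^{-1})\cong\mathcal{O}_S$.

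Since $f$ has connected fibres, $f^*$ is injective on Picard groups, so $\mathcal{L}_3'\cong\det V_1\otimes\mathcal{L}_2^{-1}$. Plugging in $\det V_1 = \mathcal{O}_B(0)\otimes N$ (from $V_1 = E_{[0]}(3,1)\oplus N$, Lemma \ref{BZ00 iota}) and $\mathcal{L}_2\cong N^{\otimes 2}$ (Lemma \ref{T2 is 0}) gives $\mathcal{L}_3'\cong\mathcal{O}_B(0)\otimes N^{-1}$. Comparing with Remark \ref{another proof for T3 is 0}(ii): if $\mathcal{L}_3'\cong\mathcal{O}_B(0)$ then $N\cong\mathcal{O}_B$, contradicting Lemma \ref{BZ00 iota}; hence $\mathcal{L}_3'\cong\mathcal{M}_i(0)$ for some $i$ with $\mathcal{M}_i^{\otimes 3}\cong\mathcal{O}_B$ and $\mathcal{M}_i\ncong\mathcal{O}_B$, which forces $N^{-1}\cong\mathcal{M}_i$ and therefore $N^{\otimes 3}\cong\mathcal{O}_B$; as $N$ is nontrivial, its order is exactly $3$. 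The one step needing care is the bookkeeping in the adjunction computation: being consistent about the convention $\pi_*\mathcal{O}_W(T) = V_1$ so that $T|_S = K_S$ holds with no extra twist, and checking that the crepant resolution $\rho$ contributes no correction term; everything else is an elementary computation in $\mathrm{Pic}(B)$.
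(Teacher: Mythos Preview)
Your proof is correct and follows essentially the same route as the paper: compare $\omega_S$ with $\omega_\Sigma$ via adjunction on the complete intersection $\Sigma=Q\cap X\subset\mathbb{P}(V_1)$ to force $\mathcal{L}_3'\cong\mathcal{O}_B(0)\otimes N^{-1}$, then invoke Remark \ref{another proof for T3 is 0}(ii) together with $N\ncong\mathcal{O}_B$ to conclude $N^{\otimes 3}\cong\mathcal{O}_B$. The only cosmetic differences are that you phrase the identification $\rho^*\omega_\Sigma\cong\omega_S$ in crepant-resolution language and make the injectivity of $f^*$ on $\mathrm{Pic}$ explicit, whereas the paper simply pulls back along the relative canonical map $w$ and passes directly from $\pi|_\Sigma^*$ trivial to the line bundle on $B$ being trivial.
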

 \begin{proof}
   Recall that we have the following commutative diagram
   $$\xymatrix{ S\ar[r]^w \ar[dr]^f & \Sigma\subset\mathbb{P}(V_1) \ar[d]^\pi \\
   ~ & B }$$
   where $w$ is the relative canonical map of $f$.
    Since $w$ is base point free and  $\Sigma$ has at most rational double points as singularities, we see  $w^*\mathcal{O}_\Sigma(T)\cong \omega_S\cong w^*\omega_{\Sigma}$.
   Since $\Sigma=\tilde{\Sigma}\cap Q$, we have
   $$\omega_\Sigma=\mathcal{O}_\Sigma(K_{\mathbb{P}(V_1)}+X+Q)=\mathcal{O}_\Sigma(T)\otimes \pi|_\Sigma^*(N^{-1}(0)\otimes
  (\mathcal{L}_3')^{-1}).$$

   (Note $\omega_{\mathbb{P}(V_1)}\cong \mathcal{O}_{\mathbb{P}(V_1)}(-4T)\otimes \pi^*N(0)$ and  $\mathcal{O}_{\mathbb{P}(V_1)}(Q+X)=\mathcal{O}_{\mathbb{P}(V_1)}(4T)\otimes \pi^*(N^{-2}\otimes (\mathcal{L}_3')^{-1})$.)

       Hence we get $\pi|_\Sigma^*(N^{-1}(0)\otimes(\mathcal{L}_3')^{-1})\cong \mathcal{O}_\Sigma$ and consequently  $N^{-1}(0)\otimes(\mathcal{L}_3')^{-1}\cong \mathcal{O}_B$, i.e. $N\cong (\mathcal{L}_3')^{-1}(0)$. By Remark \ref{another proof for T3 is 0} (ii) and the fact that $N\ncong \mathcal{O}_B$,  we see that $N\cong \mathcal{M}_i$ for some  $1\leq i\leq 8$.
    \end{proof}

Combining Proposition \ref{Sigma is complete intersection} and Lemma \ref{N is of order 3}, we get Theorem  \ref{Sigma is contained in the family}.

  \begin{cor}  \label{mathcal M is an irreducible component}
     Surfaces satisfying condition $(\star)$ give a 4-dimensional  irreducible subset $\mathcal{M}$ (cf. Definition \ref{defn of family M}) of $\mathcal{M}_{1,1}^{4,4}$. Moreover the closure $\overline{\mathcal{M}}$ of $\mathcal{M}$ is an  irreducible component of  $\mathcal{M}_{1,1}^{4,4}$.
\end{cor}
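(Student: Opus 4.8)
The plan is to deduce the statement from Theorem~\ref{Sigma is contained in the family} and Lemma~\ref{dimesion of M} by exhibiting $\mathcal{M}$ as a nonempty \emph{open} subset of $\mathcal{M}_{1,1}^{4,4}$, after which the conclusion becomes a formal fact about irreducible open subsets of a scheme.

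First I would identify $\mathcal{M}$ set-theoretically with the locus $U\subseteq\mathcal{M}_{1,1}^{4,4}$ of classes $[S]$ whose Albanese fibration is nonhyperelliptic and for which $V_1=f_*\omega_S$ is decomposable. The inclusion $U\subseteq\mathcal{M}$ is exactly Theorem~\ref{Sigma is contained in the family}, since any such $S$ satisfies condition $(\star)$ and hence lies in the family $M$, so $[S]\in\mathcal{M}$. For the reverse inclusion $\mathcal{M}\subseteq U$ I would check that every member of $M$ already enjoys these two properties: if $S$ is the minimal resolution of some $S'=Q\cap X\in M'$, the cohomology computations of Lemma~\ref{invariants} use only that $S'$ is a divisor of the prescribed class in $W$, not its smoothness, and since rational double points are canonical one has $f_*\omega_S=(f')_*\omega_{S'}=V=E_{[0]}(3,1)\oplus N$, which is decomposable; while the general Albanese fibre, being a smooth complete intersection of type $(2,3)$ in a fibre $H\cong\mathbb{P}^3$ disjoint from the finitely many singular points of $S'$, is a canonically embedded genus $4$ curve and therefore nonhyperelliptic, exactly as in Lemma~\ref{genus is 4}. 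This yields $\mathcal{M}=U$, which is nonempty by Theorem~\ref{existence}.

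Next I would prove that $U$ is open in $\mathcal{M}_{1,1}^{4,4}$. ``Having a nonhyperelliptic Albanese fibration'' is an open condition, as the hyperelliptic locus is closed. For the remaining condition I would invoke the theorem of Catanese--Ciliberto (\cite{CC91}, Theorems~1.2 and~1.4) recalled in the introduction: the number of indecomposable direct summands of $V_1$ is a topological, hence deformation, invariant, so it is locally constant on $\mathcal{M}_{1,1}^{4,4}$; thus ``$V_1$ decomposable'' cuts out a union of connected components, which is open (and closed). Intersecting the two loci shows $U=\mathcal{M}$ is open.

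Finally I would combine this with Lemma~\ref{dimesion of M}, which gives that $\mathcal{M}$ is irreducible of dimension $4$. An irreducible subset is contained in some irreducible component $Z$ of $\mathcal{M}_{1,1}^{4,4}$; since $\mathcal{M}$ is in addition open and nonempty it is dense in the irreducible space $Z$, and as $\overline{\mathcal{M}}\subseteq Z$ this forces $\overline{\mathcal{M}}=Z$, so $\overline{\mathcal{M}}$ is an irreducible component of $\mathcal{M}_{1,1}^{4,4}$. I expect the only genuinely delicate point to be the equality $\mathcal{M}=U$, i.e. verifying that passing to the minimal resolution of a surface in $M'$ preserves both the decomposability of $V_1$ and the nonhyperellipticity of the general Albanese fibre; the openness of the decomposability locus is borrowed verbatim from the cited invariance theorem, and everything else is routine point-set topology on the Gieseker moduli scheme.
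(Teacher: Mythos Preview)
Your proof is correct and follows essentially the same approach as the paper's: establish that $\mathcal{M}$ coincides with the locus of surfaces satisfying $(\star)$ via Theorem~\ref{Sigma is contained in the family}, invoke Lemma~\ref{dimesion of M} for irreducibility and dimension, and argue that $(\star)$ is an open condition using the Catanese--Ciliberto topological invariance of the number of summands of $V_1$ together with the openness of nonhyperellipticity. You are somewhat more explicit than the paper about the reverse inclusion $\mathcal{M}\subseteq U$ (checking that the minimal resolution of every $S'\in M'$ still satisfies $(\star)$, which the paper handles via Remark~\ref{M not empty V1 is decomposable}) and about the final point-set argument, but these are elaborations of the same strategy rather than a different route.
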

  \begin{proof}
    By Theorem \ref{Sigma is contained in the family}, surfaces satisfying condition $(\star)$ are in one to one correspondence with surfaces in $M$. Hence  their image in  $\mathcal{M}_{1,1}^{4,4}$ is exactly  $\mathcal{M}$.

     Note that  $p_g,q,K^2,g$ are all deformation invariants. By  \cite{CC91} Theorems 1.2 and 1.4,    ``$V_1$ is decomposable" is a topological invariant condition, hence it is also a deformation invariant condition. Since moreover ``the general Albanese fibre is nonhypereliptic" is an open condition, we see that   condition ($\star$) is an open condition. Hence a small deformation of such a surface also satisfies condition $(\star)$. Therefore   $\overline{\mathcal{M}}$   is an  irreducible component of  $\mathcal{M}_{1,1}^{4,4}$.
  \end{proof}

  \begin{remark} \label{pg=q=2 also 1to1 corresponence}
  All the arguments in this section   work for surfaces in Remark \ref{examples with pg is 2} (i). i.e. surfaces $S$ satisfying the following condition

   $(\star \star)$    $S$ is a minimal surface with $p_g=q=2,K^2=4$ endowed with a genus 4   nonhyperelliptic  fibration $f: S\rightarrow B$  over an elliptic curve $B$ such that   $V_1=f_*\omega_S$ is decomposable.

   are in one to one correspondence with surfaces in the family in Remark \ref{examples with pg is 2}.

   In particular,  these surfaces give a 4-dimensional irreducible subset in $\mathcal{M}_{2,2}^4$, the Gieseker moduli space of minimal surfaces with $p_g=q=2,K^2=4$. (see \cite{CMP14} for more details about minimal surfaces with $p_g=q=2,K^2=4$).
 \end{remark}

	\section{Appendix}
In this appendix, we list some lemmas that we use in section 4. 
 Here we always assume that $V$ and $V'$  are two vector bundles over an elliptic curve $B$.

\begin{lemma}[Atiyah \cite{Ati57} Lemma 15 and Theorem 5]
\label{Atiyah lemma 15 and theorem 5}

If   $V$ is   indecomposable,   then $h^0(V)=d$ if $d>0$; $h^0(V)=0$ if $d<0$;  $h^0(V)=0$ or $1$ if $d=0$. Moreover   if $d=0$, then $h^0(V)=1$ if and only if $\det V\cong \mathcal{O}_B$.

\end{lemma}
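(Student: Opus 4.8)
The statement is Atiyah's, so the ``proof'' I would give is essentially to cite \cite{Ati57} (Lemma~15 and Theorem~5); but the shape of the argument, which I would at least sketch, is the following. The two ingredients are Riemann--Roch and Serre duality on the elliptic curve $B$, together with the fact that an indecomposable bundle on an elliptic curve is semistable (Lemma~\ref{indecomposable is semistable}). On a curve of genus $1$ one has $\chi(V)=h^0(V)-h^1(V)=\deg V=d$, and since $K_B\cong\mathcal{O}_B$, Serre duality gives $h^1(V)=h^0(V^\vee)$, where $V^\vee$ is again indecomposable, of degree $-d$. Thus the whole lemma reduces to computing $h^0$ of an indecomposable (hence semistable) bundle from its degree, and, in the borderline case, from its determinant.

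The cases $d\neq 0$ are the easy ones. If $d<0$ then $\slope(V)<0$; a nonzero section would realize $\mathcal{O}_B$ as a subsheaf of $V$, and its saturation is a line subbundle $L$ with $\deg L\geq 0$, contradicting $\slope(L)\leq\slope(V)<0$. Hence $h^0(V)=0$ (and then $h^1(V)=-d$). If $d>0$, apply this to $V^\vee$, which is indecomposable of negative degree, to get $h^1(V)=h^0(V^\vee)=0$, so $h^0(V)=\chi(V)=d$.

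The delicate case is $d=0$, i.e. $\slope(V)=0$, and this is exactly the part I would quote from Atiyah rather than reprove. The argument begins as above: if $h^0(V)>0$, the saturation $L$ of a nonzero section satisfies $0\leq\deg L\leq\slope(V)=0$, so $\deg L=0$, and a degree-$0$ line bundle with a nonzero section is trivial; hence $\mathcal{O}_B$ is a subbundle of $V$. One then runs an induction on $\rank V$ via $0\to\mathcal{O}_B\to V\to V'\to 0$: every indecomposable summand of $V'$ has degree $0$ (a quotient of a semistable slope-$0$ bundle has nonnegative slope, and the degrees of the summands sum to $0$), so the inductive hypothesis controls $h^0(V')$, and the hypothesis that $V$ is \emph{indecomposable} — so the extension does not split off any summand of $V'$ — is what forces $h^0(V)\leq 1$ and pins down the connecting map $H^0(V')\to H^1(\mathcal{O}_B)\cong\mathbb{C}$. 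Carried through, this realizes the unique indecomposable bundle of each rank admitting a nonzero section as Atiyah's iterated self-extension of $\mathcal{O}_B$, whose determinant is $\mathcal{O}_B$; the sharp ``if and only if'' with $\det V\cong\mathcal{O}_B$ then follows from Atiyah's classification of the indecomposable bundles of rank $r$ and degree $0$. The main obstacle is precisely this last step: controlling $h^0$ and the determinant simultaneously inside the induction is not formal, and it is here that I would lean on \cite{Ati57} rather than rebuild the classification.
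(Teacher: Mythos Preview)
The paper gives no proof of this lemma at all: it is stated as a citation of Atiyah \cite{Ati57}, Lemma~15 and Theorem~5, and used as a black box. Your proposal, which also ultimately defers to \cite{Ati57} for the delicate $d=0$ case, is therefore already more than the paper provides, and your sketch of the $d\neq 0$ cases via Riemann--Roch, Serre duality, and semistability is correct.

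One caution about logical order within this paper: you invoke Lemma~\ref{indecomposable is semistable} (indecomposable $\Rightarrow$ semistable) in your argument, but in the paper that lemma is deduced from Lemma~\ref{nonzero map indecomposable vector bundles  slope less than}, whose proof in turn uses the present Lemma~\ref{Atiyah lemma 15 and theorem 5}. So as written your sketch would be circular relative to the paper's internal dependencies. This is easy to repair---the fact that an indecomposable bundle on an elliptic curve is semistable can be proved directly (e.g.\ via the Harder--Narasimhan filtration splitting over a genus-$1$ base), or one can simply avoid the word ``semistable'' in the $d<0$ case and argue as Atiyah does---but you should not cite Lemma~\ref{indecomposable is semistable} here.
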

		
	\begin{lemma}  \label{nonzero map indecomposable vector bundles  slope less than}
	If $V$ and $V'$ are both indecomposable  and   there is a nonzero map $\xi: V\rightarrow V'$, then
 $\slope(V)\leq \slope(V')$.
\end{lemma}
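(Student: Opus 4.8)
The plan is to reduce to Atiyah's classification of indecomposable bundles on an elliptic curve. First I would recall the basic normalization: by Atiyah's theorem every indecomposable bundle $W$ on $B$ of rank $r$ and degree $d$ can be written as $W\cong W_0\otimes L$ where $L$ is a line bundle of degree $\lfloor d/r\rfloor$ (or more precisely so that $\gcd$ considerations are respected) and $W_0$ is indecomposable with $0\le \deg W_0 < r$; equivalently there is a unique ``Atiyah bundle'' $F_r$ of rank $r$, degree $0$, with $h^0(F_r)=1$, and tensoring by line bundles and forming these $F_r$'s generates all indecomposables. The key input I would isolate is the vanishing statement: if $W$ is indecomposable of \emph{negative} slope, then $H^0(B,W)=0$. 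This is exactly Atiyah's Lemma 15 / Theorem 5 as quoted in Lemma \ref{Atiyah lemma 15 and theorem 5} (a sum-of-line-bundle-twists argument, or directly: a section would give a sub-line-bundle of degree $\ge 0$, contradicting semistability of indecomposables, since on an elliptic curve every indecomposable bundle is semistable — this semistability is itself a standard consequence of Atiyah, and is invoked later in the paper as Lemma \ref{indecomposable is semistable}).

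The main step is then the following translation. A nonzero map $\xi\colon V\to V'$ is the same as a nonzero global section of the bundle $\mathcal{H}om(V,V')\cong V^\vee\otimes V'$. So it suffices to show that $H^0(B,V^\vee\otimes V')=0$ whenever $\slope(V)>\slope(V')$. Now $V^\vee\otimes V'$ is generally not indecomposable, but by Atiyah's classification it decomposes as a direct sum of indecomposable bundles, and — this is the crucial numerical point — \emph{every} indecomposable summand of $V^\vee\otimes V'$ has slope equal to $\slope(V')-\slope(V)$. Indeed, slope is additive under tensor product in the sense that $\slope(A\otimes B)=\slope(A)+\slope(B)$ for bundles on a curve, and the indecomposable summands of a tensor product of \emph{semistable} bundles are again semistable of that same slope (semistability of indecomposables on an elliptic curve; tensor product of semistable bundles is semistable in characteristic $0$). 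Since $\slope(V^\vee)=-\slope(V)$, each summand has slope $\slope(V')-\slope(V)$, which is negative precisely when $\slope(V)>\slope(V')$. Applying the vanishing statement from the previous paragraph to each summand gives $H^0(B,V^\vee\otimes V')=0$, so no nonzero $\xi$ exists, a contradiction. Hence $\slope(V)\le\slope(V')$.

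I expect the main obstacle to be the claim that the indecomposable summands of $V^\vee\otimes V'$ all have the same slope $\slope(V')-\slope(V)$; handling this cleanly requires the facts that (i) indecomposable bundles on an elliptic curve are semistable and (ii) the tensor product of semistable bundles over a curve in characteristic $0$ is semistable, so that $V^\vee\otimes V'$ is semistable and hence every one of its indecomposable (equivalently, every Jordan–H\"older-relevant) summand has slope equal to $\slope(V^\vee\otimes V')$. Both (i) and (ii) are standard, and (i) is in any case recorded elsewhere in the paper, so I would simply cite them. An alternative, more elementary route avoiding (ii) is to write $V=\bigoplus V_i$ and $V'=\bigoplus V'_j$ into indecomposables (here all bundles in sight are in fact direct sums of indecomposables), reduce to $V,V'$ themselves indecomposable, and then use Atiyah's explicit description $\mathcal{H}om(V,V')\cong\bigoplus (\text{twists of }F_{r}\text{'s})$ together with the degree bookkeeping to see each summand has degree $r(r')\bigl(\slope(V')-\slope(V)\bigr)$ and apply Atiyah's $h^0$-vanishing for negative degree directly; I would present whichever of these two is shorter, but flag the semistability route as the conceptual one.
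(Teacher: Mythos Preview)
Your approach is essentially the paper's: assume $\slope(V)>\slope(V')$, observe that every indecomposable summand of $V'\otimes V^\vee$ then has slope $\slope(V')-\slope(V)<0$ (hence negative degree), apply Atiyah's vanishing (Lemma~\ref{Atiyah lemma 15 and theorem 5}) to each summand to get $\mathrm{Hom}(V,V')\cong H^0(V'\otimes V^\vee)=0$, and contradict the existence of $\xi$. The paper asserts the equal-slope claim for the summands without comment; you are right to flag it as the crux.

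One caution on your preferred justification of that claim: you propose to cite Lemma~\ref{indecomposable is semistable} (indecomposable $\Rightarrow$ semistable) together with preservation of semistability under tensor product. In the paper's logical order, however, Lemma~\ref{indecomposable is semistable} is \emph{deduced from} the present lemma, so invoking it here would be circular. Your alternative route---using Atiyah's explicit description of tensor products of indecomposable bundles on an elliptic curve to see directly that all summands have slope $\slope(V')-\slope(V)$---avoids this and is effectively what the paper's one-line proof is tacitly relying on; that is the version you should write up.
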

 \begin{proof}
            Since $V$ and $V'$ are indecomposable, if  $\slope(V)>\slope(V')$, then each direct summand of $V'\otimes V^\vee$ has slope $\slope(V')-\slope(V)<0$. In particular, each direct summand has   degree $<0$, hence it has no nonzero global section by  Lemma \ref{Atiyah lemma 15 and theorem 5}. Hence $Hom(V,V')\cong H^0(V'\otimes V^\vee)=0$, a contradiction. Therefore we have $\slope(V)\leq \slope(V')$.
\end{proof}

\begin{lemma} \label{indecomposable is semistable}
          If  $V$ is indecomposable, then it is semi-stable. If moreover $(\rank V, \deg V)=1$, then $V$ is stable.
  \end{lemma}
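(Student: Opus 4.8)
The plan is to prove the two assertions separately, the semi-stability being the substantive one.

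\textbf{Semi-stability.} Suppose for contradiction that $V$ is indecomposable but not semi-stable, and let $W\subseteq V$ be its maximal destabilising subsheaf: the (standard, unique) subsheaf of $V$ of maximal slope that contains every subsheaf of $V$ of that slope. Then $W$ is saturated (otherwise its saturation would have the same rank but strictly larger degree, hence larger slope), and $W$ is semi-stable (any subsheaf of $W$ is a subsheaf of $V$, so has slope $\leq\slope(W)$); moreover $\slope(W)>\slope(V)$ because $V$ is not semi-stable, so $0\neq W\neq V$ and $V/W$ is again a vector bundle. The first thing to check is that every nonzero subsheaf $U\subseteq V/W$ satisfies $\slope(U)<\slope(W)$: let $\widetilde U\subseteq V$ be the preimage of $U$, so $W\subseteq\widetilde U$ and $\widetilde U/W\cong U$, giving $\deg\widetilde U=\deg W+\deg U$ and $\rank\widetilde U=\rank W+\rank U$; then $\slope(\widetilde U)$ is the mediant of $\slope(W)$ and $\slope(U)$, so if $\slope(U)\geq\slope(W)$ we get $\slope(\widetilde U)\geq\slope(W)$, hence $\slope(\widetilde U)=\slope(W)$ (as $\slope(W)$ is maximal among subsheaves), hence $\widetilde U\subseteq W$ and $U=0$ --- a contradiction.

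Next I would show $Hom(W,V/W)=0$: a nonzero $\varphi\colon W\to V/W$ has image $I$, a nonzero quotient of the semi-stable bundle $W$, so $\slope(I)\geq\slope(W)$; but $I$ is a nonzero subsheaf of $V/W$, so the slope of its saturation, which is $\geq\slope(I)$, is $<\slope(W)$ by the previous step --- impossible. Since $B$ is an elliptic curve, $\omega_B\cong\mathcal{O}_B$, and Serre duality on $B$ gives
$$\ext^1(V/W,\,W)\;\cong\;H^1\!\big(B,(V/W)^{\vee}\otimes W\big)\;\cong\;H^0\!\big(B,(V/W)\otimes W^{\vee}\big)^{\vee}\;\cong\;Hom(W,V/W)^{\vee}\;=\;0 .$$
Hence $0\to W\to V\to V/W\to 0$ splits and $V\cong W\oplus(V/W)$, contradicting the indecomposability of $V$. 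Therefore $V$ is semi-stable.

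\textbf{Stability when $(\rank V,\deg V)=1$.} Now let $V$ be semi-stable with $\gcd(\rank V,\deg V)=1$ and suppose it is not stable. Then there is a saturated subsheaf $0\neq W\subsetneq V$ with $\slope(W)=\slope(V)$, i.e. $\rank V\cdot\deg W=\deg V\cdot\rank W$; since $\gcd(\rank V,\deg V)=1$ this forces $\rank V\mid\rank W$, impossible because $0<\rank W<\rank V$. Hence $V$ is stable.

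The only delicate point is the opening move: one must invoke, and state carefully, the existence and uniqueness of the maximal destabilising subsheaf $W$ together with its saturatedness and semi-stability --- all standard, but essential, since it is precisely saturatedness that makes $V/W$ locally free and hence legitimises the slope arithmetic and Serre duality used above. Everything else is a one-line computation. (As a variant, $Hom(W,V/W)=0$ may instead be deduced from Lemma~\ref{nonzero map indecomposable vector bundles  slope less than} after decomposing $W$ and $V/W$ into indecomposable summands --- those of $W$ all of slope $\slope(W)$, those of $V/W$ all of strictly smaller slope.)
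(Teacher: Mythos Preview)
Your proof is correct, but the route for semi-stability differs from the paper's. The paper dispatches the first assertion in one line by invoking Lemma~\ref{nonzero map indecomposable vector bundles  slope less than}: given any subbundle $W\subseteq V$, decompose $W=\bigoplus W_i$ into indecomposables; each inclusion $W_i\hookrightarrow V$ is nonzero, so $\slope(W_i)\leq\slope(V)$ by that lemma, and summing gives $\slope(W)\leq\slope(V)$. This in turn rests on Atiyah's computation of $h^0$ for indecomposable bundles (Lemma~\ref{Atiyah lemma 15 and theorem 5}). Your argument instead goes through the Harder--Narasimhan maximal destabilising subsheaf and uses Serre duality with $\omega_B\cong\mathcal{O}_B$ to force the extension to split --- this is the standard direct proof that indecomposable bundles on an elliptic curve are semi-stable, and it is entirely self-contained, not relying on the Atiyah input already built into the paper's appendix. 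The paper's version is shorter \emph{in context} because the needed lemma is already there; yours is more portable. (You note this alternative yourself in your closing parenthetical.) For the stability assertion under $(\rank V,\deg V)=1$, your argument and the paper's are identical.
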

  \begin{proof}
    The first assertion follows easily from Lemma \ref{nonzero map indecomposable vector bundles  slope less than}. Now assume further $(\rank V, \deg V)=1$.  Assume that  $V_0$ is  subbundle of $V$ with $0<\rank V_0<\rank V$. Since $V$ is semi-stable, we have $\slope(V_0)=\frac{\deg V_0}{\rank V_0}\leq \slope(V)=\frac{\deg V}{\rank V}$. Since $(\rank V, \deg V)=1$, the inequality must be strict, i.e.   $\slope(V_0)<\slope(V)$. Therefore $V$ is stable.
 \end{proof}

  \begin{cor} \label{isomorphism of V W}
    Conditions as in Lemma \ref{nonzero map indecomposable vector bundles  slope less than}. If moreover $\rank V=\rank V'=r, \deg V=\deg V'=d$ and $(r,d)=1$, then $\xi$ is an isomorphism.
  \end{cor}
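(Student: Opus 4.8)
The plan is to reduce the statement to the standard fact that a nonzero homomorphism from a stable bundle to a semistable bundle of the same rank and degree is an isomorphism. First I would apply Lemma~\ref{indecomposable is semistable}: since $(\rank V,\deg V)=(r,d)=1$, the bundle $V$ is stable, and $V'$ is at least semistable; write $\mu:=d/r$ for their common slope.

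Next I would prove that $\xi$ is injective by examining the image sheaf $\mathcal{I}:=\mathrm{im}(\xi)\subset V'$. Since $B$ is a smooth curve, $\mathcal{I}$ is torsion-free, and $\mathcal{I}\neq 0$ because $\xi\neq 0$. Let $\mathcal{K}:=\kernel(\xi)\subset V$. If $\mathcal{K}\neq 0$, then $\mathcal{K}$, being a nonzero subsheaf of the torsion-free sheaf $V$, has positive rank, so $\mathcal{I}\cong V/\mathcal{K}$ is a proper nonzero quotient of the stable bundle $V$; hence $\slope(\mathcal{I})>\mu$. On the other hand $\mathcal{I}$ is a nonzero subsheaf of the semistable bundle $V'$, so $\slope(\mathcal{I})\le\mu$, a contradiction. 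Therefore $\mathcal{K}=0$ and $\xi$ is injective, so $\mathcal{I}\cong V$.

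It remains to see that the injection $\xi\colon V\hookrightarrow V'$ is surjective. Since $\rank\mathcal{I}=\rank V=r=\rank V'$, the quotient $V'/\mathcal{I}$ is a torsion sheaf, and its length equals $\deg V'-\deg\mathcal{I}=d-d=0$; hence $V'/\mathcal{I}=0$ and $\xi$ is an isomorphism. One could equally avoid invoking stability and run the argument with semistability alone: then one only gets $\slope(\mathcal{I})=\mu$, i.e.\ $\deg\mathcal{I}=sd/r$ with $s=\rank\mathcal{I}$ an integer, which forces $r\mid s$ and hence $s=r$ by coprimality, after which the same rank-degree count applies. I do not expect a genuine obstacle here; the only point needing a little care is the bookkeeping with torsion subsheaves and quotients on the curve, which is precisely what the hypotheses $\rank V=\rank V'$, $\deg V=\deg V'$ and $(r,d)=1$ are there to control.
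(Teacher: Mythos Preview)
Your proof is correct and follows essentially the same route as the paper: invoke Lemma~\ref{indecomposable is semistable} to obtain stability of $V$ and $V'$, then use the slope inequalities for the image $\xi(V)$ viewed simultaneously as a quotient of $V$ and a subsheaf of $V'$ to force $\xi$ to be an isomorphism. The paper phrases this as a single sandwich $\slope(V)\le\slope(\xi(V))\le\slope(V')$ with equality forcing $V\cong\xi(V)=V'$, while you split it into injectivity then surjectivity, but the substance is the same; your care in distinguishing subsheaves from subbundles and your remark that semistability of $V'$ already suffices are minor refinements rather than a different approach.
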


   \begin{proof}
    By Lemma \ref{indecomposable is semistable}, $V$ and $V'$ are both stable.   Since $\xi(V)$ is a nonzero subbundle of $V'$, we have $\slope(\xi(V)) \leq \slope(V')$ and equality holds if and only if $\xi(V)=V'$. On the other hand, since $\xi(V)$ is a quotient bundle of $V$, we have $\slope(\xi(V)) \geq \slope(V)$ an equality holds if and only if $\xi(V)\cong V$. Since moreover $\slope(V)=\slope(V')$,    we get $\slope(\xi(V))=\slope (V)=\slope(V')$. Hence   $V\cong \xi(V)=V'$, i.e. $\xi$ is an isomorphism. 
\end{proof}

  Let $V$ be a vector bundle over an elliptic  curve $B$, we  define
  $$\maxslope(V):=\max\{\slope(V^i)|V^i  \text{ is a direct summand of } V\}.$$
   Then we have
\begin{lemma} \label{injection  less than maxslope}
  Let $V$ and $V'$ be two   vector bundles over an elliptic curve $B$. If there is an injection $\xi: V\rightarrow V'$, then $\slope(V)\leq \maxslope(V)\leq \maxslope(V')$.
\end{lemma}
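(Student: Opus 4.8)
The plan is to prove the two inequalities $\slope(V)\le\maxslope(V)$ and $\maxslope(V)\le\maxslope(V')$ separately; the first is immediate and only the second uses the hypothesis that $\xi$ is an injection. Indeed, the first inequality holds because $V$ is itself a direct summand of $V$, so $\slope(V)$ is one of the numbers over which $\maxslope(V)$ is defined to be the maximum.

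For the second inequality, the first step is to observe that $\maxslope(V)$ is attained on an \emph{indecomposable} direct summand. Writing $V=\bigoplus_{i=1}^{m}W_i$ with each $W_i$ indecomposable (Atiyah \cite{Ati57}), every direct summand of $V$ is, up to isomorphism, a partial direct sum $\bigoplus_{i\in I}W_i$ by the Krull--Schmidt property, and its slope, being a weighted average of the $\slope(W_i)$, is $\le\max_i\slope(W_i)$; since conversely each $W_i$ is itself a summand, $\maxslope(V)=\max_i\slope(W_i)$. Now fix $i_0$ with $\slope(W_{i_0})=\maxslope(V)$ and compose the inclusion $W_{i_0}\hookrightarrow V$ with $\xi$ to obtain an injection $W_{i_0}\hookrightarrow V'$. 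Writing $V'=\bigoplus_j W'_j$ with $W'_j$ indecomposable and using $W_{i_0}\ne 0$, the composite $W_{i_0}\hookrightarrow V'\twoheadrightarrow W'_j$ with some projection must be nonzero. Since $W_{i_0}$ and $W'_j$ are both indecomposable, Lemma \ref{nonzero map indecomposable vector bundles  slope less than} yields $\slope(W_{i_0})\le\slope(W'_j)\le\maxslope(V')$, and hence $\maxslope(V)=\slope(W_{i_0})\le\maxslope(V')$.

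The step I expect to require the most care is the bookkeeping in the second paragraph: verifying that $\maxslope$ is genuinely realized on an indecomposable summand (a consequence of Krull--Schmidt together with the fact that a slope is a weighted mean of the slopes of the summands), and that the composite $W_{i_0}\hookrightarrow V'\twoheadrightarrow W'_j$ cannot be zero for every $j$ --- the latter being clear since an injective map into $V'=\bigoplus_j W'_j$ with nonzero source cannot be annihilated by all the projections. Everything else is a direct appeal to Lemma \ref{nonzero map indecomposable vector bundles  slope less than}, which itself rests on the semistability of indecomposable bundles over an elliptic curve (Lemma \ref{indecomposable is semistable}). Alternatively one could phrase the second inequality through the maximal destabilizing slope $\mu_{\max}$, using that $\maxslope=\mu_{\max}$ on an elliptic curve and that $\mu_{\max}$ is monotone under subsheaf inclusions; the argument above avoids invoking Harder--Narasimhan theory.
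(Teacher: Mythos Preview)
Your proof is correct and follows essentially the same route as the paper's: pick an indecomposable summand $W_{i_0}$ of $V$ realizing $\maxslope(V)$, compose the inclusion with $\xi$, project to some indecomposable summand of $V'$, and apply Lemma~\ref{nonzero map indecomposable vector bundles  slope less than}. The only difference is that you justify more carefully (via Krull--Schmidt and a weighted-mean argument) why $\maxslope(V)$ is attained on an \emph{indecomposable} summand, whereas the paper simply asserts this.
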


\begin{proof}

 It is obvious that $\slope(V)\leq \maxslope(V)$. Assume $V^1$ to be an indecomposable  direct summand of $V$ (possibly $V^1=V$) such that $\slope(V^1)=\maxslope(V)$. Then  the restriction map
 $\xi|_{V^1}: V^1\rightarrow V'$ is nonzero. Hence there is an indecomposable direct summand  $(V')^1$ of $V'$ such that   the composition map $pr\circ \xi|_{V^1}: V^1\rightarrow V'\rightarrow (V')^1$ is nonzero, where $pr: V'\rightarrow (V')^1$ is the natural projection.   By Lemma \ref{nonzero map indecomposable vector bundles  slope less than}, we have $\slope(V^1)\leq \slope((V')^1)$. Hence $\maxslope(V)=\slope(V^1)\leq \slope((V')^1)\leq \maxslope(V')$.
 \end{proof}

\vspace{3ex}
$\mathbf{Acknowledgements.}$
I am   indebted to  Professor Fabrizio Catanese    for introducing me   this topic and  also for  some helpful suggestions. I am grateful to   Professor Jinxing Cai for his encouragement and some good suggestions. Thanks also goes to Yi Gu for some useful  discussion.


\vspace{3ex}
School of Mathematics Sciences, Beijing University, Yiheyuan Road 5, Haidian District, Beijing 100871,  People's Republic of China

E-mail address: 1201110022@pku.edu.cn

\end{document}